\newtheorem{thm}{Theorem}
\theoremstyle{definition}
\newtheorem{defn}{Definition}[section]
\theoremstyle{lemma}
\newtheorem{lem}{Lemma}[section]
\theoremstyle{proposition}
\newtheorem{prop}{Proposition}[section]
\title{On the Associative Algebra Kernels and Obstruction}
\author{Zelong Li}
\affil{\footnotesize Chair of Higher Geometry and Topology, Faculty of Mechanics and Mathematics, \\ Lomonosov Moscow State University, \\ 1Leninskie Gory, Moscow, 119991, Russia} 
\begin{document}
\maketitle

\begin{abstract}
The theory of abstract kernels in non-trivial extensions for many kinds of algebraical objects, such as groups, rings and graded rings, associative algebras, Lie algebras, restricted Lie algebras, DG-algebras and  DG-Lie algebras, has been widely studied since 1940's. Gerhard Hochschild firstly treats associative algebra as an generic type in the series of kernel problems. He proves the theorem of constructing kernel by presenting many tedious relations that may lost the readers today. In this paper, we shall illustrate the formulation and recast it for Lie algebra(-oid) kernels. We also prove the independence of 3-cocycle in the case of associative algebra. Finally, we use the universal enveloping algebra of Lie algebra to reduce the difficulty of a direct construction for the derivation algebras.
\end{abstract}

\small\tableofcontents 


\section{Introduction}
Hochshild and Mori firstly consider the case for ordinary Lie algebra and one may consult the original ideas in \cite{Hoch54a}, the detailed calculations \cite{Mori53} and some miscellaneous in \cite{Alek00}. Consider a split extension of Lie algebras of $\mathfrak{g}$ by $\mathfrak{h}$:
\[
\begin{tikzcd}
0 \ar{r} &\mathfrak{h} \ar[tail]{r}{\alpha} &\mathfrak{e} \ar[two heads]{r}{\beta} &\mathfrak{g} \ar{r} &0 
\end{tikzcd}
\]
where $\beta$ is an epimorphism such that $\mathfrak{h}=ker \beta$ and $\alpha$ is then a monomorphism. By splitness there is a linear map $\gamma: \mathfrak{g} \rightarrow \mathfrak{e}$ such that $\beta \gamma=id_{\mathfrak{g}}$.

This is equivalent to say that $\mathfrak{e}$ is a semidirect product of these two Lie algebras and has a one-to-one correspondence with a homomorphism:
\begin{align*}
\varphi':\mathfrak{g} &\rightarrow Der(\mathfrak{h}) \\
x &\mapsto ad_e(-)=[\beta^{-1}(x), -]
\end{align*}  
where $e=\beta^{-1}(x)\in \mathfrak{e}$. Now $\mathfrak{e}=\mathfrak{h} \rtimes_{\varphi'} \mathfrak{g}$.

Passing through the adjoint map, the image of $\varphi'$ consists of elements called the \textbf{inner derivation}, so $\varphi'$ can be extended to a map 
\[
\varphi: \mathfrak{g} \rightarrow Der(\mathfrak{h})/ad(\mathfrak{h})      
\]
The latter quotient is called the \textbf{outer derivation algebra}. Given any homomorphism $\varphi$ in above sense, we say $\mathfrak{g}$ and $\mathfrak{h}$ are \textbf{coupled} by $\varphi$. The pair $(h, \varphi)$ is said to be a \textbf{$\mathfrak{g}$-kernel} and \textbf{extendible} if it derived from a split extension.

Not every $\mathfrak{g}$-kernel is extendible. Given an extension we should find a proper map from $\mathfrak{g}$ to $Der(\mathfrak{h})$.  

It turns out that the transversal map $\gamma$ determines a ``covering map" $\sigma$ whence defines a coupling. One can summarize the details through the following diagram:
\[
\begin{tikzcd}
  &0 \ar{r} &ad(\mathfrak{h}) \ar[tail]{r}{j}  &Der(\mathfrak{h}) \ar[two heads]{r}{\sharp} &Out(\mathfrak{h}) \ar{r} &0 \\
	&0 \ar{r} &\mathfrak{h} \ar[tail]{r}{\alpha} \ar{u}{ad} &\mathfrak{e} \ar[two heads]{r}{\beta} \ar{u}{ad} &\mathfrak{g} \ar{r} \ar[bend left, yshift=0.6ex]{l}{\gamma} \ar[dashed]{u}{\varphi} \ar[swap, dashed]{ul}{\exists \sigma^\gamma} &0 \\
	&&&&& \mathfrak{g}\wedge \mathfrak{g} \ar[swap, dashed]{uul}{R^\varphi} \ar[bend left, dashed]{ulll}{h^\gamma}
\end{tikzcd}
\]
and define
\begin{align*}
\sigma^\gamma_x(l)&=\alpha^{-1}\big(ad_{\beta^{-1}(x)}(\alpha(l))\big)\\
                  &=\alpha^{-1}\big([\beta^{-1}(x),\alpha(l)]\big)\in \mathfrak{h},
\end{align*}
for all $x\in \mathfrak{g}$ and $l\in \mathfrak{h}$. 

There is a well-known criterion for extensibility:
\begin{center}
A $\mathfrak{g}$-kernel $(\mathfrak{h},\varphi)$ is extendible $\Leftrightarrow $ a cohomology class in $H^3(\mathfrak{g}, Zh)$ derived from $\varphi$ vanishes
\end{center}

In general, a three-dimensional cohomology can arise out of the context of extension. The following pictures display the whole steps:
\[
\begin{tikzcd}
   &&& ad(\mathfrak{h}) \ar[hook]{d}   \\
	  0 \ar{r} &Z\mathfrak{h} \ar{r}{i} &L \ar{r}{ad} &Der(\mathfrak{h}) \ar{r}{\sharp} &Out(\mathfrak{h}) \ar{r} &0 \\
	 &&& \mathfrak{g} \ar[dashed]{u}{\sigma} 
	       \ar[swap]{ur}{\varphi}    
\end{tikzcd}
\]

\[
\begin{tikzcd}
   &&& ad(\mathfrak{h}) \ar[hook]{d} \ar[dashed, bend left]{dr}{\sharp} \\
	 0 \ar{r} &Z\mathfrak{h} \ar{r}{i} &\mathfrak{h} \ar{r} \ar[two heads]{ur}{ad} &Der(\mathfrak{h}) \ar{r} &Out(\mathfrak{h}) \ar{r} &0  \\
	 &&& \mathfrak{g}\wedge \mathfrak{g} \ar{u}
	                \ar[dashed]{ul}{H}
									\ar[bend right, crossing over, swap, near end, shift right=2.8ex]{uu}{R^\sigma}
									\ar[bend right, swap, yshift=1ex]{ur}{R^\varphi}
\end{tikzcd}
\]

where $R^\sigma(x_1 \wedge x_2):=[\sigma_{x_1},\sigma_{x_2}]-\sigma_{[x_1,x_2]}$ is nonzero and $\sharp \circ R^\sigma= R^\varphi=0$

\[
\begin{tikzcd}
   0 \ar{r} &Z\mathfrak{h} \ar{r}{i} &L \ar{r}{ad} &Der(\mathfrak{h}) \ar{r}{\sharp} &Out(\mathfrak{h}) \ar{r} &0 \\
   &&& \mathfrak{g} \wedge \mathfrak{g} \wedge \mathfrak{g} \ar{ull}{\Delta^\sigma H}
	                         \ar[swap]{u}{\Delta^\sigma R^\sigma}
\end{tikzcd}
\]

where $\Delta: Alt^n(\mathfrak{g},\mathfrak{h}) \rightarrow Alt^{n+1}(\mathfrak{g},\mathfrak{h})$ happen to be a ``symbolic" differential, and  
\begin{align*}
f(\sigma, H)=f(x_1 \wedge x_2 \wedge x_3)&:=\Delta^\sigma H(x_1 \wedge x_2 \wedge x_3)
\end{align*}

Moreover, $\Delta^{\sigma} R^{\sigma}=0$. 

Another more generic pattern of this kernel problem reduces to associative algebras. In \cite{Hoch46} Hochschild introduces the laborious term ``bimultiplication algebra" replacing the position of Lie-wise derivation algebra, the inner and outer ones. Part \textbf{2} and \textbf{3} provides all basic definitions and derives the target cocycle. In part \textbf{4} we follow Mackenzie to present a Maurer-Cartan form in the meaning of associative algebra so that one can see the exclusive dependence of the cocycle. For completeness, we refer to \textbf{Appendix C} in checking the classical criterion for zero obstruction and extension. This follows on Mackenzie's work in a contemporary pattern of formulation for transitive Lie algebroid in \cite{Mackz05}. To overcome the obstacle between the associative algebra and Lie algebra, in part \textbf{5} we shall build a fundamental bridge between them. In part \textbf{6} we state the main theorems and shortly sketch their proof. We write proofs of two structure theorems, especially the simplified one, in \textbf{Appendix D} and \textbf{E}. Finally, we develop their Lie-counterpart in \textbf{8}, under-organized in \cite{Hoch54a}. Note that the two consecutive \textbf{Appendix A} and \textbf{B} are real appendices in this paper, where we vainly offer the preliminary, if not being exhaustive, knowledge of classical Hochschild cohomology; see also \cite{CE56}. 

I am very indebted to Professor A. C. Mishchenko and Professor V. M. Manuilov for their constant advice to the modification of this paper.


\section{Bimultiplication Algebra and Coupling}

\begin{defn}
A {\em bimultiplication} is pair of linear mappings $(u, v)$ of $K$ into itself, satisfying the following conditions:  
\begin{align*}
u(k_1+k_2)=u(k_1)+u(k_2), &\quad v(k_1+k_2)=v(k_1)+v(k_2) \\
u(\alpha k)=\alpha u(k), &\quad v(\alpha k)=\alpha v(k) \\
\end{align*}
and
\begin{align*}
	   k_1 u (k_2) &= v (k_1) k_2 \\ 
	   u (k_1 k_2) &= u (k_1) k_2 \\ 
	   v (k_1 k_2) &= k_1 v (k_2),
\end{align*}
for any $k_1,k_2$ in $K$.
\end{defn}

If we write $\sigma=(u, v)$, then any $\sigma$ is in
$$Hom_{\mathbb{F}}(K, K) \oplus Hom_{\mathbb{F}}(K,K)^{op}$$ 
where 
$$k \mapsto \sigma k  \quad \text{and} \quad 
k \mapsto \sigma^* k:=k\sigma $$ 
We denote the family of the pairs of endomorphism of $K$ by
$$Mul(K):=\big(End(K), End(K)^{op}\big).$$ 

\begin{defn}
The operations of addition, multiplication and scalar multiplication between the pairs of endomorphisms are defined as follows:
\begin{align*}
      (u_1,v_1)+(u_2,v_2)&=(u_1+u_2,v_1+v_2)  \\
      (u_1,v_1)(u_2,v_2)&=(u_1u_2,v_2v_1)    \\
      \alpha(u,v)&=(\alpha u,\alpha v), \alpha \in R
\end{align*}
\end{defn}

In this way, the family $Mul(K)$ forms an unital associative $\mathbb{F}$-algebra, called the {\em bimultiplication algebra} of $K$. Its identity element is $(1_u, 1_v)$.

\begin{defn}
For any $k_0$ in $K$, the pair $(u_{k_0}, v_{k_0})\in Mul(K)$ of endomorphisms of $K$ is called an {\em inner bimultiplication produced by $k_0$} if it satisfies the following conditions: 
\begin{align*}
	u_{k_0} (k) &=k_0 k \\
	v_{k_0} (k) &=k k_0, 
\end{align*} 
for all $k$ in $K$. The family of inner bimultiplications produced by an element of $K$ is called the {\em inner bimultiplication algebra}. Denote it by $Inn(K)$.
\end{defn}

Let us write $(u_{k_0}, v_{k_0})=(k_0^1, k_0^2)$, which treats maps as elements for the computation purpose.

\begin{lem}
$Inn(K)$ becomes a subalgebra of $Mul(K)$. Moreover, $Inn(K)\vartriangleleft Mul(K)$.
\end{lem}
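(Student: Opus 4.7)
The plan is to verify the two assertions by reducing everything to computations at the level of representative elements $k_0 \in K$, and then to show that the subalgebra and ideal conditions follow directly from the three bimultiplication axioms $k_1 u(k_2) = v(k_1) k_2$, $u(k_1 k_2) = u(k_1) k_2$ and $v(k_1 k_2) = k_1 v(k_2)$. In each case, the key step is to identify the result as an inner bimultiplication \emph{produced by some explicitly exhibited element}, so that closure under the relevant operation becomes transparent.

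For the subalgebra claim I first check closure under the three operations defining the $\mathbb{F}$-algebra structure. Additivity and scalar multiplication are immediate: $(u_{k_0}, v_{k_0}) + (u_{k_0'}, v_{k_0'}) = (u_{k_0+k_0'}, v_{k_0+k_0'})$ and $\alpha(u_{k_0}, v_{k_0}) = (u_{\alpha k_0}, v_{\alpha k_0})$, since multiplication in $K$ is bilinear. For multiplication I unfold $(u_{k_0}, v_{k_0})(u_{k_0'}, v_{k_0'}) = (u_{k_0} u_{k_0'},\, v_{k_0'} v_{k_0})$ according to the definition of multiplication in $Mul(K)$, and evaluating on $k$ yields $k_0(k_0' k) = (k_0 k_0') k$ on the first component and $(k k_0) k_0' = k(k_0 k_0')$ on the second. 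Hence the product is $(u_{k_0 k_0'}, v_{k_0 k_0'}) \in Inn(K)$. Thus $Inn(K)$ is an $\mathbb{F}$-subalgebra and, as a byproduct, the map $k_0 \mapsto (u_{k_0}, v_{k_0})$ is an algebra homomorphism from $K$ onto $Inn(K)$.

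For the ideal claim I take an arbitrary $(u,v) \in Mul(K)$ and an inner element $(u_{k_0}, v_{k_0})$ and compute both products, using the bimultiplication axioms to push the outer map through to the representing element. On the left product $(u,v)(u_{k_0},v_{k_0}) = (u u_{k_0},\, v_{k_0} v)$, the axiom $u(k_1 k_2) = u(k_1) k_2$ gives $u u_{k_0}(k) = u(k_0) k$, and the compatibility $k\, u(k_0) = v(k) k_0$ gives $v_{k_0} v(k) = v(k) k_0 = k u(k_0)$. So $(u,v)(u_{k_0},v_{k_0}) = (u_{u(k_0)}, v_{u(k_0)})$. A symmetric computation, this time using $v(k_1 k_2) = k_1 v(k_2)$ together with $k_0 u(k) = v(k_0) k$, yields $(u_{k_0},v_{k_0})(u,v) = (u_{v(k_0)}, v_{v(k_0)})$. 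Both products lie in $Inn(K)$, which establishes $Inn(K) \vartriangleleft Mul(K)$.

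The main obstacle, if any, is simply the bookkeeping involved in tracking the order-reversing convention on the $v$-component in the product formula $(u_1,v_1)(u_2,v_2) = (u_1 u_2, v_2 v_1)$; once one is careful about this, the ideal calculation is symmetric in $u$ and $v$ and uses each bimultiplication axiom exactly once. A small but pleasant corollary of the computation is the pair of identities $(u,v) \cdot (u_{k_0}, v_{k_0}) = (u_{u(k_0)}, v_{u(k_0)})$ and $(u_{k_0}, v_{k_0}) \cdot (u,v) = (u_{v(k_0)}, v_{v(k_0)})$, which express the $Mul(K)$-bimodule structure of $Inn(K)$ in terms of how $(u,v)$ acts on the representing element $k_0$.
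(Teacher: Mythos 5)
Your proof is correct and follows essentially the same route as the paper: a direct element-level computation with the bimultiplication axioms, identifying each product as the inner bimultiplication produced by an explicit element ($u(k_0)$ on one side, $v(k_0)$ on the other, $k_0k_0'$ for inner times inner). In fact you are more thorough than the paper, which only records the one-sided computation $(u,v)(u_{k_0},v_{k_0})=(u_{u(k_0)},v_{u(k_0)})$ and leaves the right-hand multiplication and the subalgebra closure implicit.
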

\begin{proof}
In fact,
 \begin{align*}
 (u,v)(k_0^1,k_0^2)(K)&=(u k_0^1,k_0^2 v)(K) \\
&=(u(k_0 k),v(k) k_0)   \\
&=(u(k_0)k,k u(k_0))    \\
&=(u(k_0)^1,u(k_0)^2)(k)\in Inn(K)
 \end{align*}
\end{proof}

\begin{defn}
Two bimultiplications $(u_1, v_1)$ and $(u_2, v_2)$ are called {\em permutable} if $v_2 u_1(k)=u_1 v_2(k)$ and $v_1 u_2(k)=u_2 v_1(k)$ for any $k$ in $K$.
\end{defn}

A bimultiplication $\sigma$ is said to be \textbf{self-permutable} if $\sigma(k\sigma)=\sigma k (\sigma)$. Indeed, every inner bimultiplication of form $(k_0^1, k_0^2)$ is self-permutable. In fact, we have $k_0^1 k_0^2=k_0^2 k_0^1$. In fact,we have $k_0^1 k_0^2 (k)=k_0^1 (k k_0)=k_0 k k_0=(k_0 k) k_0=k_0^2 (k_0 k)=k_0^2 k_0^1 (k)$. The set of all self-permutatble elements needs not to be an subalgebra of $Mul(K)$, nor to be a ring. One should refer the these definitions to [Mac58].

\begin{defn}
The quotient algebra $Out(K):=Mul(K)/Inn(K)$ is called the {\em outer bimultiplications algebra} of $K$.
\end{defn}

The {\em biannihilator} of $K$ is defined to be 
$$AnniK:=\{k\in K|k K=(0)=K k\} \vartriangleleft K,$$

The map 
\begin{align*}
\epsilon: K &\rightarrow Mul(K) \\
k_0 &\mapsto (u_{k_0},v_{k_0})
\end{align*}
is an algebra homomorphism. The image subset $\epsilon(K)$ consists of elements
$$\{(u_{k_0},v_{k_0})|u_{k_0}(k)=k_0 k, v_{k_0}(k) =k k_0, \forall k_0, k \in K \} $$

\begin{prop}
$Inn(K)=im\epsilon, Anni=ker\epsilon$ and $Out(K)=coker \epsilon$ such that the following sequence is exact.
\[
\begin{tikzcd}
   &&&& im\epsilon \ar[hook]{d}   \\
	 & 0 \ar{r} &ker\epsilon \ar{r} &K \ar{r}{\epsilon} &Mul(K) \ar{r}{\iota} &coker\epsilon \ar{r} &0 \\
\end{tikzcd}
\]
\end{prop}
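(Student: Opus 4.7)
The plan is to verify the four identifications in turn; none is deep, but each isolates a small piece of content worth separating. The equality $im\,\epsilon = Inn(K)$ is essentially tautological from the definitions: $\epsilon(k_0)=(u_{k_0},v_{k_0})$ is precisely the inner bimultiplication produced by $k_0$, so as $k_0$ ranges over $K$ the image traces out $Inn(K)$. To make this step honest I would first check that $(u_{k_0},v_{k_0})$ actually satisfies the five bimultiplication axioms and that $\epsilon$ is an algebra homomorphism with respect to the twisted product $(u_1,v_1)(u_2,v_2)=(u_1 u_2,\, v_2 v_1)$; each verification reduces to the associativity of $K$, for example $u_{k_1 k_2}(k)=(k_1 k_2)k=k_1(k_2 k)=(u_{k_1}u_{k_2})(k)$ while $v_{k_1 k_2}(k)=k(k_1 k_2)=(kk_1)k_2=(v_{k_2}v_{k_1})(k)$, which also explains the reversal in the second slot.

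Second, $\ker\epsilon = Anni\,K$ follows immediately, since $k_0\in\ker\epsilon$ iff $(u_{k_0},v_{k_0})=(0,0)$, iff $k_0 k=0=k k_0$ for every $k\in K$, which is exactly the defining condition for the biannihilator. Third, the preceding lemma already established $Inn(K)\vartriangleleft Mul(K)$, so the quotient algebra $Mul(K)/Inn(K)$ is well defined; together with $im\,\epsilon=Inn(K)$ this gives $coker\,\epsilon = Mul(K)/Inn(K) = Out(K)$, and the map $\iota$ in the diagram is just the canonical projection onto this quotient.

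Finally, exactness of the displayed four-term sequence is formal once the previous three steps are in hand: injectivity of the inclusion $\ker\epsilon \hookrightarrow K$ and surjectivity of $\iota$ are built in, exactness at $K$ is the kernel identification, and exactness at $Mul(K)$ reads $\ker\iota = im\,\epsilon$, which holds by construction of the quotient. The only real friction point is the first step, because the product on $Mul(K)$ has the asymmetric shape $(u_1 u_2,\, v_2 v_1)$; carefully tracking the opposite convention makes the multiplicativity of $\epsilon$ drop out of associativity in $K$, and beyond this bookkeeping I do not anticipate any serious obstacle.
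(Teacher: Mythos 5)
Your proposal is correct and follows the same (essentially definitional) route the paper relies on: the paper states this proposition without a separate proof, leaning on the preceding observations that $\epsilon$ is an algebra homomorphism, that its image is exactly the set of inner bimultiplications, and on the lemma that $Inn(K)\vartriangleleft Mul(K)$ — precisely the points you verify, including the correct handling of the reversed second slot in the product $(u_1,v_1)(u_2,v_2)=(u_1u_2,\,v_2v_1)$. Nothing is missing; your write-up simply makes explicit the bookkeeping the paper leaves to the reader.
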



\section{Connections and Twisted Module}

\begin{defn}
Let $A$ and $K$ be two associative $R$-algebras. An \textbf{A-connection} on $K$ is a linear map $\mu: A \rightarrow Mul(K)$. 
\end{defn}

\begin{defn}
A connection is said to be \textbf{flat} if it becomes a homomorphism of algebra. 
\end{defn}

\begin{defn}
A connection is said to be \textbf{regular} if the image $\mu(A)$ consist of permutable elements. 
\end{defn}

\begin{defn}
For any algebra $K$ and any $A$-connection $\mu$ on $K$, the pair $(K, \mu)$ is called a \textbf{representation} of $A$or an \textbf{$A$-module} provided the flatness of $\mu$.
\end{defn}

In general, a connection may lose its flatness, hence there is no module structure on $K$ (somehow being``hindered"). 

\begin{defn}
A \textbf{coupling} of $A$ is a homomorphism of algebras $\xi:A \rightarrow Out(K)$. We also say $A$ and $K$ are \textbf{coupled} by $\xi$. In this case, the pair $(K, \xi)$ is called an \textbf{A-kernel}
\end{defn}

\begin{defn}
Let $\natural$ be the natural projection. A regular $A$-connection $\mu$ such that the following diagram commutes
\[
\begin{tikzcd}
 &Mul(K) \ar{r}{\natural} &Out(K) \\
 &A \ar[dashed]{u}{\mu}  
    \ar[swap]{ur}{\xi}
\end{tikzcd}
\]
is called an \textbf{(associative) bimultiplication law} that cover $\xi$.
\end{defn} 

Note that $\xi(A)$ consists of permutable elements if and only if $\mu(A)$ does, therefore the regularity of $\xi$ follows. 

For each $a\in A$, we shall write the element in $\xi(A)$ by $\xi_a:=([u]_a, [v]_a)$, the pair of quotient endomorphism induced by the element in $\mu(A)$. We also write $\mu_a=(u_a, v_a)$ for indicating the potential $A$-actions on $K$:
\begin{align*}
&u_a: k\mapsto a\cdot_\mu k  \\
&v_a: k\mapsto k\cdot_\mu a
\end{align*}

On the other hand, mimicking Hochscild cohomology, we may define the \textbf{twisted module} $Hom(A^{\otimes^n}, K)=\Omega^n(A, K)$ for a representation $(K, \mu)$.

\begin{defn} 
A ``symbolic" differential $\Delta^\mu : \Omega^n(A,K) \rightarrow \Omega^{n+1}(A,K)$ induced by the connection $\mu$ is given by
\begin{align*}
\Delta^\mu(f)&(a_1\otimes \cdots\otimes a_{n+1})
=u_{a_1}f(a_2\otimes \cdots \otimes a_{n+1})  \\
&+\sum_{i=1}^n (-1)^i f(a_1 \otimes \cdots \otimes a_i a_{i+1}\otimes \cdots \otimes a_{n+1})+(-1)^{n+1}v_{a_{n+1}}f(a_1\otimes \cdots \otimes a_n)
\end{align*} 
where $f\in \Omega^n(A,K)$ and $\mu_a=(u_a, v_a)$ for any $a\in A$.   
\end{defn}

Note that $\Delta^\mu \Delta^\mu=0$ fails as one drops flatness generally.


\section{The Emergence of 3-Cocycles}

A non-flat bimultiplication law $\mu$ ensues a bilinear map $R^\mu : A\otimes A \rightarrow Mul(K)$ where
\[ 
R^\mu(a_1\otimes a_2)=\mu(a_1)\mu(a_2)-\mu(a_1a_2)
\] for any $a_1, a_2\in A$.

The difference $\mu(\cdot)\mu(\cdot)-\mu(\cdot \cdot)$ is non-zero and it lies in $Inn(K)$. Indeed, as the composition $\xi$ is a homomorphism, we have $\xi(a_1)\xi(a_2)-\xi(a_1a_2)=\natural\circ \mu(a_1) \iota \circ \mu(a_2)-\natural \circ \mu(a_1 a_2)=\natural \circ(\mu(a_1)\mu(a_2)-\mu(a_1 a_2))=\natural \circ R^\mu(a_1\otimes a_2)=0$. Therefore, $R^\mu$ induces a bilinear map $A\otimes A \rightarrow Inn(K)$ and we still denote it by $R^\mu$. It is called the \textbf{curvature} with respect to $\mu$.

\begin{lem} 
For every bimultiplication law $\mu$, there are bilinear mappings $h: A\otimes A \rightarrow K$ such that 
\[
\epsilon \circ h=R^\mu
\]
In other word, each $h$ naturally lifts $R^\mu$. 
\end{lem}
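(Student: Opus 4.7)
The plan is to lift $R^\mu$ across the surjection $\epsilon\colon K \to Inn(K)$ by means of a vector-space splitting. The proposition at the end of Section 2 identifies $Inn(K)$ with $\mathrm{im}\,\epsilon$ and $Anni\,K$ with $\ker\epsilon$, giving an exact sequence of $\mathbb{F}$-vector spaces
\[
0 \longrightarrow Anni\,K \longrightarrow K \xrightarrow{\;\epsilon\;} Inn(K) \longrightarrow 0.
\]
Since every surjection of $\mathbb{F}$-vector spaces splits, I would choose an $\mathbb{F}$-linear section $s\colon Inn(K) \to K$ with $\epsilon\circ s = \mathrm{id}_{Inn(K)}$.

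Next I would record the two facts about $R^\mu$ already in hand: it is $\mathbb{F}$-linear on $A\otimes A$, because it is bilinear in its two slots by construction; and, by the computation immediately preceding the lemma statement, it takes values in $Inn(K)$ rather than merely in $Mul(K)$. The candidate lift is then simply
\[
h \;:=\; s \circ R^\mu \colon\; A\otimes A \longrightarrow K,
\]
which is automatically $\mathbb{F}$-linear, hence bilinear as a map on $A\times A$, and satisfies $\epsilon\circ h = \epsilon\circ s\circ R^\mu = R^\mu$.

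There is no substantive obstacle here: the whole content is the splitting of a short exact sequence of vector spaces. What is worth emphasising, however, is that $s$ is merely a vector-space section and not an algebra map, so the resulting $h$ is bilinear but neither canonical nor compatible with multiplication. Any two such lifts differ by a bilinear map $A\otimes A \to Anni\,K = \ker\epsilon$. I expect this indeterminacy to be the mechanism by which, in the following section, the cohomology class of the 3-cochain built from $(\mu,h)$ is shown to depend only on the coupling $\xi$ and not on the choices of the bimultiplication law $\mu$ covering $\xi$ or the lift $h$ of $R^\mu$.
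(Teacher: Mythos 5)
Your proof is correct and follows essentially the same route as the paper: both lift $R^\mu$ through the surjection $\epsilon\colon K \to Inn(K)$, using that the computation before the lemma shows $R^\mu$ takes values in $Inn(K)$. Your version is in fact slightly more careful, since choosing a fixed $\mathbb{F}$-linear section $s$ and setting $h=s\circ R^\mu$ guarantees the bilinearity of $h$, a point the paper's pointwise choice of preimages leaves implicit.
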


\begin{proof}
Recall that $\epsilon$ is surjective and the element $R^\mu(\cdot, \cdot)$ is an inner bimultiplication produced by some $k=h(\cdot,\cdot)$ in the preimage.
\[ 
\begin{tikzcd}
&Inn(K) \subseteq Mul(K) \\
K \ar{ur}{\epsilon} &A\otimes A \ar[dashed]{l}{h} \ar[swap]{u}{R^\mu} 
\end{tikzcd}
\]
\end{proof}

\begin{defn}
The map $h: A\otimes A \rightarrow K$ is called a \textbf{hindrance} of the coupling.
\end{defn}
In other words, it hinders a (non-flat) connection that covers the coupling.

Let us concretely compute the curvature by assigning an element $k$.
\begin{align*}
\mu(a_2)\mu(a_3)(k)
        &=(u,v)(u^\prime,v^\prime)(k)=(uu^\prime,v^\prime v)(k)=\bigl(u(u^\prime(k)), v^\prime(v(k))\bigr)      \\
				&=\bigl(a_2\cdot(a_3\cdot k), (k\cdot a_2)\cdot a_3 \bigr)    \\ 
\mu(a_2a_3)(k) 
        &=(u^{\prime\prime},v^{\prime\prime})(k)=(u^{\prime\prime}(k), v^{\prime\prime}(k))=\bigl((a_2a_3)\cdot k,\ k\cdot(a_2 a_3)\bigr)
\end{align*}

So, the first coordinate in the difference of above two identities is 
\[ a_2\cdot(a_3\cdot k)-(a_2a_3)\cdot k \]
and the second one is
\[(k\cdot a_2)\cdot a_3-\ k\cdot(a_2a_3) \]

Since such an inner multiplication is produced by a hindrance, we have 
\[ 
\mu(\cdot)\mu(\cdot)-\mu(\cdot\cdot)
=(uu^\prime-u^{\prime\prime},v^\prime v-v^{\prime\prime})
=\epsilon\circ h(\cdot, \cdot)
=\bigl(h(\cdot,\cdot)^1, h(\cdot,\cdot)^2\bigr)
\]

More precisely, by applying a $k$, we have
\begin{align*}
h(a_2\otimes a_3)^1(k)&=h(a_2\otimes a_3)k   \\
h(a_2\otimes a_3)^2(k)&=kh(a_2\otimes a_3)
\end{align*}

Comparing these coordinates, we get two important identities: 
\begin{align*}
a_2\cdot(a_3\cdot k)-(a_2a_3)\cdot k &=h(a_2\otimes a_3)k   
    \tag{3.1}      \\
(k\cdot a_2)\cdot a_3-k\cdot(a_2a_3) &=kh(a_2\otimes a_3)
    \tag{3.2}                  
\end{align*}

We now deduce some characteristic identities involving $h$ in detail when it takes triple variables in $A$: 
For any $k\in K, a_r, a_s, a_t\in A$, we firstly note that
\begin{align*}
a_r\cdot((a_s a_t)\cdot k)-(a_r a_s a_t)\cdot k &=h(a_r\otimes a_s a_t)k 
   \tag{3.1$a$}   \\
(a_r a_s)\cdot(a_t\cdot k)-(a_r a_s a_t)\cdot k &=h(a_ra_s\otimes a_t)k 
   \tag{3.1$b$}       
\end{align*}
by viewing $a_r a_s$ as an integral symbol and then substituting it into (3.1) in two different ways. 

And we have
\[
a_r\cdot(a_s\cdot(a_t\cdot k))-a_ra_s\cdot(a_t\cdot k)=h(a_r\otimes a_s)(a_t\cdot k)  
   \tag{3.1$c$}
\]
by viewing $a_r\cdot k$ as an integral symbol and then substituting it into (3.1) again.

Secondly, with (3.1) and then (3.1$a$), we have
\begin{align*}
a_1\cdot\bigl(a_2\cdot(a_3\cdot k)\bigr) &=a_1\cdot\bigl((a_2a_3)\cdot k+h(a_2\otimes a_3)k \bigr)   \\
&=a_1\cdot\bigl((a_2a_3)\cdot k \bigr)+(a_1\cdot h(a_2\otimes a_3))\cdot k   \\
&=(a_1a_2a_3)\cdot k+h(a_1\otimes a_2a_3)k+(a_1\cdot h(a_2\otimes a_3))k
\end{align*}

On the other hand, by (2.1$c$) and then by (2.1$b$), we have
\begin{align*}
a_1\cdot\bigl(a_2\cdot(a_3\cdot k)\bigr) &=(a_1a_2)\cdot(a_3\cdot k)+h\{a_1\otimes a_2\}(a_3\cdot k)   \\
&=(a_1a_2a_3)\cdot k+h(a_1a_2\otimes a_3)k+(h(a_1\otimes a_2)\cdot a_3)k
\end{align*}

We define a trilinear cochain with respect to $\mu_a=(u_a, v_a)$ and $h$
\[
f(a_1\otimes a_2\otimes a_3):=u_{a_1} h(a_2\otimes a_3)-h(a_1a_2\otimes a_3)+h(a_1\otimes a_2a_3)-v_{a_3}h(a_1\otimes a_2) 
\]

Multiply by $k$ on the right on each side of this formula, 
\[
f(a_1\otimes a_2\otimes a_3)\cdot k=a_1\cdot\bigl(a_2\cdot(a_3\cdot k)\bigr)-a_1\cdot\bigl(a_2\cdot(a_3\cdot k)\bigr)=0
\]

Likewise, starting from (3.2), we can compute the coboundary formula by multiplying $k$ on the left and get $k\cdot f(a_1\otimes a_2\otimes a_3)=0$. These two identities imply $f$ takes its value in the biannihilator of $K$.

\begin{lem}
The coupling defines a structure of $A$-$A$-bimodule on the biannihilator of $K$, being independent from the choice of $\mu$.
\end{lem}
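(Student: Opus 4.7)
The plan is to fix any bimultiplication law $\mu$ covering $\xi$, define left and right actions of $A$ on $AnniK$ by $a\cdot z:=u_a(z)$ and $z\cdot a:=v_a(z)$, where $\mu_a=(u_a,v_a)$, and then verify three things in turn: that these operations stay inside $AnniK$, that they satisfy the $A$-$A$-bimodule axioms, and that they do not depend on the particular lift $\mu$ of $\xi$.

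Closure is immediate from the bimultiplication identities of Definition~2.1. For $z\in AnniK$ and $k\in K$, the relation $u(k_1k_2)=u(k_1)k_2$ with $k_1=z$ gives $u_a(z)\,k=u_a(zk)=0$, while $k_1u(k_2)=v(k_1)k_2$ with $k_2=z$ gives $k\,u_a(z)=v_a(k)\,z=0$ because $Kz=0$; hence $u_a(z)\in AnniK$. Two analogous applications (one using $v(k_1k_2)=k_1v(k_2)$, the other using $k_1u(k_2)=v(k_1)k_2$ with $k_1=z$) yield $v_a(z)\in AnniK$ as well.

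The bimodule axioms then fall out of identities (3.1) and (3.2) together with regularity. Specialising (3.1) to $k=z\in AnniK$ kills the right-hand side $h(a_1\otimes a_2)\,z$, since $h(a_1\otimes a_2)\in K$ is right-annihilated by $z$; this is precisely the left-module law $a_1\cdot(a_2\cdot z)=(a_1a_2)\cdot z$. The right-module law follows the same way from (3.2). The middle-associativity $(a_1\cdot z)\cdot a_2=a_1\cdot(z\cdot a_2)$ is just regularity of $\mu$ applied to $z$, namely $v_{a_2}u_{a_1}(z)=u_{a_1}v_{a_2}(z)$.

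For independence, let $\mu'$ be any other bimultiplication law covering $\xi$. Then $\natural\circ\mu=\xi=\natural\circ\mu'$, so $\mu(a)-\mu'(a)\in\ker\natural=Inn(K)=\mathrm{im}\,\epsilon$ for every $a\in A$, and may be written as $\epsilon(k_a)=(k_a^1,k_a^2)$ for some $k_a\in K$. Applied to $z\in AnniK=\ker\epsilon$, this difference gives $(u_a-u'_a)(z)=k_a z=0$ and $(v_a-v'_a)(z)=zk_a=0$, so the two actions agree. The conceptual heart of the argument---and the one step where one has to pause---is exactly this: the ambiguity between lifts of $\xi$ is measured by $\mathrm{im}\,\epsilon$, while $AnniK=\ker\epsilon$, so the two cancel precisely on the biannihilator. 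Once this duality is in mind the statement is forced by the exact sequence in the Proposition of Section~2.
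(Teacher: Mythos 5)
Your proof is correct and takes essentially the same route as the paper: specializing the hindrance identities (3.1)--(3.2) to elements of $AnniK$ yields the left and right module laws, permutability of $\mu$ gives the middle associativity, and independence follows because two laws covering $\xi$ differ by an inner bimultiplication $\epsilon(k_a)$, which acts as zero on the biannihilator. Your explicit closure check that $u_a(z),v_a(z)\in AnniK$ is a small addition the paper leaves implicit, but it is the same argument.
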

\begin{proof}
Let $AnniK$ be the biannihilator of $K$.
Let $n\in AnniK$, we compute $a_1(a_2\cdot n)-(a_1a_2)\cdot n=h(a_1\otimes a_2)\cdot n=0$. Similarly, we have $(n\cdot a_2)\cdot a_3-\ n\cdot(a_2a_3)=0$, either. This proves that $AnniK$ is both left and right $A$-module.

Furthermore, we need $(a_1\cdot n)\cdot a_2=a_1\cdot(n\cdot a_2)$ so that $AnniK$ becomes a bimodule. But this is just from the formula $v_2u_1(n)-u_1v_2(n)=0$, by the permutability of $\mu$. 
\end{proof}

The coupling $\xi$ induces a representation on $AnniK$ not depending one the choice of $\mu$. Indeed, let $\mu:A\rightarrow Mul(K)$ be a covering of $\xi$, then $\mu_{AnniK}:A\rightarrow Mul(K)$ defines the mapping $\mu_{AnniK}(a): n\mapsto n$ instead of $k\mapsto k$. Now if $\mu|_{AnniK}, \mu'|_{AnniK}$ are two restricted coverings of $\varphi$, then $\mu'|_{AnniK}-\mu|_{AnniK}(a)$ is an element in $Inn(K)$, as well as in $Inn(AnniK)$. Such a inner bimultiplication is produced by some elements $n\in {AnniK}$ and therefore vanishes on $AnniK$. This concludes that $AnniK$ does not rely on the choice of $\mu$.

No cochain complex follows for the twisted module $\Omega(A, K)$. However, the restricted map $\mu|_{AnniK}: A \rightarrow Mul(AnniK)$ is a homomorphism, thus it is a flat connection. We change the notation of it by $\rho^\xi$, indicating its exclusive dependence on the choice of $\xi$.  

\begin{defn}
the {\em  central (or annihilatoral) $A$-connection} on $AnniK$. Due to the flatness the pair $(AnniK^\xi, \rho^\xi)$ is the {\em central (or annihilatoral) representation} of $A$, and the twisted module $Hom(A^{\otimes^n}, AnniK)$ becomes the standard Hochschild cochain complex $C^n(A, AnniK)=$, together with the differential $\delta^{\rho^\xi}$ (or simply $\delta^\xi$) in the usual sense.
\end{defn}

In other words, $\Delta^\mu$ collapse to $\delta^\xi$ on $AnniK$. 

Denote
$$f=\Delta^\mu(h)$$ 
a sort of taking the differential of $h$ (actually no applying $\delta^{xi}$ on $h$ as it takes values in the non-module $K$). Or we write
$$f=f(\mu, h)$$
to indicate its stem from the law-covering $\mu$ and the hindrance $h$.

Therefore, we obtain the following theorem:

\begin{thm}
$f(\mu, h) \in C^3(A,AnniK)$
\end{thm}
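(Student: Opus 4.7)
The plan is to establish that, for every triple $a_1,a_2,a_3\in A$, the element $f(a_1\otimes a_2\otimes a_3)\in K$ both left- and right-annihilates every $k\in K$, so that $f$ takes its values in $AnniK$. Trilinearity of $f$ is automatic, as it is a linear combination of the bilinear $h$ precomposed with the multiplications $A\otimes A\rightarrow A$ and of the linear maps $u_{a_1}$, $v_{a_3}$; together with annihilation, this yields $f\in Hom(A^{\otimes^3}, AnniK)=C^3(A, AnniK)$.

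For the identity $f(a_1\otimes a_2\otimes a_3)\cdot k=0$ I would simply complete the calculation already started in the excerpt. Expanding $a_1\cdot\bigl(a_2\cdot(a_3\cdot k)\bigr)$ in two different orders of association, first regrouping the inner pair via (3.1) and then applying (3.1$a$), versus regrouping the outer pair via (3.1$c$) and then applying (3.1$b$), produces two expressions for the same element. Both reduce to $(a_1a_2a_3)\cdot k$ plus an explicit sum of three $h$-terms multiplied on the right by $k$. Subtracting one from the other collects precisely the four summands that make up $f(a_1\otimes a_2\otimes a_3)\cdot k$, which therefore vanishes for every $k$.

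The symmetric identity $k\cdot f(a_1\otimes a_2\otimes a_3)=0$ is obtained in a mirrored fashion from (3.2). I would first derive the analogues (3.2$a$), (3.2$b$), (3.2$c$) by treating $a_s a_t$ and $k\cdot a_r$ as integral symbols inside (3.2), and then expand $\bigl((k\cdot a_1)\cdot a_2\bigr)\cdot a_3$ in two associative orders. The difference produces exactly $k\cdot f(a_1\otimes a_2\otimes a_3)$, which must therefore vanish. Combining the two directions shows that $f$ lands in $AnniK$.

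The main technical obstacle is purely the bookkeeping of signs and indices in re-associating a string of four factors; no new structural input is needed, since the $A$-$A$-bimodule structure on $AnniK$ was set up by the preceding lemma independently of the choice of $\mu$. Permutability of the bimultiplication law $\mu$ ensures that the left and right actions behave coherently, so the two annihilation identities are compatible and the conclusion that $f(\mu,h)\in C^3(A, AnniK)$ follows at once.
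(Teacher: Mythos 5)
Your proposal is correct and follows essentially the same route as the paper: the paper's argument for this theorem is precisely the two-sided computation you describe, expanding $a_1\cdot\bigl(a_2\cdot(a_3\cdot k)\bigr)$ in two associative orders via (3.1), (3.1$a$)--(3.1$c$) to get $f(a_1\otimes a_2\otimes a_3)\cdot k=0$, and then the mirrored expansion from (3.2) to get $k\cdot f(a_1\otimes a_2\otimes a_3)=0$, so that $f$, being trilinear by construction, lands in $Hom(A^{\otimes^3},AnniK)$. Your explicit derivation of the analogues (3.2$a$)--(3.2$c$) fills in what the paper dismisses with ``likewise,'' but it is the same argument.
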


\begin{lem}  
$f(\mu, h) \in Z^3(A,AnniK)$
\end{lem}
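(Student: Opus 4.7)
The aim is to show that the Hochschild differential $\delta^\xi$ annihilates $f$. Since $f$ already takes values in $AnniK$ (established just above) and, by the preceding lemma, the bimodule structure on $AnniK$ agrees with the one induced by any covering law $\mu$ via $u_a, v_a$, I may carry out the verification entirely in terms of $\mu$ and need not distinguish $\delta^\xi$ from $\Delta^\mu$ when the output lies in $AnniK$.

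The plan is a direct expansion of
\begin{align*}
(\delta^\xi f)(a_1, a_2, a_3, a_4) = {}& u_{a_1} f(a_2, a_3, a_4) - f(a_1 a_2, a_3, a_4) + f(a_1, a_2 a_3, a_4) \\
& - f(a_1, a_2, a_3 a_4) + v_{a_4} f(a_1, a_2, a_3).
\end{align*}
Substituting $f = \Delta^\mu h$ into each of the five summands produces twenty terms, most of which match telescopically in pairs of the form $\pm h(\ldots,\ldots)$, $\pm u_b h(\ldots,\ldots)$, or $\pm v_c h(\ldots,\ldots)$ with identical arguments. Three rewrites drive the remaining cancellations: identity (3.1) on $u_{a_1}(u_{a_2} h(a_3, a_4))$, yielding $u_{a_1 a_2} h(a_3, a_4) + h(a_1, a_2)\, h(a_3, a_4)$; identity (3.2) on $v_{a_3 a_4} h(a_1, a_2)$, yielding $v_{a_4}(v_{a_3} h(a_1, a_2)) - h(a_1, a_2)\, h(a_3, a_4)$; and the permutability (regularity) of $\mu$, used once to commute $u_{a_1}$ past $v_{a_4}$ on $h(a_2, a_3)$. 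The two ``curvature corrections'' $h(a_1, a_2)\, h(a_3, a_4)$ produced by (3.1) and (3.2) carry opposite signs and annihilate one another.

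Conceptually, this is just the vanishing of $(\Delta^\mu)^2 h$ after projection to $AnniK$: the obstruction to $\Delta^\mu \circ \Delta^\mu = 0$ on $\Omega^*(A, K)$ is controlled by the curvature $R^\mu = \epsilon \circ h$, and applied to $h$ itself the corrections become symmetric quadratic $h$-by-$h$ products in $K$ that cancel on the nose. The main difficulty I anticipate is therefore not intellectual but combinatorial: organising the twenty terms and applying the three rewrite identities at precisely the correct positions (in particular, being careful that the single use of permutability is legitimate because $\mu$ is a bimultiplication law, hence regular).
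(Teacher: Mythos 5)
Your proposal is correct and takes essentially the same route as the paper: expand $\delta^\xi(\Delta^\mu h)$ into its twenty terms, cancel telescopically, and dispose of the leftover blocks via identities (3.1) and (3.2), whose two curvature corrections $h(a_1\otimes a_2)h(a_3\otimes a_4)$ appear with opposite signs and cancel. In fact you are slightly more careful than the paper, which silently lumps the pair $-a_1\cdot\bigl(h(a_2\otimes a_3)\cdot a_4\bigr)$ and $+\bigl(a_1\cdot h(a_2\otimes a_3)\bigr)\cdot a_4$ in with the ``cancelled'' terms without remarking that this single cancellation uses the regularity (permutability) of the bimultiplication law $\mu$, exactly as you flag.
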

\begin{proof}
In fact, 
\begin{align*}
(\delta^\xi \Delta^\mu h)(a_1\otimes a_2\otimes &a_3\otimes a_4)=a_1\cdot\Delta h(a_2\otimes a_3\otimes a_4)-\Delta h(a_1a_2\otimes a_3\otimes a_4)     \\
&+\Delta h(a_1\otimes a_2a_3\otimes a_4)-\Delta h(a_1\otimes a_2\otimes a_3a_4)+\Delta h(a_1\otimes a_2\otimes a_3)\cdot a_4
\end{align*}

Expand all $\Delta h$, 	
\begin{align*}
&=a_1\cdot \bigl(a_2\cdot h(a_3\otimes a_4)-h(a_2a_3\otimes a_4)+h(a_2\otimes a_3a_4)-h(a_2\otimes a_3)\cdot a_4 \bigr)        \\ 
&\qquad -\bigl(a_1a_2\cdot h(a_3\otimes a_4)-h(a_1a_2a_3\otimes a_4)+h(a_1a_2\otimes a_3a_4)-h(a_1a_2\otimes a_3)\cdot a_4\bigr)   \\
&\qquad +\bigl(a_1\cdot h(a_2a_3\otimes a_4)-h(a_1a_2a_3\otimes a_4)+h(a_1\otimes a_2a_3a_4)-h(a_1\otimes a_2a_3)\cdot a_4\bigr) \\ 
&\qquad -\bigl(a_1\cdot h(a_2\otimes a_3a_4)-h(a_1a_2\otimes a_3a_4)+h(a_1\otimes a_2a_3a_4)-h(a_1\otimes a_2)\cdot a_3a_4 \bigr)  \\
&\qquad +\bigl(a_1\cdot h(a_2\otimes a_3)-h(a_1a_2\otimes a_3)+h(a_1\otimes a_2a_3)-h(a_1\otimes a_2)\cdot a_3\bigr)\cdot a_4
\end{align*}

Most of the terms are canceled out, so the remaining terms are the sum of the following two terms:
\begin{align*}
& a_1\cdot(a_2\cdot h(a_3\otimes a_4))-a_1a_2\cdot h(a_3\otimes a_4)  \\
& h(a_1\otimes a_2)\cdot a_3a_4-(h(a_1\otimes a_2)\cdot a_3)\cdot a_4
\tag{$\star$}
\end{align*}

Since $h(\cdot, \cdot)\in K$, we can rewrite (2.1) and (2.2) as follows:
\begin{align*}
a_r\cdot(a_s\cdot h(\cdot,\cdot)) &-(a_ra_s)\cdot h(\cdot,\cdot)=h(a_r,a_s)h(\cdot,\cdot) 
  \tag{2.3}  \\
(h(\cdot,\cdot)\cdot a_r)\cdot a_s &-h(\cdot,\cdot)\cdot(a_ra_s)=h(\cdot,\cdot)h(a_r,a_s)
  \tag{2.4} 
\end{align*}

Therefore, by applying these rules to $(\star)$, we have shown $(\Delta_N \Delta h)=h(a_1\otimes a_2)h(a_3\otimes a_4)-h(a_1\otimes a_2)h(a_3\otimes a_4)=0$, as desired.
\end{proof}

\begin{lem}
Given a bimultiplication law $\mu$ that covers $\xi$, let $h$, $h'$ be two hindrances of $R^\mu$. Write $f=f(\mu, h)=\Delta^\mu h$ and $f'=f(\mu, h')=\Delta^\mu h'$ to be the corresponding cocycles. Then $h-h'=i\circ g$ for some $g\in C^2(A,ZK)$ and $f-f'=\delta^\xi g$.  
\end{lem}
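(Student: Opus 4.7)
The plan is to use the earlier identification $\ker\epsilon = AnniK$ to show $h-h'$ takes values in the biannihilator, and then to exploit the collapse $\Delta^\mu|_{AnniK}=\delta^\xi$ remarked after the definition of the central connection, so that the computation of $f-f'$ reduces to an ordinary Hochschild coboundary.

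First I would observe that both hindrances lift the same curvature, $\epsilon\circ h = R^\mu = \epsilon\circ h'$, whence $\epsilon\circ(h-h')=0$. The Proposition characterising $\ker\epsilon$ then yields a unique bilinear map $g\colon A\otimes A \to AnniK$ with $h-h' = i\circ g$; since the $A$-bimodule structure on $AnniK$ is already in hand, $g\in C^2(A, AnniK)$, which is the cochain promised by the statement. For the second equality, linearity of the symbolic differential gives
\[
f-f' \;=\; \Delta^\mu h - \Delta^\mu h' \;=\; \Delta^\mu(h-h') \;=\; \Delta^\mu(i\circ g).
\]
Because $g$ takes values in $AnniK$, and by the preceding lemma the $A$-bimodule structure on $AnniK$ is intrinsic to $\xi$ (any two bimultiplication laws covering $\xi$ differ by an element of $Inn(K)$ which acts trivially on the biannihilator), each factor $u_{a_1}\bigl(i\circ g(a_2,a_3)\bigr)$ and $v_{a_3}\bigl(i\circ g(a_1,a_2)\bigr)$ in the expansion of $\Delta^\mu(i\circ g)$ collapses to the canonical left, respectively right, action on $AnniK$. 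Matching the resulting expression term by term with the classical Hochschild formula for $\delta^\xi g$ then yields $f-f'=\delta^\xi g$, up to the harmless inclusion $i$ which one may suppress since both sides already land in $AnniK$.

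I expect the main, though modest, obstacle to be precisely this last identification: one must argue explicitly that the outer operators $u_{a_1}, v_{a_3}$ — originally defined through a possibly non-flat $\mu$ — restrict on $AnniK$ to the coupling-intrinsic action $\rho^\xi$, so that $\Delta^\mu(i\circ g)$ is literally $i\circ\delta^{\rho^\xi} g$. This is precisely the content of the previous lemma combined with the observation that inner bimultiplications vanish on $AnniK$, so it is a matter of invoking rather than proving; no delicate manipulation of the defining identities (3.1)–(3.2) for $h$ or $h'$ is required, because the curvature data have already been absorbed into the equality $\epsilon\circ h=\epsilon\circ h'$.
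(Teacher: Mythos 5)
Your proposal is correct and follows essentially the same route as the paper: first $\epsilon\circ(h-h')=0$ forces $h-h'=i\circ g$ with $g\in C^2(A,AnniK)$, and then the expansion of $\Delta^\mu(i\circ g)$ is identified term by term with $i\circ\delta^\xi g$, exactly as in the paper's computation. Your added remark that the operators $u_{a_1}, v_{a_3}$ restrict on $AnniK$ to the coupling-intrinsic action $\rho^\xi$ merely makes explicit a point the paper leaves implicit from the preceding discussion of the central representation.
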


\begin{proof}
1). Firstly we have $\epsilon \circ (h- h')=R^\mu-R^\mu=0$. Since $\epsilon(k)=0$ implies $k=i\{n\}$ and $i\{n\}k=0=ki\{n\}$, then there is a unique $g: A\otimes A \rightarrow N$ such that $h-h'=i\circ g$. \\
2).For any $a_1, a_2, a_3\in A$
We compute 
\begin{align*}
&i(f(a_1\otimes a_2 \otimes a_3)-f'(a_1\otimes a_2 \otimes a_3))  \\
&=i \circ (a_1\cdot (h-h')(a_2 \otimes a_3)-(h-h')(a_1a_2 \otimes a_3)+(h-h')(a_1\otimes a_2a_3)-(h-h')(a_1\otimes a_2)\cdot a_3)  \\
&=i \circ (a_1\cdot (i\circ g)(a_2 \otimes a_3)-(i\circ g)(a_1a_2 \otimes a_3)+(i\circ g)(a_1\otimes a_2a_3)-(i\circ g)(a_1\otimes a_2)\cdot a_3)   \\
&=i\circ \delta g(a_1\otimes a_2 \otimes a_3),  
\end{align*}
as desired.
\end{proof}


\section{The Independence of the $3$-Cocycles}
We have already seen that the construction of our target $3$-cocycle employs the coverings $\mu$ and hinderances $h$ and thus we write $f(\mu, h)$. Nevertheless, the choice of this cocycle does \textit{not} rely on the choice of $\mu$ and $h$--it only depends on the given coupling $\xi$, which we shall show in this section. Once we succeed in doing that, this cohomology $\{f\}$ will be called the \textit{obstruction} determined by $\xi$. 

Firstly, we show $f$ does not rely on the choice of $\mu$.

\begin{lem}
Let $\mu$ and $\mu'$ be two bimultiplication laws that cover $\xi$. Then 
\[
\mu'=\mu+ \epsilon \circ l 
\]
for some maps $l:A\rightarrow K$, and
\[ 
R^{\mu'}-R^\mu=\epsilon \circ(\Delta^\mu(l)+ l\cdot l)
\]
\end{lem}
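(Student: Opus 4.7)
The plan is to first construct $l$ by a pointwise lift and then expand $R^{\mu'}$ algebraically, recognizing inner bimultiplications as they arise. For existence of $l$, note that $\natural\circ\mu=\xi=\natural\circ\mu'$, so $(\mu'-\mu)(a)\in\ker\natural=Inn(K)=\mathrm{im}\,\epsilon$ for every $a\in A$. Choosing, for each $a$, a preimage $l(a)\in K$ and making the choice linearly (for instance on a basis of $A$) yields a linear map $l:A\rightarrow K$ with $\mu'=\mu+\epsilon\circ l$; the lift is only unique up to addition of a map valued in $AnniK=\ker\epsilon$, but any choice suffices for the formula.

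For the second identity, I would expand
\[
R^{\mu'}(a_1\otimes a_2)=(\mu+\epsilon l)(a_1)(\mu+\epsilon l)(a_2)-(\mu+\epsilon l)(a_1a_2)
\]
into the ``pure'' term $R^\mu(a_1\otimes a_2)$, the quadratic $\epsilon(l(a_1))\epsilon(l(a_2))$, the two mixed products $\mu(a_1)\epsilon(l(a_2))$ and $\epsilon(l(a_1))\mu(a_2)$, and the correction $-\epsilon(l(a_1a_2))$. Multiplicativity of $\epsilon$, already implicit in the normality proof of $Inn(K)\vartriangleleft Mul(K)$, immediately gives $\epsilon(l(a_1))\epsilon(l(a_2))=\epsilon(l(a_1)\cdot l(a_2))$, producing the $l\cdot l$ contribution.

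The core computation is to show $\mu(a)\epsilon(k)=\epsilon(u_a(k))$ and $\epsilon(k)\mu(a)=\epsilon(v_a(k))$. Writing $\mu(a)=(u_a,v_a)$ and $\epsilon(k)=(k^1,k^2)$, the product in $Mul(K)$ (with reversed multiplication in the second slot) becomes $(u_a k^1,k^2 v_a)$; evaluating at an arbitrary $k'\in K$ and applying the bimultiplication axioms $u(k_1k_2)=u(k_1)k_2$ and $k_1u(k_2)=v(k_1)k_2$ identifies this pair as the inner bimultiplication produced by $u_a(k)=a\cdot_\mu k$, and a parallel computation handles the right action. Assembling these pieces,
\[
R^{\mu'}(a_1\otimes a_2)-R^\mu(a_1\otimes a_2)=\epsilon\bigl(u_{a_1}(l(a_2))-l(a_1a_2)+v_{a_2}(l(a_1))+l(a_1)\cdot l(a_2)\bigr),
\]
and comparing with $\Delta^\mu(l)(a_1\otimes a_2)=a_1\cdot_\mu l(a_2)-l(a_1a_2)+l(a_1)\cdot_\mu a_2$ yields the claim. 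The only genuine obstacle is bookkeeping: one must track the reversed multiplication in the second coordinate of $Mul(K)$ and match signs and the $u_a$/$v_a$ placements against the formula for $\Delta^\mu$ on $1$-cochains, which is the same manipulation used earlier to establish normality of $Inn(K)$.
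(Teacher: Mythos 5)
Your proposal is correct and follows essentially the same route as the paper: expand $(\mu+\epsilon\circ l)(a_1)(\mu+\epsilon\circ l)(a_2)-(\mu+\epsilon\circ l)(a_1a_2)$ and use the bimultiplication axioms to absorb the mixed and quadratic terms into $\epsilon\bigl(u_{a_1}l(a_2)-l(a_1a_2)+v_{a_2}l(a_1)+l(a_1)l(a_2)\bigr)=\epsilon\circ(\Delta^\mu(l)+l\cdot l)$, exactly as the paper does coordinatewise. The only difference is cosmetic and in your favor: you justify the existence of $l$ via $\ker\natural=Inn(K)=\mathrm{im}\,\epsilon$ and package the cross-term computation as the identities $\mu(a)\epsilon(k)=\epsilon(a\cdot_\mu k)$, $\epsilon(k)\mu(a)=\epsilon(k\cdot_\mu a)$, whereas the paper asserts the lift and matches the two coordinates directly.
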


\begin{proof}

Since $\epsilon$ is a homomorphism and $l(a_i)\in K$, then
\begin{align*}
&(R^{\mu'}-R^\mu)(a_1\otimes a_2)  \\
&=(u_{a_1},v_{a_1})(l(a_2)^1,l(a_2)^2)+(l(a_1 )^1,l(a_1)^2)(u_{a_2},v_{a_2})+(l(a_1)^1,l(a_1)^2)(l(a_2)^1,l(a_2)^2)   \\
&-(l(a_1 a_2)^1,l(a_1 a_2)^2)        \\
&=(u_{a_1} l(a_2)^1,l(a_2)^2 v_{a_1}))+(l(a_1)^1 u_{a_2},v_{a_2} l(a_1)^2)+(l(a_1)^1 l(a_2)^1,l(a_2)^2 l(a_1)^2)-(l(a_1 a_2)^1,l(a_1 a_2)^2) 
\end{align*}

The first coordinate is
\begin{align*}
&u_{a_1}l(a_2)^1-l(a_1 a_2)^1+l(a_1)^1 u_{a_2}+l(a_1)^1 l(a_2)^1  \\
&=a_1\cdot l(a_2)^1-l(a_1 a_2)^1+l(a_1)^1\cdot a_2+l(a_1)^1l(a_2)^1  \\
&=(\Delta^\mu (l)+l\cdot l)^1
\end{align*}

Likewise, we have the second coordinate
\begin{align*}
&u_{a_1}l(a_2)^2-l(a_1 a_2)^2+v_{a_2}l(a_1)^2+ l(a_2)^2 l(a_1)^2   \\
&=a_1\cdot l(a_2)^2-l(a_1 a_2)^2+l(a_1)^2\cdot a_2+ l(a_2)^2 l(a_1)^2 \\
&=(\Delta^\mu (l)+l\cdot l)^2
\end{align*}
Therefore, we have 
\begin{align*}
&(R^{\mu'}-R^\mu)(a_1\otimes a_2)  \\
&=((\Delta^\mu (l(a_1\cdot a_2))+l(a_1)l(a_2))^1,(\Delta^\mu (l(a_1\cdot a_2))+l(a_1)l(a_2))^2)  \\
&=\epsilon \circ [\Delta^\mu (l)+l\cdot l]
\end{align*}
\end{proof}

\begin{lem}
Let $\mu$ be a bimultiplication law that covers $\xi$ and $h$ a hindrance of $R^\mu$. Let $\mu'$ be another bimultiplication law that covers $xi$ such that $\mu'= \mu+ \epsilon \circ l$ for some maps $l: A \rightarrow K$. Then 
\[
h' = h+ (\Delta^\mu (l)+ l\cdot l)  \qquad (\star\star)
\]
is a hindrance of $R^{\mu'}$. Moreover, $f(\mu, h)=f(\mu',h')$.
\end{lem}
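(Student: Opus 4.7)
The plan is to verify the two assertions in order. The first, that $h'$ lifts $R^{\mu'}$, is immediate from the previous lemma: since $\epsilon \circ h = R^\mu$ and $R^{\mu'} - R^\mu = \epsilon \circ (\Delta^\mu l + l \cdot l)$, linearity of $\epsilon$ yields $\epsilon \circ h' = R^{\mu'}$. For the equality $f(\mu, h) = f(\mu', h')$, my approach is to expand $\Delta^{\mu'}(h')$ and push all $\mu'$-data back onto $\mu$-data. Writing $\mu'_a = \mu_a + \epsilon(l(a))$, the $\mu'$-action decomposes as $a \cdot' k = a \cdot k + l(a) k$ and $k \cdot' a = k \cdot a + k l(a)$; substituting this together with $h' = h + (\Delta^\mu l + l \cdot l)$ into the alternating triple sum defining $\Delta^{\mu'}(h')$ should reorganize everything into
\[
\Delta^{\mu'}(h')(a_1, a_2, a_3) = \Delta^\mu(h)(a_1, a_2, a_3) + \Delta^\mu(\Delta^\mu l + l \cdot l)(a_1, a_2, a_3) + l(a_1) h'(a_2, a_3) - h'(a_1, a_2) l(a_3).
\]
The problem then reduces to showing the last three terms cancel.

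This cancellation is where the real work lies. I would split the middle term as $\Delta^\mu \Delta^\mu l + \Delta^\mu(l \cdot l)$ and treat each summand separately. For $\Delta^\mu \Delta^\mu l$, the failure of $\Delta^\mu$ to square to zero is governed precisely by the curvature: applying identity (3.1) with $k = l(a_3)$ and identity (3.2) with $k = l(a_1)$, and invoking permutability of $\mu$ to kill the associator term $(a_1 \cdot l(a_2)) \cdot a_3 = a_1 \cdot (l(a_2) \cdot a_3)$, should yield $\Delta^\mu \Delta^\mu l = h(a_1, a_2) l(a_3) - l(a_1) h(a_2, a_3)$. For $\Delta^\mu(l \cdot l)$, the bimultiplication axioms $u_a(k_1 k_2) = u_a(k_1) k_2$, $v_a(k_1 k_2) = k_1 v_a(k_2)$, and the cross axiom $k_1 u_a(k_2) = v_a(k_1) k_2$ should move $\mu$-actions past products in $K$ and reduce it to $\Delta^\mu l(a_1, a_2) \cdot l(a_3) - l(a_1) \cdot \Delta^\mu l(a_2, a_3)$.

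Plugging these two formulas in and expanding $h' = h + \Delta^\mu l + l \cdot l$ inside the residual terms $l(a_1) h'(a_2, a_3) - h'(a_1, a_2) l(a_3)$, every contribution should cancel in matched pairs: the $h$-terms against $\Delta^\mu \Delta^\mu l$, the $\Delta^\mu l$-terms against $\Delta^\mu(l \cdot l)$, and the cubic $l \cdot l \cdot l$-terms against themselves by associativity of $K$. The main obstacle is purely clerical: keeping straight which $\cdot$ denotes a $\mu$-action and which denotes multiplication in $K$, and invoking the cross axiom at the right moment to identify hybrid expressions like $l(a_1)(a_2 \cdot l(a_3))$ with $(l(a_1) \cdot a_2) l(a_3)$. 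No further conceptual input beyond the curvature equations (3.1), (3.2) and the permutability of $\mu$ should be needed.
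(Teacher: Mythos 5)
Your proposal is correct and follows essentially the paper's own route: the first claim is read off from the previous lemma, and the cocycle equality is obtained by the same cancellation pattern — identities (3.1)/(3.2) plus permutability reduce $\Delta^\mu\Delta^\mu l$ to $h(a_1,a_2)l(a_3)-l(a_1)h(a_2,a_3)$, the bimultiplication axioms turn $\Delta^\mu(l\cdot l)$ into $\Delta^\mu l\,(a_1,a_2)l(a_3)-l(a_1)\Delta^\mu l\,(a_2,a_3)$, and the cubic terms cancel by associativity. The only (immaterial) difference is bookkeeping: you keep $h'$ in the outer $l$-action terms and apply $\Delta^\mu$ to the correction, while the paper keeps $h$ there and applies $\Delta^{\mu'}$ to the correction; the two groupings are the same identity rearranged.
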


\begin{proof}
\begin{align*}
R^{\mu'}=\epsilon \circ h' &, R^\mu=\epsilon \circ h    \\
\epsilon \circ h'=\epsilon \circ h &+\epsilon \circ (\Delta^\mu (l)+l\cdot l), 
\end{align*}
as desired.

It means that 
\[
(h'{^1},h'{^2})(k)=(h^1,h^2)(k)+((\Delta^\mu(l)+l\cdot l)^1,(\Delta^\mu (l)+l\cdot l)^2)(k)
\]
By applying $k$ for the first coordinate, for example, we have the following expression
\[
h'(a_1\otimes a_2)\cdot k=h(a_1\otimes a_2)\cdot k+[\Delta^\mu l(a_1\cdot a_2)+l(a_1)l(a_2)]\cdot k
\]

By omitting the superscript temporarily and taking differential with respect to $\Delta^{\mu'}$, we have
\[
\Delta^{\mu'} h'^{1}=\Delta^{\mu'} h^1+ \Delta^{\mu'}(\Delta^\mu (l)+l\cdot l)^1
\]
Then
\[
\Delta^{\mu'} h'^{1}-\Delta^\mu h^1=\Delta^{\mu'} h^1-\Delta^\mu h^1+ \Delta^{\mu'}(\Delta^\mu (l)+l\cdot l)^1
\]
The LHS is 
\begin{align*}
&(\Delta^{\mu'} {h'}^{1}-\Delta^\mu h^1)(a_1\otimes a_2 \otimes a_3)  \\
&=u_{a_1}' h(a_2\otimes a_3)^1-h(a_1 a_2\otimes a_3)^1+h(a_1\otimes a_2 a_3 )^1+v_{a_3}' h(a_1\otimes a_2)^1  \\
&-u_{a_1}h(a_2\otimes a_3)^1+h(a_1 a_2\otimes a_3)^1-h(a_1\otimes a_2 a_3)^1-v_{a_3}h(a_1\otimes a_2)^1   \\
&=(u_{a_1}'-u_{a_1})h(a_2\otimes a_3)^1-(v_{a_3}'-v_{a_3})h(a_1\otimes a_2 )^1    \\
&=l(a_1)^1 h(a_2\otimes a_3)^1-h(a_1\otimes a_2)^1 l(a_3)^1
\end{align*}

And the RHS is(omit $1$)
\begin{align*}
& \Delta^{\mu'}(\Delta^\mu (l)+l\cdot l)  \\
&=\Delta^{\mu+\epsilon \circ l} (\Delta^\mu (l)+l\cdot l)  \\
&=\Delta^\mu \Delta^\mu (l)+\Delta^\mu (l\cdot l)+\Delta^{\epsilon \circ l} \Delta^\mu (l)+\Delta^{\epsilon \circ l}(l\cdot l)
\end{align*}

Let us compute each of the above four terms:
\begin{align*}
&\Delta^\mu \Delta^\mu (l)(a_1\otimes a_2\otimes a_3)   \\
&=a_1\cdot \Delta l(a_2\otimes a_3)-\Delta l(a_1 a_2\otimes a_3)+\Delta l(a_1\otimes a_2 a_3)-\Delta l(a_1\otimes a_2)\cdot a_3 \\
&=a_1\cdot (a_2\cdot l(a_3))-a_1\cdot l(a_2 a_3)+a_1\cdot (l(a_2)\cdot a_3)-a_1 a_2\cdot l(a_3)+l(a_1 a_2 a_3)-l(a_1 a_2)\cdot a_3   \\
&+a_1\cdot l(a_2 a_3)+l(a_1 )\cdot a_2 a_3-(a_1\cdot l(a_2))\cdot a_3-l(a_1 a_2 a_3)+l(a_1 a_2)\cdot a_3-(l(a_1 )\cdot a_2)\cdot a_3   \\
&=a_1\cdot (a_2\cdot l(a_3))-a_1 a_2\cdot l(a_3)+l(a_1)\cdot a_2 a_3-(l(a_1)\cdot a_2)\cdot a_3
\end{align*}
Now we view the element $l(a_i)$ as mappings(after putting $\epsilon$ in front of it). This gives us a negative part of previous:
\[  
h(a_1\otimes a_2)^1 l(a_3)^1-l(a_1)^1 h(a_2\otimes a_3)^1
\]

For $\Delta^\mu (l\cdot l)(a_1\otimes a_2\otimes a_3)$, set $l(x)l(y)=f(x,y)$, then
\begin{align*}
\Delta f(x,y,z)&=xf(y,z)-f(xy,z)+f(x,yz)-f(x,y)z  \\
&=x(l(y)l(z))-l(xy)l(z)+l(x)l(yz)-(l(x)l(y))z
\end{align*}

For $(\epsilon \circ l) \Delta^\mu (l)(a_1\otimes a_2)=l(a_1)\Delta l(a_2 \otimes a_3)-\Delta l(a_1\otimes a_2)l(a_3)$, we have   
\begin{align*}
l(x)\Delta l(y,z)&=l(x)(yl(z))-l(x)l(yz)+l(x)l(y)z   \\
-\Delta l(x,y)l(z)&=-xl(y)l(z)+l(xy)l(z)-l(x)yl(z)
\end{align*}

So $\Delta^\mu (l\cdot l)+(\epsilon \circ l) \Delta^\mu (l)=0$

Lastly, 
\begin{align*}
(\epsilon \circ l)(l\cdot l)&=l(a_1)^1 (l(a_2)l(a_3 ))-l(a_3)^2 (l(a_1 )l(a_2))   \\
&=l(a_1)l(a_2)l(a_3)-l(a_1)l(a_2)l(a_3)   \\
&=0  
\end{align*}
The second coordinate can be computed similarly, whence $f(\mu',h')-f(\mu,h)=\Delta^{\mu'}h'-\Delta^\mu h=0$, as desired.
\end{proof}

\begin{thm}
The coupling $\xi$ of $A$ defines a cohomological class in $HH^3(A,ZK, \rho^\xi)$, elements of which does not depend on the choice of the bimultiplication law $\mu$ that covers $\xi$ and the hindrance of the law.
\end{thm}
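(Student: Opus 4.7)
The plan is to assemble the theorem directly from the three preceding lemmas, so the proof is essentially a bookkeeping argument rather than a new calculation. First I would observe that the cocycle lemma already places $f(\mu,h)$ in $Z^3(A,\mathrm{Anni}K)$ (which we identify with $Z^3(A,ZK)$ under the representation $\rho^\xi$ described in the central-connection discussion), so the cohomology class $[f(\mu,h)] \in HH^3(A,ZK,\rho^\xi)$ is at least well defined once $\mu$ and $h$ are fixed.

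Next I would dispose of the dependence on the hindrance. Fix a covering law $\mu$ and let $h,h'$ be two hindrances of $R^\mu$. The lemma following the cocycle lemma gives $h-h' = i\circ g$ for some $g\in C^2(A,ZK)$ and $f(\mu,h)-f(\mu,h') = \delta^\xi g$, so $[f(\mu,h)] = [f(\mu,h')]$ in $HH^3(A,ZK,\rho^\xi)$. Thus for each fixed $\mu$ we get a well-defined class $[f(\mu,\cdot)]$.

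To kill the dependence on $\mu$, let $\mu'$ be any other bimultiplication law covering $\xi$. By the first lemma of this section, $\mu' = \mu + \epsilon\circ l$ for some $l:A\to K$. The second lemma then exhibits a \emph{specific} hindrance $h' = h + (\Delta^\mu(l)+l\cdot l)$ of $R^{\mu'}$ for which $f(\mu',h') = f(\mu,h)$ on the nose (not merely up to coboundary). If now $h''$ is any hindrance of $R^{\mu'}$, the previous paragraph gives $[f(\mu',h'')] = [f(\mu',h')] = [f(\mu,h)]$, and combining these identifications yields a single class $[f] = [f(\mu,h)]$ depending only on $\xi$.

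I do not expect a genuine obstacle: every computational burden is already absorbed by the lemmas of sections 3 and 5. The only point requiring minor care is the identification between the coefficient modules $\mathrm{Anni}K$ and $ZK$ and the verification that the $A$-bimodule structure used to form $HH^3(A,ZK,\rho^\xi)$ is the one coming from $\rho^\xi$, which is legitimate because $\rho^\xi$ was shown (Lemma in Section~3) to be independent of the choice of $\mu$; once that is stated explicitly, the three lemmas close the argument.
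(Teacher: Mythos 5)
Your proposal is correct and follows essentially the same route as the paper: the theorem is exactly the assembly of the cocycle lemma, the hindrance-difference lemma (difference is $\delta^\xi g$ for $g\in C^2(A,ZK)$), and the two Section~5 lemmas ($\mu'=\mu+\epsilon\circ l$ together with the specific hindrance $h'=h+\Delta^\mu(l)+l\cdot l$ giving $f(\mu',h')=f(\mu,h)$ on the nose), with the remaining hindrance ambiguity absorbed into a coboundary. Your explicit remark on identifying the coefficient module ($\mathrm{Anni}K$ versus $ZK$) and on $\rho^\xi$ being independent of $\mu$ is exactly the point the paper relies on implicitly.
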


\begin{defn}
Such a class is called the \textbf{obstruction} derived from the coupling.
\end{defn} 

As $f(\mu, h)$ becomes the representative cocycle, independent from the two ``variables", we denote it by 
$$f:=f^\xi$$
and the corresponding cohomological class is $\{f^\xi\}=Obs(\xi)$

It is fair to illustrate the utility of $f$ now. The hindrance $h$ can be roughly viewed as a ``pre-obstruction" incarnating the difference in $R^\mu$, but it has several defects: first of all, $h$ belongs to the twisted module $C^2(A, K)$ of no cochain complex. Furthermore, $h=h(\mu)$ so it does not solely rely on the choice of the coulping $\xi$. As we have seen in this part, the obstruction cocycle $f$ nullify all the drawbacks. Indeed, this is the essence of this classical problem.

The following proposition resonates ones in \cite{LiMishGa14}. 
\begin{prop}
It can be summarized by these following digramms:
\[
\begin{tikzcd}
   &&& Inn(K) \ar[hook]{d}   \\
	  0 \ar{r} &AnniK \ar{r}{i} &K \ar{r}{\epsilon} &Mul(K) \ar{r}{\natural} &Out(K) \ar{r} &0 \\
	 &&& A\ar[dashed]{u}{\mu} 
	       \ar[swap]{ur}{\xi}    
\end{tikzcd}
\]

\[
\begin{tikzcd}
   &&& Inn(K) \ar[hook]{d} \ar[dashed, bend left]{dr}{\natural} \\
	 0 \ar{r} &AnniK \ar{r}{i} &K \ar{r} \ar[two heads]{ur}{\epsilon} &Mul(K) \ar{r} &Out(K) \ar{r} &0  \\
	 &&& A\otimes A \ar{u}
	                \ar[dashed]{ul}{h}
									\ar[bend right, crossing over, swap, near end, shift right=2.8ex]{uu}{R^\mu}
									\ar[bend right, swap, yshift=1ex]{ur}{R^\xi}
\end{tikzcd}
\]
where $\natural \circ R^\mu= R^\xi=0$

\[
\begin{tikzcd}
   0 \ar{r} &AnniK \ar{r}{i} &K \ar{r}{\epsilon} &Mul(K) \ar{r}{\natural} &Out(K) \ar{r} &0 \\
   &&& A\otimes A\otimes A \ar{ull}{\Delta^\mu h}
	                         \ar[swap]{u}{\Delta^\mu R^\mu}
\end{tikzcd}
\]
where $\Delta^{\mu} R^{\mu}=0$.
\end{prop}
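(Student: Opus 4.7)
The plan is to verify each of the three diagrams separately, noting that the proposition is essentially a visual summary of what has already been established in Sections 2--4; the proof will therefore consist in pointing to the right ingredient at each spot rather than introducing new computations.

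For the first diagram, the exactness of the row
\[
0 \longrightarrow AnniK \xrightarrow{\ i\ } K \xrightarrow{\ \epsilon\ } Mul(K) \xrightarrow{\ \natural\ } Out(K) \longrightarrow 0
\]
is precisely the content of the Proposition in Section 2, once one identifies $\operatorname{im}\epsilon = Inn(K) = \ker\natural$ and $Out(K) = Mul(K)/Inn(K)$. The commutativity of the triangle $\natural\circ\mu = \xi$ is simply the definition of ``$\mu$ covers $\xi$'' from Section~3.

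For the second diagram, set $R^{\mu}(a_1\otimes a_2) = \mu(a_1)\mu(a_2) - \mu(a_1 a_2)$. Since $\natural$ is an algebra homomorphism and $\natural\circ\mu = \xi$ is too,
\[
\natural\circ R^\mu(a_1\otimes a_2) = \xi(a_1)\xi(a_2) - \xi(a_1 a_2) = 0,
\]
which is the displayed equality $R^\xi = 0$. Thus $R^\mu$ factors through $\ker\natural = Inn(K)$; surjectivity of $\epsilon$ onto $Inn(K)$ then yields the dashed lift $h \colon A\otimes A \to K$ with $\epsilon\circ h = R^\mu$ (this is exactly Lemma 3.1).

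For the third diagram the crucial point is that $\epsilon$ intertwines the symbolic differentials: using the bimultiplication axioms $u(k_1 k_2) = u(k_1) k_2$ and $v(k_1 k_2) = k_1 v(k_2)$ one checks for every $k\in K$ and $a\in A$ that
\[
\mu(a)\cdot \epsilon(k) = \epsilon(u_a k), \qquad \epsilon(k)\cdot \mu(a) = \epsilon(v_a k),
\]
from which $\Delta^\mu(\epsilon\circ c) = \epsilon\circ \Delta^\mu c$ for any cochain $c\colon A^{\otimes n}\to K$. Applying this to $c = h$ produces the lift $\Delta^\mu h$ of $\Delta^\mu R^\mu$ shown in the diagram. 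Finally, the calculation already carried out in Lemma 3.2 shows $\Delta^\mu h$ takes values in $AnniK = \ker\epsilon$, so $\Delta^\mu R^\mu = \epsilon\circ\Delta^\mu h = 0$, which is the remaining displayed identity.

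The only step demanding attention will be the intertwining $\epsilon\circ\Delta^\mu = \Delta^\mu\circ\epsilon$, because of the $\mathrm{op}$-convention in the second slot of $Mul(K)$ and the asymmetric roles of $u_{a_1}$ and $v_{a_{n+1}}$ in the formula for $\Delta^\mu$; the compatibility on the right-hand term relies specifically on the third axiom $v(k_1 k_2) = k_1 v(k_2)$. Everything else is a direct reading-off from the earlier results.
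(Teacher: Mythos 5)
Your proposal is correct and takes essentially the same route as the paper, which states this proposition as a pictorial summary without a separate proof: the exact row is Proposition 2.1, the triangle is the definition $\natural\circ\mu=\xi$ of a bimultiplication law, $\natural\circ R^{\mu}=R^{\xi}=0$ and the lift $h$ with $\epsilon\circ h=R^{\mu}$ are the opening computation and first lemma of Section 4, and the vanishing in the third diagram comes from $f=\Delta^{\mu}h$ taking values in $AnniK=\ker\epsilon$. Your explicit check of the intertwining identities $\mu(a)\,\epsilon(k)=\epsilon(u_a k)$ and $\epsilon(k)\,\mu(a)=\epsilon(v_a k)$, which makes $\Delta^{\mu}R^{\mu}=\epsilon\circ\Delta^{\mu}h=0$ rigorous, is a detail the paper leaves implicit but is verified correctly from the bimultiplication axioms.
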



\section{Identifying Lie and Associative Cochains}

In \cite{Hoch54b}, Hochschild formulates a proper cochain complex for computing the cohomology of both the ordinary and restricted Lie algebra. This equivalence, identifying Chevelley-Elienberg complex and the (normalized) Cartan-Hochschild standard complex in a particular way, is recalled in Tylor Evans' PhD thesis, \cite{Evan00}. We are going to generalize the definitions for computing Lie algeborid cohomology in terms of its universal enveloping algebroid.

Let $\mathfrak{g}$ be a Lie algebra over field $\mathbb{F}$ and let $\mathcal{M}$ be a $\mathfrak{g}$-module. It is well-known that there is a one-to-one correspondence between the Lie algebra representations of $\mathfrak{g}$ and the unitary representations of $U(\mathfrak{g})$. In this way, one may view $\mathcal{M}$ as a unitary $U(\mathfrak{g})$-module. 

1) Let us define the complex of ``Lie type":

$$\mathcal{C}_*=\{ \mathcal{C}_n, d^{\mathcal{C}} \}$$ 
where
$$\mathcal{C}_n :=U(\mathfrak{g}) \otimes \bigwedge\nolimits^{\!n} \mathfrak{g}$$

Clearly, each $\mathcal{C}_n $ becomes a $U(\mathfrak{g})$-module in a natural fashion.

The coboundary operator $d^{\mathcal{C}}_n: \mathcal{C}_n \rightarrow \mathcal{C}_{n-1} $ is defined by
\begin{align*}
d^{\mathcal{C}}_n(\mathfrak{u} \otimes x_1 \wedge \cdots \wedge x_n)&:=\sum_{i=1}^{n}(-1)^{i-1}\mathfrak{u} x_i \otimes x_1 \wedge \cdots \wedge \hat{x_i} \wedge \cdots \wedge x_n \\
& +\sum_{1 \leq s<t \leq n}(-1)^{s+t-1} \mathfrak{u} \otimes [x_s, x_t] \wedge x_1 \wedge \cdots \wedge \hat{x_s} \wedge \hat{x_t} \wedge \cdots x_n
\end{align*}

Consider the canonical augmentation $\epsilon: U(\mathfrak{g}) \rightarrow \mathbb{F}$ induced by the map $T(\mathfrak{g}) \mapsto \mathbb{F}$. Since the augmentation is surjective, we denote its kernel by
$$U(\mathfrak{g})_+=ker \epsilon,$$
that is, all of the positive parts of tensor algebra of $\mathfrak{g}$ passing over the quotient. $U(\mathfrak{g})_+$ will then play a key component of the tensor product $\tilde{S}(\cdot)$ which is the so called {\em normalized standard complex}. 


2)Let us now define the complex of ``associative type":
$$\mathcal{D}_*=\{ \mathcal{D}_n, d^{\mathcal{D}} \}$$ 
where
\begin{align*}
\mathcal{D}_n &:=U(\mathfrak{g}) \otimes {\underbrace{U(\mathfrak{g})_{+} \otimes \cdots \otimes U(\mathfrak{g})_{+}}_\text{$n$-times}} \\
              &= U(\mathfrak{g}) \otimes U(\mathfrak{g})_{+}^{\otimes^{n}}
\end{align*} 

The coboundary operator $d^{\mathcal{D}}_n: \mathcal{D}_n \rightarrow \mathcal{D}_{n-1} $ is defined by
\begin{align*}
d^{\mathcal{D}}_n(\mathfrak{u} \otimes x_1 \otimes \cdots \otimes x_n)&:=\mathfrak{u} x_1 \otimes \cdots \otimes x_n \\
&\sum_{i=1}^{n}(-1)^i \mathfrak{u} \otimes x_1 \otimes \cdots \otimes x_i x_{i+1} \otimes \cdots \otimes x_n
\end{align*}

In addition, we set $\mathcal{C}_0=\mathcal{D}_0=U(\mathfrak{g})$. 


In fact, we can show that the following two augmented complexs are free resolutions(acyclic) of $U(\mathfrak{g})$-modules:
\[\mathcal{C}_* \twoheadrightarrow \mathbb{F} \rightarrow 0\]
and  
\[\mathcal{D}_* \twoheadrightarrow \mathbb{F} \rightarrow 0 \]

\[
\begin{tikzcd}
\cdots \ar{r} 
&U(\mathfrak{g}) \otimes \bigwedge\nolimits^{\!2}\mathfrak{g} \ar{r}{d_2^\mathcal{C}} \ar{d}{\gamma} 
&U(\mathfrak{g}) \otimes \bigwedge\nolimits^{\!1}\mathfrak{g} \ar{r}{d_1^\mathcal{C}} \ar{d}{\gamma} 
&U(\mathfrak{g}) \ar[two heads]{r}{\epsilon} \ar{d}{\gamma = id} 
&\mathbb{F} \ar{r} \ar{d}{id_\mathbb{F}} 
&0     \\
\cdots \ar{r} 
&U(\mathfrak{g}) \otimes U(\mathfrak{g})_{+}^{\otimes^{2}} \ar[swap]{r}{d_2^\mathcal{D}}  
&U(\mathfrak{g}) \otimes U(\mathfrak{g})_{+}^{\otimes^{1}} \ar[swap]{r}{d_1^\mathcal{D}}  
&U(\mathfrak{g}) \ar[two heads, swap]{r}{\epsilon} 
&\mathbb{F} \ar{r}  
&0 
\end{tikzcd}
\]

If $\alpha$ and $\beta$ are any two of chain maps, we define the {\em chain homotopy} between these two complexs by assigning a family of operators $H_n: \mathcal{C}_n \rightarrow \mathcal{D}_{n+1}$ such that $d^{\mathcal{D}}_{n+1} \circ H_n + H_{n-1} \circ d^{\mathcal{C}}_n=\beta_n-\alpha_n$ for all $n$. Specifically, we require the definition:
\begin{align*}
H(\mathfrak{u})&=1 \otimes (\mathfrak{u}-\epsilon(\mathfrak{u})) \\
H(\mathfrak{u}\otimes x_1 \otimes \cdots \otimes x_n)&=1 \otimes (\mathfrak{u}-\epsilon(\mathfrak{u})) \otimes x_1 \otimes \cdots \otimes x_n
\end{align*}

Let $id=\gamma: \mathcal{C}_0 \rightarrow \mathcal{D}_0$. For $n>0$, define $\gamma:\mathcal{C}_* \rightarrow \mathcal{D}_*$ by
 
$$\gamma(\mathfrak{u} \otimes (x_1 \wedge \cdots \wedge x_n))=\sum_{\sigma} (sgn\sigma) \mathfrak{u}\otimes x_{\sigma(1)} \otimes \cdots \otimes x_{\sigma(n)}$$

Obviously, $\epsilon \circ id = \epsilon $. Next, $\gamma$ becomes an augmentation-preserving chain map once by justifying $\gamma \circ d_i^\mathcal{C} = d_i^\mathcal{D} \circ \gamma$. By interchanging the position of Lie and associative cochains, we can define a new map which actully serves as the inverse of $\gamma$ It follows that $\gamma$ is a chain homotopy equivalence. 

Let us turn our concern back to those resolutions for while. Chopping the $-1^{th}$ terms $\mathbb{F}$ and applying the left exact contravariant functor $Hom_{U(\mathfrak{g})}(-,\mathcal{M})$ to each term of both complexs, we have 
\[
\begin{tikzcd}
0 \ar{r} 
&\mathcal{M}^\mathfrak{g} \ar{r}{\delta_0^\mathcal{C}} \ar{d}
&Hom_{U(\mathfrak{g})}\big( U(\mathfrak{g}) \otimes \bigwedge\nolimits^{\!1} \mathfrak{g}, \mathcal{M} \big) \ar{r}{\delta_1^\mathcal{C}} \ar{d}
&Hom_{U(\mathfrak{g})}\big( U(\mathfrak{g}) \otimes \bigwedge\nolimits^{\!2} \mathfrak{g}, \mathcal{M} \big)  \ar{r}{\delta_2^\mathcal{C}} \ar{d} 
&\cdots             \\
0 \ar{r} 
&^{U(\mathfrak{g})} \mathcal{M}^{\textbf{0}} \ar[swap]{r}{\delta_0^\mathcal{D}}  
&Hom_{U(\mathfrak{g})} \big( U(\mathfrak{g}) \otimes U(\mathfrak{g})_{+}^{\otimes^{1}}, \mathcal{M} \big) \ar[swap]{r}{\delta_1^\mathcal{D}} 
&Hom_{U(\mathfrak{g})} \big( U(\mathfrak{g}) \otimes U(\mathfrak{g})_{+}^{\otimes^{2}}, \mathcal{M} \big) \ar[swap]{r} {\delta_2^\mathcal{D}} 
&\cdots
\end{tikzcd}
\]
where the zeroth term $Hom_{U(\mathfrak{g})} \big(U(\mathfrak{g}), \mathcal{M} \big)$ is determined by the derived functor of $\mathfrak{g}$-\textbf{Mod} or $U(\mathfrak{g})$-$\textbf{0}$-\textbf{Bimod} mapping into $R$-\textbf{Mod} respectively.

Furthermore, we have the following ``equivariant"(as in topology) isomorphisms for $n \geq 1$:
$$Hom_{U(\mathfrak{g})}\big( U(\mathfrak{g}) \otimes \bigwedge\nolimits^{\!n} \mathfrak{g}, \mathcal{M} \big) \cong Hom_{\mathbb{F}}\big(\bigwedge\nolimits^{\!n} \mathfrak{g}, \mathcal{M} \big), $$ and 

$$Hom_{U(\mathfrak{g})} \big( U(\mathfrak{g}) \otimes U(\mathfrak{g})_{+}^{\otimes^{n}}, \mathcal{M} \big) \cong Hom_{\mathbb{F}}\big(U(\mathfrak{g})_{+}^{\otimes^{n}}, \mathcal{M} \big).$$

Therefore, the corresponding coboundary operators $\delta_n^{\mathcal{C}}$ and $\delta_n^{\mathcal{D}}$ of vector spaces(instead of $U(\mathfrak{g})$-modules) are given in the usual sense. More precisely,
\begin{align*}
\delta_n^{\mathcal{C}}(f)&(x_1 \wedge \cdots \wedge x_{n+1})
:=\sum_{i=1}^{n} (-1)^{i} x_i f(x_1 \wedge \cdots \wedge \hat{x_i} \wedge \cdots \wedge x_{n+1})  \\
&+\sum_{1 \leq i<j \leq n+1} (-1)^{i+j-1} f([x_i,x_{i+1}] \wedge x_1 \wedge \cdots \wedge x_{n+1}),
\end{align*}
and
\begin{align*}
\delta_n^{\mathcal{D}}(f')&(x_1 \otimes \cdots \otimes x_{n+1})
:=x_1 f'(x_2 \otimes \cdots \otimes x_{n+1})  \\
&+\sum_{i=1}^{n}(-1)^{i}f'(x_1 \otimes \cdots \otimes x_i x_{i+1} \cdots \otimes x_{n+1}).
\end{align*}

Note that for the associative cochain, we made the RIGHT ACTION to be ZERO in the original definition of Hochschild differentials for associative algebra!

Thus, its Lie algebra cohomology with the differential considered above is
\begin{align*}
H^i_{CE}(\mathfrak{g}, \mathcal{M})&:=H^* \big(\mathcal{C}^n(\mathfrak{g}, \mathcal{M}) \big) \\
&=Ext_{U(\mathfrak{g})}^i (\mathbb{F}, \mathcal{M}),
\end{align*}
and the Hochschild cohomology
\begin{align*}
H^i_{Hoch}(U(\mathfrak{g}), \mathcal{M})&:=H^* \big(\mathcal{D}^n(U(\mathfrak{g})_+, \mathcal{M}) \big)   \\
&=Ext_{U(\mathfrak{g})-\textbf{0}}^i (\mathbb{F}, \mathcal{M}).
\end{align*}

For every associative $n$-cochain $f \in \mathcal{D}^n(U(\mathfrak{g})_+, \mathcal{M})$, we define the Lie cochain by
$$ f'(x_1 \wedge \cdots \wedge x_n):=\sum_{\sigma} sgn(\sigma) f(x_{\sigma(1)} \otimes \cdots \otimes x_{\sigma(n)}) $$

A direct computation shows $(\delta f)'=\delta(f')$ so that the map $f \mapsto f'$ induced a homomorphism on the cohomology groups. On the other hands, we should point out that $\gamma$ together with its inverse $\gamma^{-1}$ induce a isomorphism on the homology groups. Its dual map $\gamma^*$ is then identical to the map $f \mapsto f'$ and becomes a quasi-isomorphism as well. 

Consequently, the induced map
\[ 
\begin{tikzcd}
\gamma^*: H^*_{CE}(\mathfrak{g}, \mathcal{M}) \ar{r}{\cong} &H^*_{Hoch}(U(\mathfrak{g}), \mathcal{M}) 
\end{tikzcd}
\]
is an isomorphism for all $n$.


\section{Main Theorems}

In \textbf{Appendix C} one has our first correspondence
\begin{center}
An $A$-kernel $(K,\xi)$ is derived from an extension of algebras $\Leftrightarrow Obs(\xi)=0$ in $H^3(A, AnniK)$
\end{center}

Now we wish to show the second correspondence
\[
\begin{tikzcd}
\Big\{\big[(\xi, K)\big]_N \Big\} \ar{r}{Obs} &H^3(A, N)
\end{tikzcd}
\]
is a module isomorphism where the LHS is the vector space of equivalence class of $A$-kernels with common biannihilators $N$, and the RHS is the Hochschild cohomology group. Note that if we would like to clarify the representation that induces the differential for the cohomology, then we specify it by $H^3(A, N, \rho)$.The most difficult part is to show this map is an surjection. That is to say, given any cohomology class in $H^3$, we can construct a proper $A$-kernel whose derived obstruction is identical to the class. It actually describes the structure of the kernels.

Recall that $\xi$ defines an $A$-$A$-bimodule structure on $N$ in the midway of introducing the special cohomology and it does not rely on the choice of connections or bimultiplication laws that cover $\xi$. We sometimes refer this bimodule a \textbf{nucleus} of the kernel. In terms of the set of algebra kernels with common biannihilator, we can also say they have common nucleus. So nucleus-obstruction is the only twins determined by $\xi$ exclusively, while ``connection-hindrance" is otherwise. This just spells out its peculiarity.

To assign the set of $A$-kernels a linear structure, we define the addition and scalar multiplication as follows:

Define 
\[
K_1+K_2:=(K_1 \oplus K_2)/_{\{(n,-n)|n\in N\}}
\]

The factoring ideal is to cancel out those $k$ such that $x\cdot (k_1+k_2)=0$ resulting from $x=k+(-k)$ for some $x\in K_1+K_2$, and one has $Anni(K_1+K_2)=(N+N)/_{\{(n,-n)\}} \cong N$ immediately.

As $K_1+K_2$ is defined, for any two couplings $\xi_1$ and $\xi_2$, we find two (regular) bimultiplication laws $\mu_i: A \rightarrow Mul(K_i), i=1, 2$ that cover them. For any $a\in A, k_1\in K_1$ and $k_2\in K_2$, elements in the images $\mu_i(A)$ satisfy $\mu_i(a)(k_i)=(u_a, v_a)(k_i)=(a\cdot k_i, k_i \cdot a)$ for $i=1,2$.

Now $a$ acts on the direct sum of $K_i$ componentwise, after passing the quotient we have it on $[K_1\oplus K_2]$ so does on $K_1+K_2$. We denote it by $u_a([k])=a\cdot [k]$ and $v_a([k])=[k] \cdot a$ for any $[k]=[k_1\oplus k_2]\in K_1+K_2$. Thus we can define 
\[
\mu_1+\mu_2: A \rightarrow Mul(K_1+K_2):=End(K_1+K_2) \oplus End(K_1+K_2)^{op}
\]
where its image $(\mu_1+\mu_2)(A)$ consists of (permutable) elements with operations indicated as above.

Let $\xi_i: A \rightarrow Out(K_i)$ be two couplings, choose some $\mu_i$ that cover them respectively, then $\mu_1+\mu_2$ covers $\xi_1+\xi_2$. Initially, $(\xi_1+\xi_2)(A)$ consists of elements $\{([u_1+u_2]_a, [v_1+v_2]_a)\}$ derived from those elements in $\xi_1(A)$ and $\xi_2(A)$. This defines the sum of two couplings. Hence we have $(\xi_1+\xi_2, K_1+K_2)$.

Define 
\[
_{\lambda}K=(K \oplus N)/_{\{(k, -\lambda n)\}}
\]

Again, we have $Anni(_{\lambda}K)=(N\oplus N)/_{\{(n, -\lambda n)\}} \cong N$. This gives us $(_{\lambda}\xi, _{\lambda}K)$.

\begin{prop} ~\\ 
1) $Obs(\xi_1+\xi_2)=Obs(\xi_1)+Obs(\xi_2)$; \\
2) $Obs(_{\lambda}\xi)=_{\lambda}Obs(\xi)$; \\
3) Denote $(\cdot)^{ext}$ for an extendible kernel. If $(\xi_1, K_1)^{ext}, (\xi_2, K_2)^{ext}$, then $(\xi_1+\xi_2, K_1+K_2)^{ext}$; \\
4) If $_{\lambda}(\xi, K)^{ext}$, then $(_{\lambda}\xi, _{\lambda}K )^{ext}$.
\end{prop}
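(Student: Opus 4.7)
The plan is to deduce (3) and (4) from (1) and (2) together with the extensibility criterion recalled at the start of this section (vanishing of $Obs(\xi)$ characterises extendibility), so the main work is to establish the two additivity statements at the level of the representative cocycle $f(\mu, h)$.

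For (1), I would first choose bimultiplication laws $\mu_i : A \to Mul(K_i)$ covering $\xi_i$ with hindrances $h_i$ of $R^{\mu_i}$ for $i=1,2$. The crucial observation, enabled by the common--nucleus hypothesis $N = AnniK_1 = AnniK_2$, is that the restrictions $\mu_i|_N$ coincide with the single central connection $\rho$ (by the independence lemma of Section~3). Consequently the componentwise action of $\mu_1 \oplus \mu_2$ on $K_1 \oplus K_2$ preserves the ideal $\{(n,-n) : n \in N\}$ and descends to the bimultiplication law $\mu_1 + \mu_2$ on $K_1 + K_2$ described just before the statement. By construction it covers $\xi_1 + \xi_2$, and the class $\bar h$ of $(h_1, h_2)$ in $K_1 + K_2$ is a hindrance of $R^{\mu_1+\mu_2}$. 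A direct expansion of $\Delta^\mu$ shows that $\Delta^{\mu_1+\mu_2}(\bar h)$ splits as $\Delta^{\mu_1}(h_1) \oplus \Delta^{\mu_2}(h_2)$, because every summand of the differential acts separately on each factor; under the canonical identification $Anni(K_1+K_2) \cong N$ this becomes $f(\mu_1, h_1) + f(\mu_2, h_2)$, and (1) follows after passing to cohomology.

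For (2), the argument is parallel. On ${_{\lambda}K} = (K \oplus N)/\{(n, -\lambda n) : n \in N\}$, I would define a bimultiplication law ${_{\lambda}\mu}$ by taking $\mu$ on the first summand and the central connection $\rho$ on the second, descent being again guaranteed by $\mu|_N = \rho$. The image of $h$ in ${_{\lambda}K}$ serves as a hindrance ${_{\lambda}h}$ of $R^{{_{\lambda}\mu}}$, and a componentwise computation gives $\Delta^{{_{\lambda}\mu}}({_{\lambda}h}) = \lambda \cdot f(\mu, h)$ under the identification $Anni({_{\lambda}K}) \cong N$ via $[(n_1, n_2)] \mapsto n_1 + \lambda n_2$. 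Items (3) and (4) now follow from the extensibility criterion: if $Obs(\xi_i) = 0$ then by (1) also $Obs(\xi_1+\xi_2) = 0$, so $(\xi_1+\xi_2, K_1+K_2)$ is extendible, and analogously for (4) via (2).

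The main obstacle is verifying all of the descents rigorously --- that $\mu_1 + \mu_2$ and ${_{\lambda}\mu}$ are well-defined on the relevant quotients, that the reduced hindrances remain hindrances of the descended curvatures rather than merely of their lifts, and that the identifications of the biannihilators with $N$ are $A$-bimodule maps. Each of these points hinges on the Section~3 lemma asserting that $\mu|_{AnniK} = \rho^\xi$ depends only on $\xi$, which is precisely the role played by the common--nucleus assumption. Once the descents are established, the cocycle identities decouple along the direct summands and the linearity claims read off directly.
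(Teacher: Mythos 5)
Your overall route is the right one, and it is in fact the argument the paper intends: the proposition is stated there without a separate proof, immediately after the paper constructs exactly the ingredients you use (the componentwise law $\mu_1+\mu_2$ covering $\xi_1+\xi_2$, $Anni(K_1+K_2)\cong N$, and the analogous data for ${}_{\lambda}K$), and items (3), (4) are indeed meant to follow from (1), (2) via the zero-obstruction criterion of Appendix C. Your treatment of (1) is sound: the common-nucleus hypothesis gives $\mu_1|_N=\mu_2|_N=\rho^{\xi_1}=\rho^{\xi_2}$, which is precisely what makes the ideal $\{(n,-n)\}$ stable under the componentwise action, $(h_1,h_2)$ descends to a hindrance of $R^{\mu_1+\mu_2}$ because multiplication in $K_1+K_2$ is componentwise, and $[(n_1,n_2)]\mapsto n_1+n_2$ is an $A$-bimodule isomorphism carrying $[(f_1,f_2)]$ to $f_1+f_2$.

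In (2), however, the identification you write down is wrong as stated: $[(n_1,n_2)]\mapsto n_1+\lambda n_2$ does not vanish on the ideal $\{(n,-\lambda n)\mid n\in N\}$ (it sends $(n,-\lambda n)$ to $(1-\lambda^2)n$), so it is not well defined on $Anni({}_{\lambda}K)=(N\oplus N)/\{(n,-\lambda n)\}$, and even taken formally it would send your cocycle $[(\Delta^{\mu}h,0)]$ to $f$ rather than $\lambda f$, so the scalar would never appear. The canonical (pushout) identification is $[(n_1,n_2)]\mapsto \lambda n_1+n_2$: it kills the ideal, is an $A$-bimodule map, and since $[(f,0)]=[(0,\lambda f)]$ it sends the descended cocycle to $\lambda f$. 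With that correction your computation does give $Obs({}_{\lambda}\xi)=\lambda\,Obs(\xi)$, and (3), (4) follow from (1), (2) together with the criterion that a kernel is extendible exactly when its obstruction class vanishes, as you say.
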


Given $\xi_i$ withe some coverings $\mu_i$, let us consider the following diagram:
\[
\begin{tikzcd}
& Mul(K_1) \ar{r} \ar[dashed]{dd}{\bar{\sigma}} & Out(K_1) \ar{dd} \\
A \ar{ur}{\mu_1} \ar[bend right= 5 ]{urr}{\xi_1} \ar[swap]{dr}{\mu_2} \ar[bend left= 5, swap]{drr}{\xi_2} \\
& Mul(K_2) \ar{r} & Out(K_2)
\end{tikzcd}
\]
where $Anni K_1=Anni K_2=N$, that is, the kernels $(\xi_1, K_1), (\xi_2, K_2)$ with common nucleus $N$ and $\bar{\sigma}$ is induced by $\sigma: K_1 \rightarrow K_1$ with $\sigma(N)=N$. 

\begin{defn}
Two $A$-kernels with common nucleus $N$ is said to be \textbf{isomorphic}, or $(\xi_1, K_1)_N \cong (\xi_2, K_2)$, if there is an isomorphism of algebras $\sigma$ fixing the biannihilator and $\bar{\sigma} \circ \mu_1=\mu_2$. 
\end{defn} 

The latter statement means $\bar{\sigma} (u_1, v_1)_a(k)=(u_2, u_2)_a(k)$ is a \emph{homomorphism} of images $\mu_i(A) \subset Mul(K_i)$ for any pairs of endomorphisms with respect to any $k$.

\begin{defn}
Two kernels with common nucleus $N$ are said to be \textbf{equivalent}, or $(\xi_1, K_1)_N \sim (\xi_2, K_2)_N$, if there are two extendible kernels $(\eta_1, S_1)^{ext}, (\eta_2, S_2)^{ext}$ with the \emph{same} nucleus such that
\[
(\xi_1+\eta_1, K_1+S_1) \cong (\xi_2+\eta_2, K_2+S_2)
\]
\end{defn}

Denote the equivalence class of kernels with common nucleus by 
\[
\big[(\xi, K)\big]_N:=(\xi, K)_N/_ \sim
\]

Given any algebra $A$, an $A$-$A$-bimodule $M$ and any element in $H^3(A, M)$, we would like to find a proper kernel (and a coupling in turn) that realized it. Using the language of connection and representation from part \textbf{3}, we obtain our first structure theorem: 

\begin{thm}
Given any associative algebra $A$ and let $(M, \rho)$ be an representation of $A$ where $\rho: A \rightarrow Mul(M)$ is a flat connection. Let $c$ be an element in $HH^3(A, M, \rho)$, then

1)there exists an algebra $K$ having a left $A$-module structure and such that $AnniK=M$,

2)there exists a homomorphism $\xi: A \rightarrow Out(K)$ such that the induced central representation $\rho^{\xi}: A \rightarrow Mul(AnniK)$ is equal to $\rho$, and

3) $Obs(\xi)$ coincides with $c$. 

Moreover, $(\xi, K)$ becomes the coupling of $A$.
\end{thm}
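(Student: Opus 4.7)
The plan is to realize an arbitrary class $c \in HH^3(A, M, \rho)$ as the obstruction of a concretely built kernel. Fix a normalized representative cocycle $f \in Z^3(A, M)$ of $c$. From the data $(A, M, \rho, f)$ I would produce an algebra $K$ with $AnniK = M$, together with a coupling $\xi: A \to Out(K)$ whose induced central representation on $M$ is $\rho$ and whose obstruction is the class of $f$.

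The construction enlarges $M$, which as a sub-ring carries only the trivial multiplication, by formal ``$A$-shadows'' encoding the prospective left- and right-actions. A natural candidate as a vector space is
\[
K \;=\; M \;\oplus\; (A \otimes_{\mathbb{F}} M) \;\oplus\; (M \otimes_{\mathbb{F}} A) \;\oplus\; (A \otimes_{\mathbb{F}} M \otimes_{\mathbb{F}} A),
\]
with $M$ embedded as the first summand. The multiplication on $K$ is to be arranged so that every element of $M$ annihilates $K$ on both sides (forcing $M \subseteq AnniK$), so that the tensor summands carry both the left and the right $\rho$-actions of $A$ on $M$, and so that the associator among three $A$-tensor factors is corrected precisely by the 3-cochain $f$. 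Associativity of $K$ then reduces, after bookkeeping, to the cocycle identity $\delta^{\rho} f = 0$ together with the flatness of $\rho$; the biannihilator $AnniK$ equals exactly $M$ because every element outside the first summand fails to be annihilated by some $A$-shadow.

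The natural left- and right-translation by $A$ on the tensor summands defines a bimultiplication law $\mu: A \to Mul(K)$, and its projection to $Out(K) = Mul(K)/Inn(K)$ is the desired coupling $\xi$. In general $\mu$ is non-flat: the curvature $R^{\mu}(a_1 \otimes a_2) = \mu(a_1)\mu(a_2) - \mu(a_1 a_2)$ is an inner bimultiplication produced by an element $h(a_1, a_2) \in K$ which can be read off directly from the twisted multiplication. A computation using the explicit formula for $\Delta^{\mu} h$ in Section 3 would then show $f(\mu, h) = f$, giving $Obs(\xi) = c$. The two remaining conditions are verified by inspection: $\rho^{\xi} = \rho$ holds because the restriction of $\xi$ to $AnniK$ is, by the very construction of the multiplication, the given representation $\rho$; and the ``moreover'' clause is automatic, since $\xi$ has been produced as a homomorphism into $Out(K)$.

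The main obstacle will be the careful design of the multiplication on $K$: all axioms must be balanced so that associativity encapsulates the 3-cocycle identity without spoiling the biannihilator property, and so that the hindrance $h$ extracted from $R^{\mu}$ coincides exactly with the cochain recording the twist. Once that is in place, Lemmas 4.1--4.3 and Theorem 4.1 furnish most of the machinery needed to identify $Obs(\xi)$ with the class of $f$. Independence of the equivalence class $[(\xi, K)]_M$ from the chosen representative is then a consequence of Proposition 6.1 together with Lemma 4.3: two cohomologous representatives $f, f'$ differ by a coboundary $\delta^{\rho} g$, and the kernel manufactured from $\delta^{\rho} g$ alone is extendible, so adding it does not change the equivalence class.
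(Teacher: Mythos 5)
Your proposal is a plan rather than a proof, and the plan diverges from the paper's construction at exactly the point where the real work lies. The paper (Appendix D) builds $K=M\oplus L$ with $L=J\oplus A\otimes A\otimes A^{*}$, $J=C\oplus I$, $C=\mathbb{F}e\oplus\mathbb{F}f$ and $I=E\otimes A'\oplus E\otimes A'\otimes A'\oplus E\otimes A'\otimes A'\otimes A'$, together with an explicit multiplication table. Each of these pieces has a job: the pure-$A$ component $A\otimes A\otimes A^{*}$ hosts the hindrance $\bar h(a_1\otimes a_2)=a_1\otimes a_2\otimes 1$, the left $A$-action on it is twisted by $f$ (via $a\cdot(a_1\otimes a_2\otimes a_3^{*},m)=(\dots,\,f(a\otimes a_1\otimes a_2)a_3^{*}+a\cdot m)$, which is where Lemmas 6.1--6.2 enter), and the idempotent-generated part $J$ supplies elements ($e$, $e\otimes a$, \dots) on which the inner bimultiplication produced by $\bar h$ acts nontrivially, which simultaneously makes the curvature $R^{\mu}$ genuinely nonzero and forces $AnniK=M$ exactly (Step 3 / the $AnniL=0$ computation). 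Your candidate $K=M\oplus(A\otimes M)\oplus(M\otimes A)\oplus(A\otimes M\otimes A)$ has no analogue of either feature: there is no component built from tensor powers of $A$ alone in which a hindrance $h:A\otimes A\to K$ can naturally live, and since $f$ takes values in $M$ while every extra summand you propose carries an $M$-factor, any hindrance you can write down tends to take values in or near $AnniK$; but if $h$ takes values in $M$ then $\epsilon\circ h=0$, $\mu$ is flat, and $f(\mu,h)=\delta^{\rho}h$ is a coboundary, so $Obs(\xi)=0\neq c$ in general. The assertions that "associativity reduces, after bookkeeping, to the cocycle identity", that "$AnniK$ equals exactly $M$", and that the extracted $h$ satisfies $\Delta^{\mu}h=f$ are precisely the content of the theorem, and none of them is verified or even plausibly set up by the proposed decomposition.

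Two smaller points. First, the final paragraph about independence of the equivalence class $[(\xi,K)]_M$ is not part of this statement (it belongs to the surjectivity discussion around Proposition 6.1) and cannot substitute for the missing construction. Second, if you want to salvage your approach, the fix is essentially to rediscover the paper's components: adjoin a copy of $A\otimes A$ (or $A\otimes A\otimes A^{*}$) with the $f$-twisted left action of Lemma 6.1 so that $h(a_1\otimes a_2)=a_1\otimes a_2\otimes 1$ satisfies $\Delta h=f$ there, and adjoin auxiliary elements such as the idempotents $e,f$ and the spaces $E\otimes A'^{\otimes k}$ so that these new elements are not annihilators and the produced inner bimultiplications are nonzero; at that point you are reproducing Appendix D rather than giving an alternative route.
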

\begin{proof} ~\\
According to \cite{Hoch46} the proof is highly constructive. We simply sketch each step here. For details, consult \textbf{Appendix D}.

\begin{itemize} 
\item (Step 1) Define all the direct summands of $K$.

\item (Step 2) Define multiplications between the components of $K$.

\item (Step 3) Show the biannihilator of $L$ is trivial so that $AnniK=M$.

\item (Step 4) Define the left and right $A$-actions on $K$.

\item (Step 5) Show the four conditions hold whence we find a concrete connection $\bar{\mu}: A \rightarrow Mul(K)$.

\item (Step 6) Set $\bar{\xi}:= \natural \circ \bar{\mu}$. Hence $\bar{\mu}|_{AnniK}$ depends on the choice of $\bar{\xi}$ and we denote it by $\bar{\mu}|_{AnniK}=\rho^{\bar{\xi}}$. The pair $(M, \rho^{\bar{\xi}})$ becomes the central representation of $A$. It induces a differential $\delta^{\bar{\xi}}$ and actually, $\delta^{\bar{\xi}}=\delta^\rho$.

\item (Step 7) Write $c=\{f\}$ where the representative cocycle $f$ determines an \emph{extension of bimodules} of $M$ by $A\otimes A\otimes A^*$ within $K$. 

\item (Step 8) There is a suitable cochain $\bar{h}: A \otimes A \rightarrow E$ for some $E$ with $M\subset E\subset K$ such that $\Delta|_{E} \bar{h}=f$. Moreover, $\bar{h}$ becomes the hindrance of $\xi$ (see Lemma 6.1 and 6.2). Define a proper bilinear map
\[
\bar{h}(a_1 \otimes a_2)=a_1 \otimes a_2 \otimes 1
\]

\item (Step 9) Set $F^{\bar{\xi}}:=F(\bar{\mu}, \bar{h})$. Then $Obs(\bar{\xi})=\{F^{\bar{\xi}}\}$ and is identical to $c$. Namely, $F^\xi \equiv f$.
\end{itemize}
\end{proof}

What we really need later is an simplified version of above theorem. This refines the structure of extension of bimodules in step 7 by reducing $A\otimes A\otimes A^*$ to $A\otimes A$. We endow it with a bimodule structure through the following operations:
\begin{align*}
a_0 \cdot (a_1 \otimes a_2)&:=a_0 a_1 \otimes a_2 -a_0 \otimes a_1 a_2     \\
(a_1 \otimes a_2)\cdot a_0 &:=0
\end{align*}

The next two important lemmas are attributed to Hochschild:

\begin{lem}
Let $Q$ be any bimodule over $A$. Any element $f\in Z^3(A, Q)$ in the light of Hochschild cohomology defines a split extension of bimodules of $M$ by $A\otimes A$.
\end{lem}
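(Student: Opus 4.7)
The plan is to realise the extension $E$ on the vector space $M \oplus (A \otimes A)$, where $M$ denotes the coefficient bimodule (identified with $Q$ in the statement), and to twist the direct-sum bimodule structure by $f$ so that the five-term Hochschild cocycle identity $\delta f = 0$ becomes exactly the combined content of the bimodule axioms on $E$.

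I would define the left and right $A$-actions by
\[
a \cdot (m,\, a_1 \otimes a_2) := \bigl(a m + g(a, a_1, a_2),\ a a_1 \otimes a_2 - a \otimes a_1 a_2\bigr),
\]
\[
(m,\, a_1 \otimes a_2) \cdot a := \bigl(m a + h(a_1, a_2, a),\ 0\bigr),
\]
extended linearly on $A \otimes A$, for linear maps $g,h \colon A^{\otimes 3} \to M$ derived from $f$. The second coordinates reproduce the prescribed bimodule structure on $A \otimes A$ given just above the statement. The transparent first guess is $g(a,a_1,a_2) = f(a,a_1,a_2)$ and $h(a_1,a_2,a) = f(a_1,a_2,a)$, though it may be necessary to redistribute the five terms of $\delta f$ between $g$ and $h$ -- for instance setting $h(a_1,a_2,a) = H(a_1,a_2)\,a$ for some 2-cochain $H : A \otimes A \to M$ and then solving for $g$ via left-right commutation -- in order for all three bimodule axioms to balance exactly.

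The verification splits into three axioms. Left associativity $(ab)\cdot x = a\cdot(b\cdot x)$ unpacks, using the bar-style left action on $A \otimes A$, into the four-term relation $a\,g(b,a_1,a_2) - g(ab,a_1,a_2) + g(a,b a_1,a_2) - g(a,b,a_1 a_2) = 0$; right associativity, because the right action on $A \otimes A$ is zero, becomes $h(a_1,a_2,a)\,b = h(a_1,a_2,ab)$; left-right commutation produces a cross-relation linking $g$ and $h$. The main obstacle will be to choose $g$ and $h$ from $f$ in such a way that these three conditions \emph{jointly} recover $\delta f = 0$, distributing the outer terms $a_0 f(a_1,a_2,a_3)$ and $f(a_0,a_1,a_2)\,a_3$ of the cocycle identity into the two associativity conditions and the inner terms into the commutation.

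Once $E$ is verified to be a bimodule, the canonical maps $\iota\colon M \hookrightarrow E$, $m \mapsto (m,0)$, and $\pi\colon E \twoheadrightarrow A \otimes A$, $(m,\omega) \mapsto \omega$, are bimodule homomorphisms by construction and fit into the short exact sequence
\[
0 \longrightarrow M \longrightarrow E \longrightarrow A \otimes A \longrightarrow 0,
\]
which is split at the vector-space level by the section $\omega \mapsto (0,\omega)$ -- the meaning of ``split extension'' used throughout the paper (cf.~\S1). A natural sanity check after the fact is that replacing $f$ by a coboundary $\delta g$ should produce a bimodule isomorphic to the trivial $M \oplus (A \otimes A)$ via an $f$-dependent change of the embedding $\iota$, matching the expectation that the construction descends to a well-defined map on cohomology.
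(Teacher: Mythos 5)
Your overall strategy---realize $E$ on $M\oplus(A\otimes A)$, twist the direct-sum actions by $f$, and split by the obvious vector-space section---is the paper's strategy, but you stop short of the step that actually constitutes the proof: you never fix $g$ and $h$, and you explicitly defer the verification (``the main obstacle will be to choose $g$ and $h$ \dots'') that the three bimodule axioms reduce to $\delta f=0$. This is a genuine gap, not bookkeeping, because your ``transparent first guess'' does not work. With $h(a_1,a_2,a)=f(a_1\otimes a_2\otimes a)$, right associativity demands $f(a_1\otimes a_2\otimes ab)=f(a_1\otimes a_2\otimes a)\cdot b$ (the second right multiplication contributes no $h$-term since the $A\otimes A$ component has already been killed), whereas the cocycle identity gives $f(a_1\otimes a_2\otimes ab)-f(a_1\otimes a_2\otimes a)\cdot b=a_1\cdot f(a_2\otimes a\otimes b)-f(a_1a_2\otimes a\otimes b)+f(a_1\otimes a_2a\otimes b)$, which is not zero in general; and taking $g=f$ simultaneously overdetermines the mixed axiom $(a\cdot x)\cdot b=a\cdot(x\cdot b)$. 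The single five-term identity $\delta f=0$ cannot simply be ``distributed'' over three independent associativity conditions; a definite choice has to be made and checked, and the ones you float are not it.

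The paper's choice is to put the entire twist into the left action and leave the right action untwisted: $a\cdot(p,q)=(a\cdot p,\ f(a\otimes a_1\otimes a_2)+a\cdot q)$ and $(p,q)\cdot a=(p\cdot a,\ q\cdot a)=(0,\ q\cdot a)$, where $p=a_1\otimes a_2$ carries the prescribed structure $a_0\cdot(a_1\otimes a_2)=a_0a_1\otimes a_2-a_0\otimes a_1a_2$, $(a_1\otimes a_2)\cdot a_0=0$; no $h$ appears at all. With this choice the left-associativity check in the $Q$-coordinate is the cocycle identity with its last term $f(a\otimes b\otimes a_1)\cdot a_2$ suppressed, and the mixed axiom needs $f(\cdot)\cdot a$ to vanish---exactly what is available in the normalized setting in which the lemma is later used (the right action is made zero; cf.\ Section 5, Theorem 4 and Lemma 6.2, where the term $h_E(a\otimes a_1)\cdot a_2$ drops out for the same reason). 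The paper then finishes by exhibiting the section $\gamma(p)=(p,0)$, forming $\varphi_a(p)=a\cdot\gamma(p)-\gamma(a\cdot p)$, and checking $f^\gamma\equiv f$, i.e.\ that the extension constructed actually realizes the given cocycle---a point your proposal only gestures at through the coboundary ``sanity check.'' As written, your text is a plan rather than a proof: the definition of the actions, the reduction of the axioms to $\delta f=0$, and the identification of the resulting cocycle with $f$ are all missing, and the candidate definitions you do offer fail the axioms.
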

\begin{proof}
Let the underlying vector space $E=A\otimes A \oplus Q$ with a bimodule structure defined as follows
\begin{align*}
a \cdot (p, q)&:= (a\cdot p, f(a\otimes a_1 \otimes a_2)+a \cdot q)  \\
(p,q)\cdot a&:=(p\cdot a, q\cdot a)
\end{align*}
Define $\pi: E \rightarrow A\otimes A$ by $\pi(p, q)=p$. We claim that $(E, \pi)$ becomes a split extension of bimodules. Indeed, $ker \pi=\{(0,q)|\pi(0,q)=0, \forall q \in Q \}$, so we can identify the sub(bi)module $(0, Q)$ with $Q$. 
\[
\begin{tikzcd}
0 \ar{r} &Q \ar[tail]{r} &E \ar{r}{\pi} &A\otimes A \ar{r} \ar[dashed, bend left, xshift=-1mm]{l}{\gamma} &0
\end{tikzcd}
\]
Set $\gamma: A \otimes A \rightarrow E$ with 
\[
\gamma(p):=(p,0)
\]
Then we have $\pi \gamma=id_{A\otimes A}$ for $\pi \circ \gamma(p)=\pi(p,0)=p$, as desired. Finally, we define 
$$\varphi_a(p):=a \cdot \gamma(p)- \gamma(a\cdot p)$$
Since $\pi \varphi=a\cdot p- a\cdot p=0$ then $\varphi_a(p) \in Q$, and we have $\varphi_a: A\otimes A \rightarrow Q$. Therefore the map $a \mapsto \varphi_a$ defines an element 
\[
f^\gamma \in Hom \big(A, Hom(A\otimes A, Q) \big) \cong Hom(A\otimes A \otimes A, Q).
\]
One can check that $\delta_{Hom(A\otimes A, Q)} f^\gamma=0$ with a proper bimodule structure on $Hom(A\otimes A, Q)$. Hence, $f^\gamma$ becomes a cocycle.

On the other hand, we have $\varphi_a(p)=a \cdot (p,0)- (a\cdot p, 0)=(a\cdot p, f(a\otimes a_1 \otimes a_2))-(a\cdot p,0)=(0, f(a\otimes a_1 \otimes a_2))$. We conclude that $f^\gamma \equiv f$ and the lemma is proved. 
\end{proof}

\begin{lem}
Given $f\in Z^3(A, Q)$ with the corresponding induced split extension of bimodules as above. There is an element $h_E \in C^2(A, E)$ such that $\delta_E h_E=f$.
\end{lem}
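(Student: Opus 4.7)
The plan is to take the section $\gamma$ constructed in the previous lemma and promote it to the sought-after cochain, i.e.\ to define
\[
h_E(a_1 \otimes a_2) := \gamma(a_1 \otimes a_2) = (a_1 \otimes a_2,\, 0) \in E.
\]
This candidate is essentially forced, because the Hochschild coboundary of $\gamma$ measures the failure of $\gamma$ to be an $A$-$A$-bimodule map, and the bimodule structure on $E$ was rigged in the preceding lemma precisely so that this failure equals $f$.

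The key computation I would carry out is to expand
\[
\delta_E h_E(a_1\otimes a_2\otimes a_3) = a_1\cdot h_E(a_2\otimes a_3) - h_E(a_1 a_2\otimes a_3) + h_E(a_1\otimes a_2 a_3) - h_E(a_1\otimes a_2)\cdot a_3
\]
and then project onto the two summands of $E = A\otimes A \oplus Q$. The right action on the $A\otimes A$ summand was declared trivial, so the last term drops out entirely on both coordinates. In the $A\otimes A$ coordinate, the three surviving terms give, after inserting the defining formula $a_0\cdot(a_1\otimes a_2) = a_0 a_1\otimes a_2 - a_0\otimes a_1 a_2$, the expression $(a_1 a_2\otimes a_3 - a_1\otimes a_2 a_3) - a_1 a_2\otimes a_3 + a_1\otimes a_2 a_3$, which visibly telescopes to $0$. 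In the $Q$ coordinate, the only nonzero contribution comes from the ``twist'' in $a_1\cdot h_E(a_2\otimes a_3)$, and by the defining formula for the left action on $E$ this twist is precisely $f(a_1\otimes a_2\otimes a_3)$.

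Reading these two coordinates together yields $\delta_E h_E = (0, f) = f$ under the embedding $Q \hookrightarrow E,\ q\mapsto (0,q)$. I do not expect any real obstacle; the lemma is a tautological verification once one notices that the bimodule structure on $E$ was engineered so that $\gamma$ is a primitive of $f$. The only subtlety worth stating explicitly is that $Q$ is a sub-bimodule of $E$, so that the restriction of $\delta_E$ to cochains landing in $Q$ agrees with $\delta_Q$ and the identification $\delta_E h_E = f$ is unambiguous.
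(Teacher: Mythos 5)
Your proposal is correct and coincides with the paper's own proof: both take $h_E(a_1\otimes a_2)=(a_1\otimes a_2,0)$ and verify directly that the twisted left action contributes $f$ in the $Q$-coordinate while the $A\otimes A$-coordinate cancels and the right-action term vanishes. No further comment is needed.
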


\begin{proof}
Define $h: A\otimes A \rightarrow E$ by
\[
h_E(a_1 \otimes a_2):=(a_1 \otimes a_2,0)
\]

Next, we compute
\begin{align*}
\delta_E h_E(a \otimes a_1 \otimes a_2)&=a\cdot h_E(a_1 \otimes a_2) -h_E(a a_1 \otimes a_2)+h_E(a\otimes a_1 a_2)-h(a\otimes a_1) \cdot a_2 \\
& =a \big(a_1 \otimes a_2, 0 \big)-\big(a a_1 \otimes a_2, 0 \big)+ \big(a\otimes a_1 a_2, 0 \big)   \\
& =\big(a a_1 \otimes a_2-a\otimes a_1a_2, f(a\otimes a_1 \otimes a_2)\big)-\big(a a_1 \otimes a_2, 0 \big)+ \big(a\otimes a_1 a_2, 0 \big) \\
&=\big(0, f(a\otimes a_1 \otimes a_2)\big) \in (0,Q)
\end{align*}

As we have identified $(0,Q)$ with $Q$, then $f$ becomes a coboundary.
\end{proof}

Since we will use the enveloping algebra of Lie algebra $\mathfrak{g}$, we appropriate the position of $A$ by denoting $U=U(\mathfrak{g})$ in next theorem:

\begin{thm}
If in particular the right $U$-action on $P_2=U \otimes U$ is trivial, then

a) reset $h(a_1 \otimes a_2)=a_1 \otimes a_2$ such that
$\epsilon \circ h = R^\mu$, and by lemma 6.2 we have
$\Delta|_E h \equiv f$;   
  
b) in this case the structure of $K$ can be simplified.
\end{thm}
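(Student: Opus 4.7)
The plan is to interpret the statement within the construction of Theorem 6.1 specialized to $A = U = U(\mathfrak{g})$ and the bimodule $Q = M$ carrying the trivial-right-action structure declared just above Lemma 6.1, namely $a_0\cdot(a_1\otimes a_2)=a_0 a_1\otimes a_2 - a_0 \otimes a_1 a_2$ and $(a_1\otimes a_2)\cdot a_0 = 0$. The hypothesis ``the right $U$-action on $P_2=U\otimes U$ is trivial'' is precisely a re-statement of that bimodule structure, and it is exactly what makes Lemmas 6.1 and 6.2 directly applicable in this context.

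For part (a), I would read off the hindrance from Lemma 6.2: on the split bimodule extension $E=(A\otimes A)\oplus M$ the lemma produces $h_E(a_1\otimes a_2)=(a_1\otimes a_2,0)$ and establishes $\delta_E h_E(a\otimes a_1\otimes a_2)=(0,f(a\otimes a_1\otimes a_2))$, which under the identification $(0,M)\cong M$ is exactly $\Delta|_E h_E\equiv f$. Projecting to the first coordinate yields the clean assignment $h(a_1\otimes a_2):=a_1\otimes a_2$. It then remains to verify $\epsilon\circ h=R^\mu$ for the bimultiplication law $\bar\mu$ built in Step~5 of Theorem 6.1. Unwinding definitions, the left coordinate of $\bar\mu(a_0)\bar\mu(a_1)-\bar\mu(a_0 a_1)$ applied to a test element becomes left multiplication by $a_0\otimes a_1\in A\otimes A\subset K$ (the right coordinate vanishing by the trivial right action), and this is precisely the inner bimultiplication $\epsilon(a_0\otimes a_1)$.

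For part (b), the simplification consists in replacing the general extension $0\to M\to E\to A\otimes A\otimes A^*\to 0$ of Theorem 6.1 by the much smaller split extension $0\to M\to M\oplus(A\otimes A)\to A\otimes A\to 0$ furnished by Lemma 6.1. Consequently, the summands and the multiplication table of $K$ listed in Steps~1--4 of Theorem 6.1 collapse: on the $(A\otimes A)$-summand only the left $A$-action is nontrivial, so the axioms of (3.2)-type become automatic, while on $M$ the actions are those prescribed by $\rho$. The curvature and hindrance then transport the minimal possible data, and $f$ is recovered on the nose via $\Delta|_E h$.

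The step I expect to require the most care is checking that the streamlined algebra $K=M\oplus(A\otimes A)$ remains \emph{associative} and still has biannihilator exactly equal to $M$ (rather than to something strictly larger). In the general construction of Theorem 6.1 the extra factor $A^*$ was inserted precisely to prevent extraneous two-sided annihilators; once it is removed, associativity and the equality $AnniK=M$ have to be re-verified from the new multiplication table, essentially as a consequence of the cocycle identity $\delta^\rho f=0$ combined with the fact that $P_2$ is a free $U$-module on the left. Granting these verifications (which are the substance of Appendix E), parts (a) and (b) then fall out of Lemmas 6.1 and 6.2 without additional work.
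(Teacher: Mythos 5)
There is a genuine gap, and it sits exactly where you chose to defer the work. Your part (b) proposes the streamlined algebra $K=M\oplus(A\otimes A)$, i.e.\ you discard everything except the bimodule $E$ of Lemma 6.1. But the paper's simplified $K$ (Appendix E) is \emph{not} $M+U\otimes U$: it is $K=M+J+U\otimes U$ with the auxiliary summands $J=\mathbb{F}e+\mathbb{F}f+U'+U'\otimes U'$ kept, together with an explicit multiplication table ($e(u_1\otimes u_2)=u_1'\otimes u_2'$, $u'(u_1\otimes u_2)=(uu_1)'\otimes u_2'-u'\otimes(u_1u_2)'$, $e\cdot u=u'$, $u_1'\cdot u=(u_1u)'+u_1'\otimes u'$, etc.). These summands are not removable decoration: without them the internal products of your $K$ all vanish, so every element of $A\otimes A$ is a two-sided annihilator and $AnniK$ is strictly larger than $M$ (indeed all of $K$), violating conclusion (1) of the structure theorem; and since $E$ is an honest $A$-bimodule, the connection you would take as $\bar\mu$ is flat, so $R^\mu=0$ while $\epsilon\circ h$ with $h(a_1\otimes a_2)=a_1\otimes a_2$ would have to be the zero bimultiplication --- the kernel you build realizes the zero obstruction, not the given class. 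Lemma 6.2 only says that $f$ becomes a coboundary after enlarging the coefficients to $E$; it does not by itself give an algebra $K$ with $AnniK=M$ and $\epsilon\circ h=R^\mu$, which is the actual content of Theorem 4.

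Concretely, the paper's proof consists of (i) writing the new table of $K$-multiplications and $A$-actions with $U\otimes U$ in place of $A\otimes A\otimes A^*$ and the shortened $I=U'+U'\otimes U'$; (ii) checking the four compatibility conditions case by case; (iii) checking $(k\cdot w_1)\cdot w_2-k\cdot(w_1w_2)=k(w_1\otimes w_2)$, which is exactly the statement $\epsilon\circ h=R^\mu$ for the reset $h$ (the left defect vanishes, so $K$ is a left module but deliberately not a right one); and (iv) proving $AnniL=0$ by a linear-independence argument that uses the idempotents $e,f$ and the primed copies $U'$, $U'\otimes U'$ in an essential way. Your proposal supplies none of this: no multiplication table is defined, so associativity cannot even be checked, and the claim that the verifications follow from $\delta^\rho f=0$ plus freeness of $P_2$ is not substantiated --- in the paper they follow from the specific choice of $J$ and its products, which you have removed. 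Saying the missing verifications are ``the substance of Appendix E'' concedes the point: the identification of which auxiliary components must survive the simplification, and how they multiply, is the theorem's proof, and it is absent here.
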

\begin{itemize}
\item See \textbf{Appendix E} for its proof. Note that this is the pivotal bridge theorem where we are forwarding to the Lie algebra case.
\end{itemize}

\section{Going to Lie}
One of Shukla's unproved theorem in \cite{Shk66} states the generalized version in terms of DG-Lie algebra. The structure theorem for any ordinary Lie algebra kernels in \cite{Hoch56a} is laconic and thus least readable, so we will clarify his dense writing and present a formal proof in the following paragraphs.

Let us formulate our second main theorem at first:
\begin{thm}
Given any Lie algebra $\mathfrak{g}$ and let $\mathcal{M}$ be any $\mathfrak{g}$-module. Let $\rho_{Lie}: \mathfrak{g} \rightarrow Der(\mathcal{M})$ be a The Lie algebra homomorphism (i.e. a flat $\mathfrak{g}$-connection on $\mathcal{M}$). For any element $\{f\}$ in $H^3(\mathfrak{g}, \mathcal{M}, \rho_{Lie})$, 

1)there exists a Lie algebra $\mathfrak{K}$ having some module structures over $\mathfrak{g}$ and such that $Z\mathfrak{K}=\mathcal{M}$,

2)there exist a homomorphism $\Xi: \mathfrak{g} \rightarrow Out(\mathfrak{K})$ such that the induced central representation $\rho^{\Xi}: \mathfrak{g} \rightarrow Der(Z\mathfrak{K})$ coincides with $\rho_{Lie}$, and

3) $Obs(\Xi)=f$ 
 
Moreover, $(\Xi, \mathfrak{K})$ becomes a (Lie) coupling of $\mathfrak{g}$.
\end{thm}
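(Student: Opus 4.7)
The plan is to bootstrap from the associative structure theorem of Section 6 to its Lie counterpart by routing everything through $U=U(\mathfrak{g})$, exploiting the cochain identification of Section 5. Concretely, set $U=U(\mathfrak{g})$ and regard $\mathcal{M}$ as a unitary $U$-$\mathbf{0}$-bimodule (trivial right action) via the correspondence between $\mathfrak{g}$-modules and unital $U$-modules; this makes $\rho_{Lie}$ into a flat connection $\tilde{\rho}:U\to Mul(\mathcal{M})$ with vanishing right component. Under the quasi-isomorphism $\gamma^{*}:H^{*}_{CE}(\mathfrak{g},\mathcal{M})\xrightarrow{\cong}H^{*}_{Hoch}(U,\mathcal{M})$ of Section 5, the class $\{f\}$ transports to an associative $3$-class $\{\tilde{f}\}\in HH^{3}(U,\mathcal{M},\tilde{\rho})$ whose representative is exactly the antisymmetric extension $\tilde{f}(x_{1}\otimes x_{2}\otimes x_{3})=\sum_{\sigma}\mathrm{sgn}(\sigma)f(x_{\sigma(1)}\wedge x_{\sigma(2)}\wedge x_{\sigma(3)})$.

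Next I would invoke the \emph{simplified} associative structure theorem (the ``trivial right action'' version at the end of Section 6), applied to the triple $(U,\mathcal{M},\tilde{\rho})$ with cocycle $\tilde{f}$. This yields an associative algebra $K$, a bimultiplication law $\bar{\mu}:U\to Mul(K)$ covering a coupling $\bar{\xi}:U\to Out(K)$, together with the pivotal hindrance $\bar{h}(a_{1}\otimes a_{2})=a_{1}\otimes a_{2}$ such that $Anni\,K=\mathcal{M}$, $\rho^{\bar{\xi}}=\tilde{\rho}$, and $Obs(\bar{\xi})=\{\tilde{f}\}$. Now define the Lie algebra $\mathfrak{K}:=(K,[-,-])$ with bracket the commutator in $K$. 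Restrict along the canonical inclusion $\iota:\mathfrak{g}\hookrightarrow U$ to obtain $\Xi:\mathfrak{g}\to Out(\mathfrak{K})$ via $\Xi_{x}(\bar{k})=[\bar{\mu}(\iota x),-]$ (modulo inner derivations); because $\bar{\mu}(\iota x)$ is a bimultiplication of $K$, the commutator $[\bar{\mu}(\iota x),-]$ is a derivation of $(K,[-,-])$, and $\Xi$ is a Lie homomorphism because $\iota$ takes Lie brackets to commutators inside $U$ while $\bar{\xi}$ is associative-multiplicative.

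To finish I would verify the three required properties. For (1)--(2), $Z\mathfrak{K}\supseteq Anni\,K=\mathcal{M}$ is immediate, and the reverse inclusion is forced by the simplified construction: because $K$ is built as a direct sum whose ``nontrivial'' summand is the extension of bimodules from Lemma~6.1 with underlying space $A\otimes A\oplus\mathcal{M}$, the only elements commuting with all of $K$ under $[-,-]$ are those annihilated by both left and right $U$-action, i.e.\ exactly $\mathcal{M}$. The induced central representation on $Z\mathfrak{K}$ is the commutator associated to $\bar{\mu}|_{\mathcal{M}}=\tilde{\rho}$, which on $\mathfrak{g}\subset U$ recovers $x\cdot n=\rho_{Lie}(x)(n)$. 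For (3), apply $\gamma^{*}$ backwards: by construction $\bar{h}\circ\gamma=h^{\Xi}$ is a Lie hindrance for the curvature $R^{\Xi}(x_{1}\wedge x_{2})=[\bar{\mu}(\iota x_{1}),\bar{\mu}(\iota x_{2})]-\bar{\mu}(\iota[x_{1},x_{2}])$, and $\Delta^{\Xi}h^{\Xi}$ is the antisymmetrization of $\Delta^{\bar{\mu}}\bar{h}=\tilde{f}$, which is $f$. Hence $Obs(\Xi)=\{f\}$ in $H^{3}(\mathfrak{g},\mathcal{M},\rho_{Lie})$.

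The main obstacle I expect is the transfer of cocycle machinery across $\gamma^{*}$ at the \emph{cochain} level rather than only in cohomology: one must check that the Lie curvature $R^{\Xi}$ and its hindrance assemble coherently out of the associative data (i.e.\ that antisymmetrizing $\bar{h}$ genuinely produces a valid Lie hindrance, not merely one representative up to a coboundary), and symmetrically that bracketing $\bar{\mu}(\iota x)$'s inside $Mul(K)$ lands in the Lie-inner derivations produced by $h^{\Xi}$. This is precisely where the normalized-complex comparison of Section 5 is essential, and why the theorem requires the ``simplified'' associative structure theorem with trivial right action---otherwise the commutator passage produces unwanted inner-derivation defects that obstruct the identification $\rho^{\Xi}=\rho_{Lie}$. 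The remaining bookkeeping (additivity of $Obs$, uniqueness up to equivalence) then follows from Proposition~6.1 and its Lie analogue.
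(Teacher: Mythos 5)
Your plan follows essentially the same route as the paper's proof: transfer the Lie triple to $(U(\mathfrak{g}),M,f')$ via the Section~5 quasi-isomorphism, invoke the associative structure theorem in its simplified (trivial right action) form to get $(K,\bar\mu,\bar\xi,\bar h)$, pass to $\mathfrak{K}=Lie(K)$ with the commutator, take $\nabla_x=u_x-v_x$ (your $[\bar\mu(\iota x),-]$) and the antisymmetrized hindrance $H=h\circ\gamma$, and check $R^\nabla=ad\circ H$ and $\Delta^\nabla H=f$ --- which are exactly the permutability computations the paper carries out. The checks you flag as remaining obstacles are precisely those computations, so the proposal is correct and matches the paper's argument.
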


\begin{proof}
We are actually beginning with the given Lie triple: 
$$\big(\mathfrak{g}, \mathcal{M}, f \big)$$
where $\in Z^3(\mathfrak{g}, \mathcal{M}, \rho_{Lie})$.
The main technique is to transfer the Lie triple into some proper associative triples and then to go back to Lie by manipulating formulas. First of all, we take $U(\mathfrak{g})$. 

The second important note is the equivalence of categories
\begin{center}
$\textbf{Rep}(\mathfrak{g}) \Leftrightarrow U(\mathfrak{g})$-$\textbf{Mod}$
\end{center}

Let $M$ be a $U(\mathfrak{g})$-module corresponding to the given representation pair $(\mathcal{M}, \rho_{Lie})$.

Thirdly, we will the previous section to get 
\begin{align*}
Hom_{U(\mathfrak{g})} \big(U(\mathfrak{g}) \otimes \bigwedge\nolimits^{\!3} \mathfrak{g}, \mathcal{M}) \big) 
&\rightarrow 
Hom_{U(\mathfrak{g})} \big(U(\mathfrak{g})\otimes U(\mathfrak{g})_+^{\otimes^3}, M) \big) \\
f &\mapsto f'
\end{align*}
such that the induced cohomology groups are isomorphic, that is $$H^3(\mathfrak{g}, \mathcal{M}, \rho_{Lie}) \cong HH^3(U(\mathfrak{g}), M, \rho)$$
where 
\[
\begin{tikzcd}
\mathfrak{g} \ar{r}{\rho_{Lie}} \ar[swap]{d}{\epsilon} &Der(\mathcal{M}) \ar{d} \\
U(\mathfrak{g}) \ar[swap]{r}{\rho} &Mul(M) 
\end{tikzcd}
\]
Therefore, our associative triple is
$$\big(U(\mathfrak{g}), M, f'\big)$$
where $f'\in Z^3(U(\mathfrak{g}), M, \rho)$

Due to the construction of Theorem $1$, our $K$ will be a special combination of $A=U(\mathfrak{g})$ such that 
$$M=ZK:=\{m|m\cdot K=K \cdot m=0\}.$$ 
with the listed multiplications between all possible components of $K$ and $A$-actions. The coupling $\xi$ is therefore fulfilled by this $A$-action. Write 
\begin{align*}
\mu: U(\mathfrak{g}) &\rightarrow Mul(K)  \\
a &\mapsto (u_a,v_a)
\end{align*}
Then $\xi=\natural \circ \mu$ for some proper linear mappings $\theta$ and the following induced central representation(only determined by the given coupling) $\rho^\xi: U(\mathfrak{g}) \rightarrow Mul(ZK)$ which coincides with $\rho$ by Theorem $1$.

Denote $\mathfrak{K}=Lie(K)$ with $[k_1, k_2]=k_1 \cdot k_2-k_1 \cdot k_2$ such that
$$\mathcal{M}=Z\mathfrak{K}:=\{m|[m, K]=0 \}.$$

In the associative case, we know that the nonzero difference $R^\mu(a_1 \otimes  a_2)=\mu_{a_1}\mu_{a_2}-\mu_{a_1 a_2}$ is an inner bimultiplication effected by some bilinear maps $h:U(\mathfrak{g}) \otimes U(\mathfrak{g}) \rightarrow K$. As $h$ has already built in Theorem $1$, we want to define its counterpart $H$ for Lie algebra. More precisely, beginning with the covering-hindrance pair
$$(\mu, h) \quad \text{with} \quad \epsilon \circ h = R^\mu$$
we want to define the Lie-pair
$$(\nabla, H) \quad \text{with} \quad ad \circ H = R^\nabla$$

For any $\mathfrak{a} \in \mathfrak{g}$, we set 
\begin{align*}
\nabla:\mathfrak{g} & \rightarrow Der(\mathfrak{K}) \\
\mathfrak{a} & \mapsto  u_{\mathfrak{a}}-v_{\mathfrak{a}} 
\end{align*}
Recall that $\epsilon \circ h=(u_{h(a_1 \otimes a_2)}, v_{h(a_1 \otimes a_2)})$ is an inner bimultiplication produced by $h(a_1 \otimes a_2)$, so for any $k \in K$ we get
\begin{align*}
\epsilon \circ h (k) &= R^\mu (K)      \\
(u_{h(a_1 \otimes a_2)}, v_{h(a_1 \otimes a_2)})(k)
&=(\mu_{a_1}\mu_{a_2}-\mu_{a_1 a_2})(k)  \\
\Big(h(a_1\otimes a_2)k, kh(a_1\otimes a_2)\Big)
&=\Big( a_1\cdot(a_2\cdot k)-(a_1a_2)\cdot k, (k\cdot a_1)\cdot a_2-k\cdot(a_1a_2) \Big)
\end{align*}
\[
\begin{tikzcd}
U(\mathfrak{g})\otimes U(\mathfrak{g}) \ar{r}{h} 
& K \ar{d}{i} \\
\mathfrak{g} \wedge \mathfrak{g} \ar{u}{\gamma} \ar[swap, dashed]{r}{H} 
&\mathfrak{K}
\end{tikzcd}
\]
Then $H:=i \circ h \circ \gamma$, where $\gamma(\mathfrak{a_1} \wedge \mathfrak{a_2})=a_1 \otimes  a_2 -a_2 \otimes a_1$.
\[
\begin{split}
\Delta^\nabla H(\mathfrak{a_1}  \wedge \mathfrak{a_2} \wedge \mathfrak{a_3})
&=\Delta^\nabla \circ i \circ h \circ \gamma (\mathfrak{a_1}  \wedge \mathfrak{a_2} \wedge \mathfrak{a_3})  \\
&={i^*}\Delta^\nabla \circ h \Big(\sum_{\sigma}(sgn\sigma) a_{\sigma_{(1)}} \otimes a_{\sigma_{(2)}} \otimes a_{\sigma_{(3)}} \Big) \\
&=\Delta^\mu h \Big(\mathfrak{S}\{a_1 \otimes a_2 \otimes a_3 \} \Big) \\
&=\Big(0, \mathfrak{S}\{f'(a_1 \otimes a_2 \otimes a_3) \} \Big)\\
&=\Big(0, f(\mathfrak{a_1}  \wedge \mathfrak{a_2} \wedge \mathfrak{a_3}) \Big)
\end{split}
\]

The curvature map for Lie algebra 
$$R^\nabla: \mathfrak{g} \wedge \mathfrak{g} \rightarrow ad(\mathfrak{K})$$
is given by $R^\nabla(\mathfrak{a_1} \wedge \mathfrak{a_2})=[\nabla_\mathfrak{a_1}, \nabla_\mathfrak{a_2}]-\nabla_{[\mathfrak{a_1},\mathfrak{a_2}]}$. 

On the other hands, we have $R^\nabla=ad \circ H$. In details, 
\[
\begin{split}
R^\nabla(\mathfrak{a_1} \wedge \mathfrak{a_2})
&=ad \circ i \circ h \circ \gamma(\mathfrak{a_1} \wedge \mathfrak{a_2}) \\
&=ad \circ i \circ h(a_1 \otimes a_2 -a_2 \otimes a_1) \\
\end{split}
\]
For any $\mathfrak{k} \in \mathfrak{K}$,
\[
\begin{split}
ad_{i \circ h(a_1 \otimes a_2 -a_2 \otimes a_1)}(\mathfrak{k}) 
&=[i \circ h(a_1 \otimes a_2 -a_2 \otimes a_1), \mathfrak{k}] \\
&=[i \circ h(a_1 \otimes a_2), \mathfrak{k}]-[i \circ h(a_2 \otimes a_1), \mathfrak{k}] \\ 
&=\Big(h(a_1 \otimes a_2) k-k h(a_1 \otimes a_2) \Big)-\Big(h(a_2 \otimes a_1) k-k h(a_2 \otimes a_1) \Big)  \\
\text{in addition,}
&=(u_{h(a_1 \otimes a_2)}-v_{h(a_1 \otimes a_2)})-(u_{h(a_2 \otimes a_1)}-v_{h(a_2 \otimes a_1)})(k) \\
&=(u_{h(a_1 \otimes a_2)-h(a_2 \otimes a_1)},v_{h(a_1 \otimes a_2)-h(a_2 \otimes a_1)})(k) \\
&=(u_{H(\mathfrak{a_1} \wedge \mathfrak{a_1})},v_{H(\mathfrak{a_1} \wedge \mathfrak{a_1})})(k) \\
\end{split}
\]


By the definition of $\nabla$, we have
\begin{align*}
\nabla_\mathfrak{\mathfrak{a_1}} \nabla_\mathfrak{\mathfrak{a_2}}(\mathfrak{k})
&=(u_{\mathfrak{a_1}}-v_{\mathfrak{a_1}})(u_{\mathfrak{a_2}}-v_{\mathfrak{a_2}})(\mathfrak{k}) \\
&=u_{\mathfrak{a_1}}u_{\mathfrak{a_2}}-\mathbf{u_{\mathfrak{a_1}}v_{\mathfrak{a_2}}}-\mathbf{v_{\mathfrak{a_1}}u_{\mathfrak{a_2}}}+v_{\mathfrak{a_1}}v_{\mathfrak{a_2}}  (\mathfrak{k})  \\
\nabla_\mathfrak{\mathfrak{a_2}} \nabla_\mathfrak{\mathfrak{a_1}}(\mathfrak{k}) 
&=(u_{\mathfrak{a_2}}-v_{\mathfrak{a_2}})(u_{\mathfrak{a_1}}-v_{\mathfrak{a_1}})(\mathfrak{k})   \\
&=u_{\mathfrak{a_2}}u_{\mathfrak{a_1}}-\mathbf{u_{\mathfrak{a_2}}v_{\mathfrak{a_1}}}-\mathbf{v_{\mathfrak{a_2}}u_{\mathfrak{a_1}}}+v_{\mathfrak{a_2}}v_{\mathfrak{a_1}}  (\mathfrak{k})  \\
\nabla_{[\mathfrak{\mathfrak{a_1}},\mathfrak{\mathfrak{a_2}}]} (\mathfrak{k})
&=u_{[\mathfrak{a_1},\mathfrak{a_2}]}-v_{[\mathfrak{a_1},\mathfrak{a_2}]} (\mathfrak{k})  \\
&=u_{\mathfrak{a_1}\otimes \mathfrak{a_2}-\mathfrak{a_2} \otimes \mathfrak{a_1}}-v_{\mathfrak{a_1}\otimes \mathfrak{a_2}-\mathfrak{a_2} \otimes \mathfrak{a_1}} (\mathfrak{k})    \\
&=u_{\mathfrak{a_1} \mathfrak{a_2}}-u_{\mathfrak{a_2} \mathfrak{a_1}}-v_{\mathfrak{a_1} \mathfrak{a_2}}+v_{\mathfrak{a_2} \mathfrak{a_1}} (\mathfrak{k}) 
\end{align*}

The last two lines hold for referring $[x \otimes y]-x\otimes y-y \otimes x$ in the canonical ideal and then abuse the tensor notation. When grouping these three nablas, the middle four bold parts are cancelled because of the permubalility condition $(a_1 \cdot k)\cdot a_2=(a_2 \cdot k)\cdot a_1$ and vice versa. Equivalently, $u_{a_1}v_{a_2}=v_{a_2}u_{a_1}$ and $u_{a_2}v_{a_1}=v_{a_1}u_{a_2}$. 

Therefore, we have
\[
\begin{split}
R^\nabla(\mathfrak{\mathfrak{a_1}} \wedge \mathfrak{\mathfrak{a_2}})(\mathfrak{k}) 
&= \big( \nabla_\mathfrak{\mathfrak{a_1}} \nabla_\mathfrak{\mathfrak{a_2}}-\nabla_\mathfrak{\mathfrak{a_2}} \nabla_\mathfrak{\mathfrak{a_1}}-\nabla_{[\mathfrak{\mathfrak{a_1}},\mathfrak{\mathfrak{a_2}}]} \big)(\mathfrak{k}) \\ 
&=u_{\mathfrak{a_1}}u_{\mathfrak{a_2}}+v_{\mathfrak{a_1}}v_{\mathfrak{a_2}}-u_{\mathfrak{a_2}}u_{\mathfrak{a_1}}-v_{\mathfrak{a_2}}v_{\mathfrak{a_1}}-u_{\mathfrak{a_1} \mathfrak{a_2}}+u_{\mathfrak{a_2} \mathfrak{a_1}}+v_{\mathfrak{a_1} \mathfrak{a_2}}-v_{\mathfrak{a_2} \mathfrak{a_1}} (\mathfrak{k})\\ 
&=\Big(u_{\mathfrak{a_1}}u_{\mathfrak{a_2}}-u_{\mathfrak{a_1} \mathfrak{a_2}}-(v_{\mathfrak{a_2}}v_{\mathfrak{a_1}}+v_{\mathfrak{a_1} \mathfrak{a_2}}) \Big)-\Big(u_{\mathfrak{a_2}}u_{\mathfrak{a_1}}-u_{\mathfrak{a_2} \mathfrak{a_1}}-(v_{\mathfrak{a_1}}v_{\mathfrak{a_2}}+v_{\mathfrak{a_2} \mathfrak{a_1}}) \Big) (\mathfrak{k}) \\ 
&=\Big(h(\mathfrak{a_1} \otimes \mathfrak{a_2}) \mathfrak{k}-\mathfrak{k} h(\mathfrak{a_1} \otimes \mathfrak{a_2}) \Big)-\Big(h(\mathfrak{a_2} \otimes \mathfrak{a_1}) \mathfrak{k}-\mathfrak{k} h(\mathfrak{a_2} \otimes \mathfrak{a_1}) \Big) 
\end{split}
\]

As two results coincide, we conclude that based on our definition of $\nabla$ and $R^\nabla$, we have found a proper $H$ derived from its associative counterpart $h$ such that $R^\nabla=ad \circ H$ as desired.
\[
\begin{tikzcd}
\mathfrak{g} \ar[bend right, swap]{ddd}{\iota} \ar[swap]{drr}{\nabla} \ar[bend left=3]{drrr}{\Xi}   
&& ad(\mathfrak{K}) \ar{d}  \\
Z\mathfrak{K} \ar{r} 
&\mathfrak{K} \ar{r} \ar[bend left, dashed]{ur}
&Der \big(\mathfrak{K} \big) \ar[swap]{r}{\natural'} 
&Out \big(\mathfrak{K} \big) \\
ZK \ar{r} \ar{u} 
&K \ar{r} \ar{u}{Lie} \ar[bend right, dashed]{dr}
&Mul(K) \ar{r}{\natural} \ar{u} 
&Out(K) \ar{u} \\
U(\mathfrak{g}) \ar[bend right=3, swap]{urrr}{\xi} \ar{urr}{\theta}
&& Inn(K) \ar{u}
\end{tikzcd}
\]
For some proper $\nabla$, our Lie coupling $\Xi$ can be viewed as the composition $\natural \circ \nabla$. We set $F^\Xi:=F(\nabla, H)$ then by the argument above we have $F(\nabla, H) = F(\mu, h)$, while the latter coincides with the \textit{a priori} associative cocycle $f'$ by Theorem $1$. Again $f' \mapsto f$ is an isomorphism onto the Lie cocycle. Consequently, following with all of the equalities, we get
$$F^\Xi=f$$ 
\[
\begin{tikzcd}
F(\nabla, H) \ar{r}{\text{above}} \ar[dashed]{d} &F(\mu, h) \ar{d}{\text{Theorem 1}} \\
f & f' \ar{l}{\text{Lie-Asso id.}} 
\end{tikzcd}
\]
\[
\begin{tikzcd}
&\mathfrak{g} \ar[bend right, swap]{ddd}{\iota} \ar[swap]{drr}{\nabla} \ar{drrr}{\Xi} \ar[swap]{dl}{\rho^{\Xi}} \\
Der(\mathcal{M}) 
&\mathcal{M} \ar{r} \ar[dashed]{l} 
&\mathfrak{K} \ar[swap]{r} 
&Der \big(\mathfrak{K} \big) \ar[swap]{r}{\natural} & Out \big(\mathfrak{K} \big)           \\
Mul(M) \ar{u} 
& M \ar{u} \ar{r} \ar[dashed]{l} 
&K \ar[swap]{r} \ar{u} 
&Mul(K) \ar{r}{\natural} \ar{u} 
&Out(K) \ar{u}                \\
&U(\mathfrak{g}) \ar[swap]{urrr}{\xi} \ar{urr}{\theta} \ar{ul}{\rho^{\xi}}
\end{tikzcd}
\]
\end{proof}

\section*{Appendix A: Generalities on Associative Algebras}
\addcontentsline{toc}{section}{Appendix A: Generalities on Associative Algebras}

The following couple of appendices aim to give a glossary about associative algebra and Hochschild cohomology compatible with this paper. A lot of its classical knowledge occurs in \cite{CE56}, \cite{Redo01} and in even those very old \cite{CG} and \cite{CT}.

Let $A$ be an associative algebra over any unital commutative ring $R$. The tensor product of two $R$-algebras $A$ and $B$ is $A \otimes_R B$ with an associative multiplication defined by $(a_1 \otimes b_1)(a_2 \otimes b_2)=(a_1 a_2)\otimes (b_1 b_2)$. A $R$-algebra homomorphism is both ring homomorphism and module homomorphism over $R$.  

Any $A$-$A$-bimodule $M$ is also a $R$-module in the following way: for any $r \in R$, since $(ra)\cdot m \in A$, then $(r_1 a_1) \big((r_2 a_2)\cdot m \big)=\big((r_1 a_1)(r_2 a_2) \big) \cdot m = (r_1 r_2) \big((a_1 a_2) \cdot m \big) \in A$; and similar for the right operation. Moreover, we can define a left $A \otimes_R B$-module in the following way: for any $a \in A$ and $b\in B$, let $(a \otimes b)\cdot m=a(b\cdot m)=b(a \cdot m)$. 

If $M$ is a $A$-$B$-bimodule, then it can be seen as a left $A \otimes_R B^{op}$-module in the following way: $(a \otimes b^*)m=a(b^*m)= b^*(am)$, where the opposite operation is given by $a^* m := m a$. Applying the star-operation, we just get the usual bimodule condition $a(mb)=(am)b$. In this way, it is possible to identify any $A$-$A$-bimodule with the left $A \otimes_R A^{op}$-module. Write $A^e := A \otimes_R A^{op}$ to be the evenloping algebra of $A$. This is again a $R$-algebra with the same multiplication defined as above. Lemma 2.1 from [Red00] tells us that the category of $A$-$A$-\textbf{bimodule} is equivalent to the category of left $A^e$-\textbf{module}. Therefore, our bimodule $M$ becomes a left $A^e$-module(as well as a $R$-module).

Let us recall the classical chain complex heading to the Hochschild cohomology. 

For all $n \geq 0$, let $S_n(A)=A \otimes_R A^{\otimes^n} \otimes_{R} A$ be the $(n+2)$-folds tensor product over $R$ of $R$-algebra $A$. It is an $A$-$A$-bimodule in a natural way, so it can be seen as an $A^e$-bimodule. The map $b_n: S_n(A) \rightarrow S_{n-1}(A)$ defined by
\[
b_n(a_1 \otimes \cdots \otimes a_n)=\sum_{i=0}^{n} (-1)^i a_1 \otimes \cdots \otimes a_i a_{i+1} \otimes \cdots \otimes a_n
\]
is an $A$-$A$-bimodule morphism and thus an $A^e$-module morphism. Let $S_{-1}(A)=A$ and let $b_0=\epsilon: S_0(A) \rightarrow S_{-1}(A)$ such that $\epsilon(a \otimes a')=aa'$ be the augmentation(also bimodule morphism). Then
\[
\begin{tikzcd}
0 & S_{-1}(A) \ar[swap]{l}
  & S_0(A) \ar[swap]{l}{\epsilon}  
	& S_1(A) \ar[swap]{l}{b_1} 
	& S_2(A) \ar[swap]{l}{b_2}  
	& \cdots \ar[swap]{l}{b_3}
\end{tikzcd}
\]
\[
\begin{tikzcd}
0 & A \ar[swap]{l}
  & A \otimes_R A \ar[swap]{l}{\epsilon}  
	& A \otimes_R A \otimes_R A \ar[swap]{l}{b_1} 
	& A \otimes_R A^{\otimes^2} \otimes_R A \ar[swap]{l}{b_2}  
	& \cdots \ar[swap]{l}{b_3}
\end{tikzcd}
\]
forms an acyclic complex. Indeed, consider the map $s: S_n(A) \rightarrow S_{n+1}(A)$ with $s(x)=1 \otimes x$. One can easily check that $b_n s +s b_{n-1}=id_{A^{\otimes^{n+1}}}$ and $b_0 s=id_A$. Additionally, we have $b^2=0$. If $A$ is $R$-projective, then $A^{\otimes^n}$ is also $R$-projective and $S_n(A)$ becomes $A^e$-projective. The projective resolution $(S(A), b)$ in above sense is called the \textbf{\textit{standard complex}}  or \textbf{\textit{bar resolution}} of $A$. 

Next, by chopping off the first nonzero term and applying the contravariant functor $Hom_{A^e}(-, M)=Hom_{A\otimes_{R} A^{op}}(-,M)$ to the chain resolution, we reach a cochain complex
\[
\begin{tikzcd}
0 \ar{r} & Hom_{A^e}\big(S_0(A), M \big) \ar{r}
         & Hom_{A^e}\big(S_1(A), M \big) \ar{r} 
	       & Hom_{A^e}\big(S_2(A), M \big) \ar{r} & \cdots
\end{tikzcd}
\]
of mere left-exactness. Now consider the following form 
$$S_n(A)=A \otimes_R A^{\otimes^n} \otimes_R A 
       \cong A \otimes_R \tilde{S}_n(A) \otimes_R A 
			 \cong A^e \otimes_R \tilde{S}_n(A),$$ 
where $\tilde{S_n}(A)$ is the $n$-folds tensor product of $A$ for all $n \geq 1$ and put $\tilde{S_0}(A)=R$. Then the hom functor gives  
$$ Hom_{A^e} \big(S_n(A), M \big) 
   \cong Hom_{A^e} \big(A^e \otimes_R \tilde{S}_n(A), M \big)
	 \cong Hom_R(\tilde{S_n}(A), M). $$
Namely,
\[
\begin{tikzcd} 
\cdots \ar{r} & Hom_{A^e}\big(S_n(A), M \big) \ar{r}{b_{n-1}^*} \ar{d}{\cong,\varphi} 
              & Hom_{A^e}\big(S_{n+1}(A), M \big) \ar{r} \ar{d}{\cong, \varphi} 
							& \cdots \\
\cdots \ar{r} & Hom_R \big(\tilde{S}_n(A), M \big) \ar[swap]{r}{\delta_n} 
              & Hom_R \big(\tilde{S}_{n+1}(A), M \big) \ar{r} 
							& \cdots \\
\end{tikzcd}
\]
where $\varphi: f \mapsto \tilde{f}$ with $f(x)=\tilde{f}(1 \otimes x \otimes 1)$ and $b^*\circ f=f\circ b$. In particular, the first few entries are:
\[
\begin{tikzcd} 
0 \rar &Hom_{A^e}(A\otimes_R A, M) \rar \dar 
       &Hom_{A^e}(A\otimes_R A \otimes_R A, M) \rar  \dar
       &\cdots  \\
0 \rar &Hom_{R}(R, M) \rar 
       &Hom_{R}(A, M) \rar						
       &\cdots  
\end{tikzcd}
\]
One can check that the above diagram is commutative and actually define the formula of $\delta_n: Hom_R \big(\tilde{S}_n(A), M \big) \rightarrow Hom_R \big(\tilde{S}_{n+1}(A), M \big)$ for each $n$. Generally, the coboundary operator is 
\begin{align*}
\delta_n(f)&(a_1 \otimes \cdots \otimes a_{n+1})
:=a_1 \cdot f(a_2 \otimes \cdots \otimes a_{n+1})  \\
&+\sum_{i=1}^{n}(-1)^{i}f(a_1 \otimes \cdots \otimes a_i a_{i+1} \otimes \cdots \otimes a_{n+1})+(-1)^{n+1}f(a_1 \otimes \cdots \otimes a_n) \cdot a_n
\end{align*}

Define the \textbf{\textit{$i^{th}$-Hochschild cohomology}} of $A$ with coefficients in an $A$-$A$-bimodule (here only consider its $R$-module structure) $M$:
$$HH^n(A, M):=H^* \big( Hom_R (\tilde{S}_n(A), M) \big)  $$
When using the projective resolution, we can make an alternative definition with Ext functor involved:
$$ HH^n(A, M):=Ext_{A^e}^i(A, M) $$
One may read [Dowdy69] and \cite{Car-E} for more detailed construction.

An elegant treatment on the interchange of these two types of cochains is the derived functor approach. 

\section*{Appendix B: Derived Functor Approach}
\addcontentsline{toc}{section}{Appendix B: Derived Functor Approach}

Given any $R$-algebra $A$ and let $M$ be an $A$-$A$-bimodule. In the spirit of part \textbf{3}, a \textbf{\textit{representation}} of $A$ on $M$ is a pair $(M, \rho)$ where $\rho: A \rightarrow Mul(M) $ is a $R$-algebra homomorphism. The set of representations of $A$ forms a category, $\mathbf{Rep}(A)$. For any $(M, \rho)$, we define an invariant sub(bi)module

$$M^{A^e}:= \{m \in M| {\rho_a} m-m {\rho_a}, \forall a\in A \} $$

Generally, this defines a functor:
\begin{center} 
$(-)^{A^e}: \textbf{Rep}(A)$ $\rightarrow R$-$\textbf{Mod},$  
\end{center} 
alternatively, we can express it as 
\begin{center} 
$^{A}(-)^{A}: A$-$A$-$\textbf{Bimod} \rightarrow R$-$\textbf{Mod}$
\end{center}  
such that, when writing $\rho_a m=am$ and $m \rho_a=ma$, 
$$ ^{A}M^{A}=\{ m|am-ma, \forall a\in A \},$$
for \textit{any} bimodule $M$. 

\textbf{The standard complex of $A$ with coefficients in a representation $(M, \rho)$} is $Hom_R(\tilde{S}_n(A), M)$ together with a $R$-linear differential $\delta_n^{\rho}: Hom_R(\tilde{S}_n(A), M) \rightarrow Hom_R(\tilde{S}_{n+1}(A), M)$ given by
\begin{align*}
\delta^\rho_n(f)&(a_1\otimes \cdots\otimes a_{n+1})
=u_{a_1}f(a_2\otimes \cdots \otimes a_{n+1})  \\
&+\sum_{i=1}^n (-1)^i f(a_1 \otimes \cdots \otimes a_i a_{i+1}\otimes \cdots \otimes a_{n+1}) +(-1)^{n+1}v_{a_{n+1}}f(a_1\otimes \cdots \otimes a_n),
\end{align*}
for any $a \in A$ and by indicating $(\rho_a m, m\rho_a)=(u_a m, v_a m)$. Write $\delta=\delta^\rho$ for short. 

We can easily verify that the functor treated above actually pushes the original Hochschild cochains forward to:
$$^{A}{\big( Hom_R(\tilde{S}_n(A), M) \big)}^{A}=Hom_R(S_n(A), M) $$
for all $n \geq 0$
Therefore, we have the \textbf{$i^{th}$-Hochschild cohomology group with coeffcients in a representation} to be the right derived functor:
$$HH^i(A, M, \rho):=\textbf{\textit{R}} {^{A}(-)^{A}}.$$
We shall be always highlighting the involved representations in the classical definitions of Hochschild cohomology and Lie algebra cohomology.


\section*{Appendix C: Zero Obstructions}
\addcontentsline{toc}{section}{Appendix C: Zero Obstructions}

Assuing we have an extension of associative algebras $(B, \beta)$ of $K$ by $A$. All essentials are pictured in the following diagram:
\[
\begin{tikzcd}
  &0 \ar{r} &Inn(K) \ar[tail]{r}{j}  &Mul(K) \ar[two heads]{r}{\natural} &Out(K) \ar{r} &0 \\
	&0 \ar{r} &K \ar[tail]{r}{\alpha} \ar{u}{\epsilon} &B \ar[two heads]{r}{\beta} \ar{u} &A \ar{r} \ar[bend left, yshift=0.6ex]{l}{\gamma} \ar{u}{\xi} \ar[swap, dashed]{ul}{\mu^\gamma} &0 \\
	&&&&& A\otimes A \ar[swap]{uul}{R^\xi} \ar[bend left, dashed, yshift=0.3ex]{ulll}{R^\gamma}
\end{tikzcd}
\]

Maps $\alpha, \beta, \epsilon, j$ and $\natural$ are morphisms of algebras.

Define $\mu^\gamma_a: A \rightarrow Mul(K)$ such that 
\[
\mu^\gamma_a(k)=\alpha^{-1} \big(\gamma(a)\cdot \alpha(k), \alpha(k)\cdot \gamma(a)\big)
\]

Or equivalently,

\[
\alpha (\mu^\gamma_a)=\big(\gamma(a)^1, \gamma(a)^2 \big)
\]

\begin{lem}
For the curvature $R^{\mu^\gamma}: A\otimes A \rightarrow Inn(K)$, there exists a unique? lift $R^\gamma$ such that 
\[
R^{\mu^\gamma}=\epsilon \circ R^\gamma
\]
\end{lem}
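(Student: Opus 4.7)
The plan is to build $R^\gamma$ directly from the failure of the linear section $\gamma$ to preserve multiplication, and then check that $\epsilon$ sends this back to the curvature $R^{\mu^\gamma}$. First I would observe that for any $a_1,a_2 \in A$ the element $\gamma(a_1)\gamma(a_2) - \gamma(a_1 a_2) \in B$ is annihilated by $\beta$ (since $\beta\gamma = id_A$ and $\beta$ is an algebra map), hence lies in $\ker\beta = \operatorname{im}\alpha$. Injectivity of $\alpha$ then lets me set
\[
R^\gamma(a_1 \otimes a_2) := \alpha^{-1}\bigl(\gamma(a_1)\gamma(a_2) - \gamma(a_1 a_2)\bigr) \in K,
\]
which is the natural candidate, as it records the defect of $\gamma$ from being a homomorphism pushed back into $K$.

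Next I would verify $\epsilon \circ R^\gamma = R^{\mu^\gamma}$ by comparing both sides coordinate-wise on a test element $k \in K$. Unfolding the definition $\mu^\gamma_a(k) = \alpha^{-1}(\gamma(a)\cdot\alpha(k),\,\alpha(k)\cdot\gamma(a))$, the first coordinate of $(\mu^\gamma_{a_1}\mu^\gamma_{a_2} - \mu^\gamma_{a_1 a_2})(k)$ works out to
\[
\alpha^{-1}\bigl(\gamma(a_1)\gamma(a_2)\alpha(k)\bigr) - \alpha^{-1}\bigl(\gamma(a_1 a_2)\alpha(k)\bigr) = \alpha^{-1}\bigl((\gamma(a_1)\gamma(a_2) - \gamma(a_1 a_2))\alpha(k)\bigr),
\]
and because $\alpha$ is an algebra homomorphism this simplifies to $R^\gamma(a_1 \otimes a_2)\cdot k$, which is exactly the left action of the inner bimultiplication $\epsilon(R^\gamma(a_1 \otimes a_2))$ on $k$. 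The right-multiplication coordinate is symmetric. As a by-product this confirms that $R^{\mu^\gamma}$ genuinely factors through $Inn(K) \subset Mul(K)$, matching the arrow in the diagram.

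On the parenthesized uniqueness: if $R'$ is any other lift, then $\epsilon\circ(R^\gamma - R') = 0$, so $R^\gamma - R'$ takes values in $\ker\epsilon = \operatorname{Anni}K$. Hence $R^\gamma$ is unique only modulo the biannihilator, which I suspect is why the author hedges with ``unique?''; strict uniqueness would require $\operatorname{Anni}K = 0$. I do not expect a real obstacle anywhere in the argument — the substantive content is just bookkeeping between the ambient extension $B \supset \alpha(K)$ and the pair structure on $Mul(K) = (\operatorname{End}K, \operatorname{End}K^{op})$, with the algebra-homomorphism property of $\alpha$ on $\operatorname{im}\alpha$ doing all the work.
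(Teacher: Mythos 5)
Your proposal is correct and follows essentially the same route as the paper: you take $R^\gamma(a_1\otimes a_2)=\gamma(a_1)\gamma(a_2)-\gamma(a_1a_2)$, pulled back into $K$ via $\alpha$, and verify $\epsilon\circ R^\gamma=R^{\mu^\gamma}$ coordinate-wise from the definition of $\mu^\gamma$, exactly as in the paper's computation under $\alpha$. Your version is in fact slightly tidier, since you justify that the defect lands in $\ker\beta=\operatorname{im}\alpha$ (rather than appealing to surjectivity of $\epsilon$ onto $Inn(K)$) and you settle the ``unique?'' hedge by noting the lift is unique only modulo $\ker\epsilon=\operatorname{Anni}K$.
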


\begin{proof}
Since $R^{\mu^\gamma}$ takes values in $Inn(K)$ and $\epsilon$ is an epimorphism, the existence of $R^\gamma: A\otimes A \rightarrow K$ follows as before. Under the morphism of $\alpha$, we have
\begin{align*}
\alpha \bigl(R^{\mu^\gamma}(a_1 \otimes a_2)\bigr)&=\alpha (\mu^\gamma(a_1) \mu^\gamma(a_2)-\mu^\gamma(a_1 a_2))   \\
&=\bigl(\gamma^1(a_1)\gamma^1(a_2)-\gamma^1(a_1 a_2),\gamma^2(a_1)\gamma^2(a_2)-\gamma^2(a_1 a_2)\bigr)
\end{align*}
Define $R^\gamma(a_1\otimes a_2)=\gamma(a_1)\gamma(a_2)-\gamma(a_1 a_2)$. Then $\epsilon \circ R^\gamma=(R^{\gamma^1}, R^{\gamma^2})=(\gamma^1 \gamma^1-\gamma^1, \gamma^2 \gamma^2-\gamma^2)=R^{\mu^\gamma}$.
\end{proof}

Denote $h^\gamma=R^\gamma$. This is the hindrance determined by $\gamma$ which does not clearly display in [Hoch46].

Another way to introduce the hindrance above is to use the notion of \emph{produced connection} in $Mul(K)$. See more in [Mack05]. For $a\in A$, if $\gamma$ is a connection in $A$, then there exists a linear mapping $\mu^\gamma$ such that $\epsilon \circ \gamma =\mu^\gamma$. So we have 
\begin{align*}
&\epsilon \circ h^\gamma(a_1\otimes a_2)=\mu^\gamma (a_1) \mu^\gamma(a_2)-\mu^\gamma(a_1 a_2)=R^{\mu^\gamma} (a_1\otimes a_2)   \\
&\epsilon \circ h^\gamma(a_1\otimes a_2)=\epsilon \circ \gamma (a_1) \epsilon \circ \gamma(a_2)-\epsilon \circ \gamma (a_1 a_2)
\end{align*}

Hence we have $h^\gamma (a_1\circ a_2)=\gamma (a_1)\gamma (a_2)-\gamma(a_1 a_2)$ to be the hindrance to our coupling in a same way. 

\begin{prop}
Coupling $\xi$ does not depend on any particular choice of the linear mapping $\gamma$. Every extension uniquely determines a bimultiplication law that covers $\xi$.
\end{prop}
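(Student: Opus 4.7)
My plan is to prove the two assertions in the order they are stated. For the first, I would start with two linear splittings $\gamma,\gamma':A\rightarrow B$ of $\beta$. Since $\beta(\gamma(a)-\gamma'(a))=0$ for every $a\in A$, the difference lies in $\ker\beta=\alpha(K)$, so there is a uniquely determined linear map $l:A\rightarrow K$ with $\gamma'(a)=\gamma(a)+\alpha(l(a))$. Plugging this into the defining relation $\alpha(\mu^{\gamma'}_a(k))=\gamma'(a)\alpha(k)=\gamma(a)\alpha(k)+\alpha(l(a))\alpha(k)=\alpha(\mu^{\gamma}_a(k))+\alpha(l(a)\cdot k)$ and using that $\alpha$ is injective, one gets $\mu^{\gamma'}_a=\mu^{\gamma}_a+(u_{l(a)},v_{l(a)})=\mu^{\gamma}_a+\epsilon(l(a))$. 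Thus $\mu^{\gamma'}-\mu^{\gamma}$ takes values in $Inn(K)$, so $\natural\circ\mu^{\gamma'}=\natural\circ\mu^{\gamma}$ and the coupling $\xi$ is independent of the chosen transversal.

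For the second assertion, fix any splitting $\gamma$ and check that the produced linear map $\mu^\gamma:A\rightarrow Mul(K)$ is a bimultiplication law covering $\xi$. Linearity of $\mu^\gamma$ is inherited from $\gamma$. That the pair $(u_a,v_a)=\mu^\gamma_a$ actually lies in $Mul(K)$ amounts to the three identities $k_1 u_a(k_2)=v_a(k_1)k_2$, $u_a(k_1k_2)=u_a(k_1)k_2$ and $v_a(k_1k_2)=k_1 v_a(k_2)$; each one translates after applying $\alpha$ to an instance of associativity in $B$, using only that $\alpha(K)$ is a two-sided ideal of $B$. Regularity (permutability of the image) follows by the same mechanism: $v_{a'}u_a(k)$ and $u_a v_{a'}(k)$ both correspond under $\alpha$ to $\gamma(a)\alpha(k)\gamma(a')$. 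Finally, the diagram commutes by the very definition of $\xi$ as the composite $\natural\circ\mu^\gamma$ on the algebra level; in particular, Lemma~C.1 already produced the lift $R^\gamma$ of $R^{\mu^\gamma}$ to $Inn(K)$, witnessing that $\natural\circ R^{\mu^\gamma}=0$ and so $\xi$ is a bona fide algebra homomorphism into $Out(K)$.

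Combining the two steps, every extension selects a law $\mu^\gamma$ covering the canonical coupling $\xi$, and any two choices of $\gamma$ differ only by an $\epsilon\circ l$, which is the precise ambiguity already allowed for laws covering a fixed coupling by Lemma~4.1. The word ``uniquely'' in the statement is therefore to be read as: the coupling $\xi$ itself is canonically attached to the extension, and the law $\mu^\gamma$ is unique modulo the inner-bimultiplication indeterminacy $\epsilon\circ l$ coming from the choice of splitting.

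The main technical obstacle is the verification of the three bimultiplication axioms and the permutability of $\mu^\gamma_a$; all of these are formal consequences of associativity in $B$ together with $\alpha(K)\trianglelefteq B$, but they need to be carried out carefully because $\mu^\gamma_a$ is defined only implicitly via $\alpha$, so one must systematically move between computations inside $B$ and inside $K$ using $\alpha^{-1}$ on the image of $\alpha$. Once this bookkeeping is in place, the rest of the argument is a direct translation of the defining formulas.
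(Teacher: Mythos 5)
Your argument for the first assertion is exactly the paper's: write $\gamma'=\gamma+\alpha\circ l$, observe that $\mu^{\gamma'}-\mu^{\gamma}=\epsilon\circ l$ lands in $Inn(K)$, and conclude $\natural\circ\mu^{\gamma'}=\natural\circ\mu^{\gamma}=\xi$. You go beyond the paper's proof in the second half by actually checking that $\mu^\gamma$ satisfies the bimultiplication axioms, is regular, and that $\xi=\natural\circ\mu^\gamma$ is multiplicative (via the lift of $R^{\mu^\gamma}$), details the paper leaves implicit from its earlier definition of $\mu^\gamma$; your reading of ``uniquely'' as uniqueness of the coupling together with uniqueness of the law modulo the inner ambiguity $\epsilon\circ l$ is the sensible one and is consistent with how the paper uses the statement afterwards.
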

\begin{proof}
Let $\gamma'$ be another linear mapping of $A$ into $B$ such that $\beta\gamma'=id_A$. We would like to show that $\natural \circ \mu^{\gamma'}=\natural \circ \mu^\gamma$. To do this, write $\gamma'=\gamma+\alpha \circ l$ for some maps $l:A\rightarrow K$. Then passing through the surjection $\epsilon$ and $\gamma$ we have
\begin{align*}
\natural(\mu^{\gamma'})
&=\natural \circ (\epsilon \circ \gamma') \\
&=\natural \circ \epsilon(\gamma+\alpha \circ l)  \\
&=\natural(\mu^\gamma)+ \natural(\epsilon \circ \alpha \circ l)  \\
&=\natural(\mu^\gamma) \\
&=\xi
\end{align*}
since $\alpha$ is injective and therefore $\natural \circ \epsilon$ carries $\alpha(K)$ into $Inn(K)$ and leads to zero.
\end{proof}

When a split extension of algebra is given, there are possibly many choices of $\gamma$ and they defines, in a one-to-one fashion, different coverings for $\xi$. By proceeding lemma, $\gamma$ thus defines a hindrance $h^\gamma$. As $Obs(\xi)$=$f(\mu^\gamma, h^\gamma)$, we conclude that $\gamma$ actually determines the obstruction class.

Coupling induced by some extensions is called \textbf{special}, due to Hochschild. 

\begin{lem}
(Necessity)\quad For every $\gamma$ derived from an extension of algebras, $f(\mu^\gamma, h^\gamma)=0$.
\end{lem}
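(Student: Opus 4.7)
My plan is to evaluate $f(\mu^\gamma, h^\gamma)$ directly via its defining formula and show the result is zero in $B$ by associativity, then conclude via injectivity of $\alpha$. Since $f$ takes values in $AnniK \subset K$, it suffices to show that $\alpha\!\circ\! f(\mu^\gamma, h^\gamma) = 0$ in $B$.

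First I would unpack $\mu^\gamma$ and $h^\gamma$ through $\alpha$. By construction, $\alpha(u_{a}^\gamma(k)) = \gamma(a)\cdot\alpha(k)$ and $\alpha(v_a^\gamma(k)) = \alpha(k)\cdot \gamma(a)$, while $\alpha\circ h^\gamma(a_1\otimes a_2) = \gamma(a_1)\gamma(a_2) - \gamma(a_1 a_2)$ (viewing the image as lying in $\alpha(K)\subset B$, which is legitimate because $\beta$ is a homomorphism and $\beta\gamma = \mathrm{id}_A$, so $\gamma(a_1)\gamma(a_2) - \gamma(a_1 a_2)\in \ker\beta = \alpha(K)$). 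Applying $\alpha$ to the formula
\[
f(\mu^\gamma,h^\gamma)(a_1\otimes a_2\otimes a_3)=u^\gamma_{a_1}h^\gamma(a_2\otimes a_3)-h^\gamma(a_1a_2\otimes a_3)+h^\gamma(a_1\otimes a_2a_3)-v^\gamma_{a_3}h^\gamma(a_1\otimes a_2)
\]
then yields an expression entirely inside $B$.

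The next step is a straightforward algebraic cancellation: expanding each of the four terms gives
\begin{align*}
&\gamma(a_1)\bigl(\gamma(a_2)\gamma(a_3)-\gamma(a_2a_3)\bigr) - \bigl(\gamma(a_1a_2)\gamma(a_3)-\gamma(a_1a_2a_3)\bigr) \\
&\quad +\bigl(\gamma(a_1)\gamma(a_2a_3)-\gamma(a_1a_2a_3)\bigr) - \bigl(\gamma(a_1)\gamma(a_2)-\gamma(a_1a_2)\bigr)\gamma(a_3).
\end{align*}
By associativity of the multiplication in $B$, the two triple-products $\gamma(a_1)(\gamma(a_2)\gamma(a_3))$ and $(\gamma(a_1)\gamma(a_2))\gamma(a_3)$ cancel, the two $\pm\gamma(a_1a_2a_3)$ cancel, and likewise the mixed terms $\pm\gamma(a_1)\gamma(a_2a_3)$ and $\pm\gamma(a_1a_2)\gamma(a_3)$ cancel in pairs. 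Hence $\alpha\!\circ\! f(\mu^\gamma,h^\gamma)(a_1\otimes a_2\otimes a_3)=0$.

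Finally, since $\alpha$ is a monomorphism, this forces $f(\mu^\gamma,h^\gamma)(a_1\otimes a_2\otimes a_3)=0$ for all $a_1,a_2,a_3\in A$, so $f(\mu^\gamma,h^\gamma)=0$ as desired. There is really no obstacle here: the whole point of the 3-cocycle construction is to package the associator of a lifted multiplication, and when the lift already comes from an associative multiplication in $B$, the associator is identically zero. The only care needed is the bookkeeping to make sure the terms produced by $u^\gamma$, $v^\gamma$, and the multiplication in $h^\gamma$ are transported through $\alpha$ consistently; once that is in place, the result is immediate.
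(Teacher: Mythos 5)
Your proposal is correct and follows essentially the same route as the paper: transport the four terms of $f(\mu^\gamma,h^\gamma)$ into $B$ via $\alpha$ (so $u^\gamma_{a}$, $v^\gamma_{a}$ become left/right multiplication by $\gamma(a)$ and $h^\gamma$ becomes $\gamma(a_1)\gamma(a_2)-\gamma(a_1a_2)$), and cancel everything by associativity of $B$. Your added remark that one concludes in $K$ by injectivity of $\alpha$ is a small tidying of the paper's computation, not a different argument.
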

\begin{proof}
\begin{align*}
f(\mu^\gamma, h^\gamma)&=\Delta^\alpha(\mu^\gamma) h^\gamma(a_1\otimes a_2\otimes a_3)\in B  \\
&=u_{a_1}^\gamma h^\gamma(a_2\otimes a_3)-h^\gamma(a_1 a_2\otimes a_3)+h^\gamma(a_1\otimes a_2 a_3)-v_{a_3}^\gamma h^\gamma(a_1\otimes a_2)    \\
&=\alpha(u_{a_1}^\gamma)\gamma(a_2)\gamma(a_3)-\alpha(u_{a_1}^\gamma)\gamma(a_2 a_3)-\gamma(a_1 a_2)\gamma(a_3)     \\
&+\gamma(a_1 a_2 a_3)+\gamma(a_1)\gamma(a_2 a_3)-\gamma(a_1 a_2 a_3)-\alpha(v_{a_3}^\gamma)\gamma(a_1)\gamma(a_2)+\alpha(v_{a_3}^\gamma)\gamma(a_1 a_2)  
\end{align*}

Since $\alpha(u_{a_1}^\gamma)(\cdot)=\gamma(a_1)\cdot_\mu (\cdot)$ and $\alpha(v_{a_3}^\gamma)=(\cdot) \cdot_\mu \gamma(a_1)$, then
\begin{align*}
f(\mu^\gamma,h^\gamma)&=\gamma(a_1)\gamma(a_2)\gamma(a_3)-\gamma(a_1)\gamma(a_2 a_3)-\gamma(a_1 a_2)\gamma(a_3)   \\
&+\gamma(a_1)\gamma(a_2 a_3)-\gamma(a_1)\gamma(a_2)\gamma(a_3)+\gamma(a_1 a_2)\gamma(a_3)  \\
&=0 
\end{align*}
\end{proof}

\begin{thm}
An $A$-kernel $(K, \xi)$ is extendible if and only if $Obs(\xi)=0$. 
\end{thm}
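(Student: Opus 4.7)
The forward direction is essentially packaged in the preceding lemma together with the independence theorem of the previous section. Given an extension $(B,\beta)$, any linear section $\gamma$ produces a covering $\mu^\gamma$ and a hindrance $h^\gamma = R^\gamma$ with $f(\mu^\gamma, h^\gamma) = 0$, so the obstruction class $Obs(\xi) = \{f(\mu^\gamma, h^\gamma)\}$ vanishes.

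For the converse the plan is first to normalize the cocycle data. Assume $Obs(\xi) = 0$ and pick any bimultiplication law $\mu$ covering $\xi$ together with any hindrance $h$ of $R^\mu$. Vanishing of the class supplies $g \in C^2(A, \mathrm{Anni}K)$ with $f(\mu,h) = \delta^\xi g$. I would replace $h$ by $h' := h - i \circ g$: since $\epsilon \circ i = 0$, we still have $\epsilon \circ h' = R^\mu$, so $h'$ remains a hindrance for the same $\mu$; since $\Delta^\mu$ reduces to $\delta^\xi$ on cochains valued in $\mathrm{Anni}K$, we get $f(\mu, h') = f(\mu, h) - \delta^\xi g = 0$. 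We may therefore assume from the outset that $f(\mu, h) = 0$.

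With this normalization in hand, the next step is to build the extension by the classical Schreier-style formula. Let $B := A \oplus K$ as a vector space, equipped with the product
\[
(a_1, k_1)(a_2, k_2) := \bigl(a_1 a_2,\ h(a_1 \otimes a_2) + u_{a_1}(k_2) + v_{a_2}(k_1) + k_1 k_2\bigr),
\]
set $\alpha(k) := (0, k)$, $\beta(a, k) := a$, and take $\gamma(a) := (a, 0)$ as a linear section of $\beta$. Then $\beta$ is plainly an algebra epimorphism with $\ker\beta = \alpha(K)$, and $\alpha$ is an algebra monomorphism onto an ideal, since the product on $(0, K) \times (0, K)$ carries no $h$-contribution. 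Direct computation gives $(a,0)(0,k) = (0, u_a(k))$ and $(0,k)(a,0) = (0, v_a(k))$, whence $\mu^\gamma(a) = \mu(a)$, and the induced coupling is $\natural \circ \mu = \xi$ as required.

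The main obstacle is verifying associativity of this product on $B$. Expanding $((a_1, k_1)(a_2, k_2))(a_3, k_3)$ and $(a_1, k_1)((a_2, k_2)(a_3, k_3))$ and taking the difference in the $K$-coordinate, the terms fall into four groups: the purely $h$-valued piece collects exactly into $f(\mu, h)(a_1 \otimes a_2 \otimes a_3)$, which vanishes by our normalization; the pieces linear in a single $k_i$ cancel using identities (3.1) and (3.2) (and their right-action analogues); the pure $K$-piece cancels by associativity of $K$; and the mixed cross-products $u_{a_1} v_{a_2}$ versus $v_{a_2} u_{a_1}$ cancel by permutability, i.e. regularity of $\mu$. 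This is an intricate but mechanical bookkeeping exercise; once it is completed, $(B, \beta)$ furnishes the desired split extension realizing $(K, \xi)$.
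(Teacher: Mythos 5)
Your proposal is correct and follows essentially the same route as the paper's Appendix C: the forward direction is the necessity lemma for $f(\mu^\gamma,h^\gamma)$, and the converse is the same Schreier-style product on $A\oplus K$ with section $\gamma(a)=(a,0)$, whose associativity reduces to $f(\mu,h)=0$ via (3.1), (3.2), regularity, and associativity of $K$. Your explicit normalization step (replacing $h$ by $h-i\circ g$ so that the representative cocycle vanishes identically, justified by the last lemma of Section 4) is a welcome addition, since the paper's sufficiency lemma simply assumes a hindrance with $f(\mu,R)=0$ is given.
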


\begin{lem}
(Sufficiency)\quad Given $A$, $K$ and $\xi$. Let $\mu$ be a law that covers $\xi$ and there is a bilinear map $R: A\otimes A\rightarrow K$ being the lift of $\mu$ such that $f(\mu, R)=0$, then \\
1) The algebras $K$ and $A$ form an extension $A'$ such that $A'=K \rtimes_{\mu} A$, \\
2) For this extension we can find a linear mapping $\gamma$ making it split such that $\mu^\gamma=\mu$ and $h^\gamma=R$.
\end{lem}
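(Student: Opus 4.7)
The plan is to build $A'$ as the classical crossed/semidirect product $K \rtimes_\mu A$ twisted by the hindrance $R$. As a vector space, set $A':=K\oplus A$, and define the multiplication
\[
(k_1,a_1)\cdot(k_2,a_2):=\bigl(k_1k_2+u_{a_1}(k_2)+v_{a_2}(k_1)+R(a_1\otimes a_2),\ a_1a_2\bigr).
\]
Let $\alpha:K\to A'$, $k\mapsto(k,0)$, $\beta:A'\to A$, $(k,a)\mapsto a$, and $\gamma:A\to A'$, $a\mapsto(0,a)$. Then $\alpha$ is $\mathbb{F}$-linear injective, $\beta$ is linear surjective, $\ker\beta=\alpha(K)$, and $\beta\gamma=\mathrm{id}_A$ by construction; so once the multiplication is shown to be associative, $(A',\alpha,\beta,\gamma)$ is a split extension realising $K\rtimes_\mu A$.

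The central step is the verification of associativity, where the condition $f(\mu,R)=0$ will appear \emph{exactly} as the obstruction. Expanding both sides of
\[
\bigl((k_1,a_1)(k_2,a_2)\bigr)(k_3,a_3)\ \text{vs.}\ (k_1,a_1)\bigl((k_2,a_2)(k_3,a_3)\bigr),
\]
the second coordinates agree by associativity in $A$. For the first coordinates, I would systematically apply: (i) the bimultiplication axioms $u_a(k_1k_2)=u_a(k_1)k_2$, $v_a(k_1k_2)=k_1v_a(k_2)$ and $k_1u_a(k_2)=v_a(k_1)k_2$; (ii) the regularity (permutability) of $\mu$, which gives $v_{a_3}u_{a_1}=u_{a_1}v_{a_3}$; and (iii) the curvature identities (3.1)--(3.2), namely $u_{a_1}u_{a_2}(k)-u_{a_1a_2}(k)=R(a_1,a_2)\,k$ and $v_{a_3}v_{a_2}(k)-v_{a_2a_3}(k)=k\,R(a_2,a_3)$. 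After these cancellations, the discrepancy between the two first coordinates collapses to the single term
\[
u_{a_1}R(a_2,a_3)-R(a_1a_2,a_3)+R(a_1,a_2a_3)-v_{a_3}R(a_1,a_2)=(\Delta^\mu R)(a_1\otimes a_2\otimes a_3)=f(\mu,R)(a_1\otimes a_2\otimes a_3),
\]
which is zero by hypothesis. This is the one place the vanishing of the 3-cocycle is used.

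Finally, for part 2) I compute directly in $A'$:
\[
\gamma(a)\cdot\alpha(k)=(0,a)(k,0)=(u_a(k),0)=\alpha(u_a(k)),\quad \alpha(k)\cdot\gamma(a)=(v_a(k),0)=\alpha(v_a(k)),
\]
(using bilinearity of $R$ so that $R(a,0)=R(0,a)=0$), so under $\alpha^{-1}$ the law produced by $\gamma$ equals $\mu_a=(u_a,v_a)$, i.e.\ $\mu^\gamma=\mu$. Likewise
\[
\gamma(a_1)\gamma(a_2)-\gamma(a_1a_2)=(R(a_1\otimes a_2),\ a_1a_2)-(0,\ a_1a_2)=\alpha\bigl(R(a_1\otimes a_2)\bigr),
\]
so $h^\gamma=R$. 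The hardest step is unambiguously the associativity calculation in the previous paragraph; everything else is formal. (A minor technical point: to make $A'$ unital with $1_{A'}=(0,1_A)$ one either normalises $R$ so that $R(1,a)=R(a,1)=0$, which can always be arranged by modifying $\mu$ by an inner bimultiplication, or passes through the non-unital extension and unitizes; neither affects the cohomology class of $f$.)
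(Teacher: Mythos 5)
Your construction is exactly the paper's: the twisted semidirect product $A'=K\oplus A$ with multiplication $(k_1,a_1)(k_2,a_2)=(k_1k_2+a_1\cdot k_2+k_1\cdot a_2+R(a_1\otimes a_2),\,a_1a_2)$, associativity reduced via the bimultiplication axioms, regularity, and identities (3.1)--(3.2) to the vanishing of $f(\mu,R)=\Delta^\mu R$, followed by the same splitting $\gamma(a)=(0,a)$ giving $\mu^\gamma=\mu$ and $h^\gamma=R$. The plan is correct and matches the paper's proof in all essential steps (your explicit check of $h^\gamma=R$ and the remark on unitality are harmless additions).
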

\begin{proof}
Let the underlying vector space of $A'$ be the direct sum of the underlying vector space of $A$ and $K$. The multiplication on $A'$ is defined by 
\[
(a_1,k_1 )(a_2,k_2 )=(a_1 a_2, \ a_1\cdot k_2+k_1 \cdot a_2+k_1 k_2+h(a_1 \otimes a_2))
\]
Now let us compute
\[
\big((a_1,k_1)(a_2,k_2)\big)(a_3,k_3)=\big(a_1 a_2, \  a_1k_2+k_1\cdot a_2+k_1 k_2+h(a_1\otimes a_2)\big)(a_3,k_3)
\]
The first coordinate is $a_1 a_2 a_3$, and the second one is
\begin{multline*}
(a_1 a_2)\cdot k_3 +\big(a_1\cdot k_2+k_1\cdot a_2+k_1 k_2+h(a_1\otimes a_2)\big)\cdot a_3    \\
+\big(a_1\cdot k_2+k_1\cdot a_2+(k_1 k_2)\cdot a_3+h(a_1\otimes a_2)\big) k_3+h(a_1\otimes a_2 \otimes a_3)  
\end{multline*}
\begin{multline*}
=(a_1 a_2)\cdot k_3+(a_1\cdot k_2)\cdot a_3+(k_1\cdot a_2)\cdot a_3+(k_1 k_2) \cdot a_3+h(a_1\otimes a_2)\cdot a_3   \\
+(a_1\cdot k_2)\cdot k_3+(k_1\cdot a_2)\cdot k_3+k_1 k_2 k_3+h(a_1\otimes a_2) k_3+h(a_1\otimes a_2 \otimes a_3) 
\end{multline*}
On the other hand, we compute 
\[
(a_1,k_1)\big((a_2,k_2 )(a_3,k_3)\big)=(a_1,k_1)\big(a_2 a_3,a_2\cdot k_3+k_2\cdot a_3+k_2 k_3+h(a_2 \otimes a_3)\big)  
\]
Again, the first coordinate is $a_1 a_2 a_3$ and the second one is
\begin{multline*}
a_1 \cdot \big(a_2\cdot k_3+k_2\cdot a_3+k_2 k_3+h(a_2 \otimes a_3)\big)+k_1\cdot (a_2 a_3)  \\
+k_1\cdot \big(a_2\cdot k_3+k_2\cdot a_3+k_2 k_3+h(a_2 \otimes a_3)\big)+h(a_1 \otimes a_2 \otimes a_3) 
\end{multline*}
\begin{multline*}
=a_1\cdot (a_2\cdot k_3)+a_1\cdot (k_2\cdot a_3)+a_1\cdot (k_2 k_3 )+a_1\cdot h(a_2 \otimes a_3)  \\
+k_1\cdot (a_2 a_3)+k_1\cdot (a_2\cdot k_3)+k_1\cdot (k_2\cdot a_3)+k_1 k_2 k_3+k_1 h(a_2 \otimes a_3)+h(a_1 \otimes a_2 \otimes a_3)  
\end{multline*}
By the identities $(3.1)$ and $(3.2)$, we have
\begin{align*}
a_1\cdot (a_2\cdot k_3)&=(a_1 a_2 )\cdot k_3+h(a_1,a_2) k_3  \\
(k_1\cdot a_2 )\cdot a_3&=k_1\cdot (a_2 a_3 )+k_1 h(a_2\otimes a_3)
\end{align*}
By the regularity of $\mu$, we have
\[
(a_1\cdot k_2)\cdot a_3=a_1 \cdot (k_2\cdot a_3)
\]
By the definition of $Mul(K)$, we have
\begin{align*}
v(k_1 k_2)=(k_1 k_2)\cdot a_3&=k_1 (k_2\cdot a_3)=k_1 v(k_2) \\
u(k_2 k_3)=a_1\cdot (k_2 k_3)&=(a_1\cdot k_2 ) k_3=u(k_1) k_2 \\ 
k_1 u(k_3)=k_1\cdot (a_2\cdot k_3)&=(k_1\cdot a_2)\cdot k_3=v(k_1) k_3
\end{align*}
All of them, together with the term $k_1 k_2 k_3$, are identical. The remaining part is
\[
h(a_1 \otimes a_2)\cdot a_3+h(a_1\otimes a_2 \otimes a_3)=a_1\cdot h(a_2\otimes a_3)+h(a_1\otimes a_2\otimes a_3)
\]
This implies $(\delta h)(a_1\otimes a_2\otimes a_3)=f(a_1\otimes a_2\otimes a_3)=0$ as desired. Thus, $A'$ becomes an associative algebra.
Lastly, we identify the subalgebra $(0,K)$ with $K$, and define the homomorphism $\beta \big((a,k)\big)=a$. Let its inverse be 
\[
\gamma(a)=(a,0)
\]
Indeed, $\beta \gamma=id_A$ and $\gamma(a)$ produces the bimultiplication $\mu_a$. Specifically,
\begin{align*}
\alpha \big(\mu_a^{\gamma} (k) \big)&=\big(\gamma(a)\cdot \alpha(k), \alpha(k)\cdot \gamma(a) \big) \\
&=\big((a,0),(0,k),(0,k)(a,0) \big) \\
&=\Big(\big(0,a\cdot k+h(a\otimes 0)\big),\big(0,k\cdot a+h(0\otimes a)\big)\Big) \\
&=(a\cdot k, k\cdot a) \\
&=\mu_a  
\end{align*}
which is identical to $\mu$.
Therefore, we have shown that a coupling having trivial obstruction class determines an extension of algebra.
\end{proof}


\section*{Appendix D: Proof of Theorem 3}
\addcontentsline{toc}{section}{Appendix D: Proof of  Theorem 3}

\begin{proof}
We start with the triple $(A, M, f)$ and build a pair $(K, \xi)$ together with some proper $\mu$ and $h$ such that the following equalities hold:
\begin{align*}
&AnniK=M, \quad \xi=\natural \circ \mu, \quad \epsilon \circ h=R^\mu  \\
F^\xi&=F(\mu, h)=\Delta^\mu h  \\
     &\equiv f
\end{align*}
We define
\[
K=M \oplus L
\]
where
\[  
L:=J \oplus A\otimes A \otimes A^*
\]
such that $J \triangleleft L$ and $A^*=A\oplus 1$ in the underlying vector space 
\[ 
J:=C \oplus I 
\]
\begin{align*}
&C:=\mathbb{F}e\oplus \mathbb{F}f=E \oplus F \\
&I:=E\otimes A' \oplus E\otimes A' \otimes A' \oplus E \otimes A' \otimes A' \otimes A'
\end{align*}

On $C$ the multiplication is defined by 
\[ 
e^2=e,\quad f^2=f,\quad ef=f,\quad fe=e 
\] 
On $I$ and $IC$ the multiplication is defined by
\[
II=0, \quad IC=0
\]
On $CI$ the multiplication is defined by
\[
ev=v=fv \quad \text{for all} v\in I
\]
Because of $J=I \oplus C$ we have in total
\[
IJ=0
\]
On $(C\oplus I)(A\otimes A\otimes A^*)=J(A\otimes A\otimes A^*)$, we trivialize some of the components:
\[
(F \oplus E\otimes A'\otimes A' \oplus E\otimes A'\otimes A'\otimes A')(A\otimes A \otimes A^*)=0,
\]
and concretize the rest ones, $(E \oplus E\otimes A')(A\otimes A \otimes A^*)$, by claiming multiplications between basis: 
\begin{align*}  
e(a_1\otimes a_2 \otimes1)&=e\otimes a_1\otimes a_2  \\
& \in E\otimes A'\otimes A',  \\
e(a_1\otimes a_2\otimes a_3)&=e\otimes a_1\otimes a_2a_3-e\otimes a_1a_2\otimes a_3+e\otimes a_1\otimes a_2\otimes a_3   \\
&   \in E\otimes A'\otimes A'\oplus E\otimes A'\otimes A'\otimes A',   \\
(e\otimes a_1)(a_2\otimes a_3\otimes1)&=e\otimes a_1\otimes a_2\otimes a_3  \\
&   \in E\otimes A'\otimes A'\otimes A',  \\
(e\otimes a_1)(a_2\otimes a_3\otimes a_4)&=e\otimes a_1\otimes a_2\otimes a_3a_4-e\otimes a_1a_2\otimes a_3\otimes a_4+e\otimes a_1a_2\otimes a_3\otimes a_4     \\
& \in E\otimes A'\otimes A'\otimes A'.
\end{align*}

We require $MK=KM=0$ and thus $M$ is our biannihilator of $K$

We close the last unspecified product by assgining   
\[
(A\otimes A \otimes A^*)L=0 
\]

It can rest assured that these would exhaust all possible multiplications between the components of $K$. Next, we shall define the $\mu$-endomorphisms for $K$ compatible with the conditions for $Mul(K)$. Notice that the most influencing $A$-actions on $K$ stand on $E=A\otimes A\otimes A^* \oplus M$.

For all elements $(p, m)\in E$ we define the two-sided actions by
\begin{align*}
a\cdot \big(a_1\otimes a_2\otimes a_3^{\ast}, m \big) &= \big(aa_1\otimes a_2\otimes a_3^{\ast}-a\otimes a_1a_2\otimes a_3^{\ast} +a\otimes a_1\otimes a_2 a_3^{\ast}, \\ 
& \qquad f(a\otimes a_1\otimes a_2) \cdot a_3^{\ast}+a\cdot m \big),  \\
\big(a_1\otimes a_2\otimes a_3^{\ast}, m \big)\cdot a &=\big( a_1\otimes a_2\otimes a_3^{\ast} a, m\cdot a \big)
\end{align*}

Note that by definition $f(a\otimes a_1\otimes a_2) \cdot 1\in N$. On the subspace $J$ we set the left action on it by
\[
a\cdot J=0 \quad \text{or} \quad A\cdot J=0,
\]
and all right actions on it by
\begin{align*}
   e\cdot a &=e\otimes a\in E\otimes A', \\
	 f\cdot a &=0, \\
   (e\otimes a_1)\cdot a &= e\otimes a_1a+e\otimes a_1\otimes a\in (E\otimes A')\oplus(E\otimes A' \otimes A'),     \\
   (e\otimes a_1\otimes a_2)\cdot a &=e\otimes a_1\otimes a_2a-e\otimes a_1a_2\otimes a+e\otimes a_1\otimes a_2\otimes a \\
&\in(E\otimes A' \otimes A')\oplus (E\otimes A' \otimes A' \otimes A'),     \\
   (e\otimes a_1\otimes a_2\otimes a_3)\cdot a &= e\otimes a_1\otimes a_2\otimes a_3a-e\otimes a_1\otimes a_2a_3\otimes a+e\otimes a_1a_2\otimes a_3\otimes a  \\
&\in E\otimes A' \otimes A' \otimes A'
\end{align*}

Indeed, the following conditions should hold under above $K$-multiplications and $A$-actions:

\begin{align*} 
k_1(a\cdot k_2) &=(k_1\cdot a)k_2   \\
(a\cdot k_1)k_2 &=a\cdot(k_1k_2)    \\
k_1(k_2\cdot a) &=(k_1k_2)\cdot a   \\
\end{align*}
and permutability 
\[
a_1\cdot (k\cdot a_2)=(a_1\cdot k)\cdot a_2,
\]

It is easily to see that on the bimodule $M$ all identities hold immediately.

Take 
\[
\bar{h}(a_1 \otimes a_2):=a_1 \otimes a_2 \otimes 1
\]. Due to \textbf{Lemma 6.1} and \textbf{6.2}, the bimodule $M$ and $A\otimes A \otimes A*$ constitute an extension, from which such a $\bar{h}$  fulfills $\epsilon \circ \bar{h} =R^\mu$ and $\delta_{M\oplus A\otimes A \otimes A*}\bar{h}$ coincides with $f$ inevitably.

Recall that
\begin{align*}
\epsilon \circ h(a_1 \otimes a_2) (k)&=R^\mu(a_1 \otimes a_2)(k) \\
(u_{h(a_1 \otimes a_2)}, v_{h(a_1 \otimes a_2)})(k)&=\mu(a_1)\mu(a_2)-\mu(a_1 a_2)(k),
\end{align*}
Equivalently,  
\begin{align*}
a_1\cdot(a_2\cdot k)-a_1 a_2\cdot k &=(a_1\otimes a_2\otimes 1)k \\  
(k\cdot a_1)a_2-k\cdot (a_1a_2) &=k(a_1\otimes a_2\otimes 1)
\end{align*}

We dirty our hand to plug in the listed terms and get
\[
(a_1\otimes a_2\otimes1)k=0,
\]
whence $a_1\cdot(a_2\cdot k)=a_1 a_2\cdot k$, showing that $K$ is (only) a left $A$-module(however \emph{not} a right one, as its structure is ``hindered" by $v_{h(a_1\otimes a_2)}$!).
\end{proof}

\section*{Appendix E: Proof of Theorem 4}
\addcontentsline{toc}{section}{Appendix E: Proof of Theorem 4}

As we have taken $U=U(\mathfrak{g})$ which plays the role of $A$ in previous theorem, we take again its underlying space $U'$ where $u' \mapsto u$ is a naive isomorphism. The tensor product of $U'$ are the product of vector space and we are able to assign some new multiplications. On the other hand, we shall find another expression of $h$ which then determines a modified structure of $K$ (especially $I\subset K$) and the table of $K$-multiplications and $A$-actions. 

\begin{proof}
In order to differ from $\otimes$, we will replace $\oplus$ by $+$ for visual convenience.
$$K:=M + L,$$ 
$$\text{where} \quad L=J + U\otimes U, $$ 
$$\text{where} \quad J=I+ C,$$
$$\text{where} \quad I=U' + U'\otimes U' $$
$$\text{where} \quad C=\mathbb{F}e+\mathbb{F}f $$

The list of $K$-multiplications:
\begin{align*}
&(U' \otimes U')L= (U \otimes U)L=0 \\
&e(u_1 \otimes u_2)=u_1' \otimes u_2', \quad \mathbb{F}e(U\otimes U)\subset U'\otimes U' \\
&f(u_1 \otimes u_2)=0, \quad \mathbb{F}f(U\otimes U)=0  \\
&u'(u_1 \otimes u_2)=(uu_1)'\otimes (u_2)'-u'\otimes (u_1 u_2)', \quad U'(U\otimes U)\subset U'\otimes U'
\end{align*}

The list of $A$-actions on $K$:
\begin{align*}
U\cdot J&=0 \\
U\cdot (U\otimes U+ M)&=\big(u u_1\otimes u_2-u\otimes u_1 u_2, f(u, u_1, u_2) \big) \\
(U'\otimes U' + U\otimes U + M)\cdot U&=0 \\
e\cdot u&=u', \qquad \mathbb{F}e\cdot U=U' \\
f\cdot u&=0, \qquad \mathbb{F}f\cdot U=0 \\
u_1'\cdot u&=(u_1 u)'+u_1'\otimes u', \qquad U'\cdot U \subset U'\oplus U'\otimes U'
\end{align*}
Note that $U\cdot M$ behaves what it does, and we have altogether $M\cdot U=0$ as prescribed before. 

Now we are going to check all four conditions. The given lists appear to be in huge computation, but it will not dirty our hand too much to justify them. Indeed, we concentrate on the most ``typical pieces" and conclude their trivialness from the left hand sides or the right hand sides, respectively. 

We show that $w\cdot(k_1 k_2)=(w\cdot k_1)\cdot k_2$ for $w\in U(\mathfrak{g})$.

1)If $k_1 \in J=C+I$, then from the list, we have
\begin{align*}
k_1k_2\in JK&=J(M+L)\\
&=JL \\
&=(e, f, U', \mathbf{U'\otimes U'})\mathbf{L}\\
&=(e,f, \mathbf{U'})(\mathbf{J}+U'\otimes U')(J+U\otimes U)  \\
&=(e,\mathbf{f})(J+\mathbf{U\otimes U}) \\
&=eJ+e(U\otimes U) \\
&=J+U'\otimes U' \\
&\subset J
\end{align*}
In other words, as long as $J \triangleleft L$ we have $JL\subset L$. Then in the left hand side it is
$$w\cdot(k_1 k_2)\subset U\cdot J=0,$$
and in the right hand sides:
$$(w\cdot k_1)\cdot k_2\subset (U\cdot J)\cdot k_2=0$$

2) If $k_1 \in U\otimes U+M$, the from the left we have 
$$k_1 k_2\subset (U\otimes U+M)K=(U\otimes U)(M+L)=0$$
due to the list, and from the right
\begin{align*}
(w\cdot k_1)&\subset U\cdot(U\otimes U+M) \\
&\subset U'\otimes U'
\end{align*}
Then
\begin{align*}
(U'\otimes U')\cdot k_2 & \\
&\subset (U'\otimes U')K \\
&=(U'\otimes U')L=0
\end{align*}

In this way, we have proved the equality $w\cdot(k_1 k_2)=(w\cdot k_1)\cdot k_2$. Similarly for the other two equalities.

3) Now we need to show that $(k\cdot w_1)w_2-k(w_1 w_2)=m(w_1 \otimes w_2)$ (this is for $\epsilon \circ h=R^\mu$!)

Let $k=\mathbf{u'_1}+u'_2\otimes u'_3 + \mathbf{e} + f + u_1 \otimes u_2 +m$. Here only the bold elements $(u'_1, e)\in U'+\mathbb{F}e$ are non-zero due to the list. That is, for any $w_1, w_2 \in U$, in the left we have 
\begin{align*}
&(k\cdot w_1)w_2-k(w_1 w_2)= \\
&=[(u'_1+e)\cdot w_1]\cdot w_2-(u'_1+e)(w_1 w_2) \\
&=[(u_1 w_1)'+u'_1\otimes w'_1+w'_1]\cdot w_2-[(u_1 w_1 w_2)'+u_1'\otimes (w_1 w_2)'+(w_1w_2)'] \\
&=[\mathbf{U'} + U'\otimes U' + \mathbf{U'}]\cdot U-\cdots  \\
&=(u_1w_1w_2)'+(u_1w_1)' \otimes w'_2+(w_1 w_2)'+w'_1 \otimes w'_2-(u_1 w_1 w_2)'-u'_1\otimes (w_1 w_2)'-(w_1 w_2)'  \\
&=(u_1w_1)' \otimes w'_2+w'_1 \otimes w'_2-u'_1\otimes (w_1 w_2)',
\end{align*}

while in the right:
\begin{align*}
&m(w_1 \otimes w_2)= \\
&=(u'_1+e)(w_1 \otimes w_2)  \\
&=(uw_1)'\otimes w_2-u'_1\otimes (w_1 \otimes w_2)'+w'_1\otimes w'_2
\end{align*}

Therefore, we get that $(k\cdot w_1)w_2-k(w_1 w_2)=m(w_1 \otimes w_2)$(meaning $K$ is \emph{not} an right $A$-module!). Similar computation for the left one, which claims a left module on $K$, also valid in \textbf{Appendix D})(Tips: always catch the nonzero parts and try to cancel them)

4) We show $AnniK=M$ or $AnniL=0$

Let $l=\alpha e+\beta f+p+i+j \in \mathbb{F}e+\mathbb{F}f+U\otimes U+U'+U'\otimes U'$. By contradiction, we suppose $ZL\neq 0$, then there exists a $l'\in L$ such that $ll'=0$ or $l'l=0$. It suffices to accommodate a $l'$ deviously.

Since the arbitrariness of $l'$, at first we set $l'\in U\otimes U$ and we multiply from the left by
$$(\alpha e+\beta f+p+i+j)l'\in l(U\otimes U)=0$$
Then from the list we have
$$(\alpha e+i)(u_1 \otimes u_2)=0$$
The linear independence of the basis implies
$$\alpha e+i=0 (*)$$ 

Next, let $l'=f$ and multiply from the right
$$f(\alpha e+\beta f+p+i+j)=0$$
We get
$$\alpha e+\beta+fv=\alpha e+\beta+v=0$$
Again,the linear independence of the basis implies
$$\alpha, \beta, v=0$$

Since (*) we have $i=0$, then immediately $v=i+j, j=0$

Finally, set $l'=e$ and multiply from the left  
$$(p+j)e=p\cdot e=0$$
It follows $e=0$. Therefore all constituents of $l'$ is zero and the claim is disproved, meaning the annihilator of $L$ is nihil.

\end{proof}

\bibliography{test_mod}
\bibliographystyle{plainurl} 

\end{document}